\numberwithin{equation}{section}
\newcommand{\SL}{\mathrm{SL}}
\DeclareFontFamily{OT1}{rsfs}{}
\DeclareFontShape{OT1}{rsfs}{n}{it}{<-> rsfs10}{}
\DeclareMathAlphabet{\mathscr}{OT1}{rsfs}{n}{it}
\theoremstyle{plain}
\newtheorem{theorem}{Theorem}[section]
\newtheorem{proposition}[theorem]{Proposition}
\newtheorem{lemma}[theorem]{Lemma}
\newtheorem{corollary}[theorem]{Corollary}
\theoremstyle{definition}
\newtheorem{remark}[theorem]{Remark}
\newtheorem{example}[theorem]{Example}
\newcommand\E{\mathbb{E}}
\newcommand\R{\mathbb{R}}
\newcommand\Z{\mathbb{Z}}
\newcommand\N{\mathbb{N}}
\newcommand\C{\mathbb{C}}
\newcommand\Q{\mathbb{Q}}
\newcommand\eps{\varepsilon}
\renewcommand\P{\mathbb{P}}
\begin{document}

\title{Inverse theorems for sets and measures of polynomial growth}

\author{Terence Tao}
\address{UCLA Department of Mathematics, Los Angeles, CA 90095-1555.}
\email{tao@math.ucla.edu}


\subjclass[2010]{11B30, 60G50}

\begin{abstract}  We give a structural description of the finite subsets $A$ of an arbitrary group $G$ which obey the polynomial growth condition $|A^n| \leq n^d |A|$ for some bounded $d$ and sufficiently large $n$, showing that such sets are controlled by (a bounded number of translates of) a coset nilprogression in a certain precise sense.  This description recovers some previous results of Breuillard-Green-Tao and Breuillard-Tointon concerning sets of polynomial growth; we are also able to describe the subsequent growth of $|A^m|$ fairly explicitly for $m \geq n$, at least when $A$ is a symmetric neighbourhood of the identity.  We also obtain an analogous description of symmetric probability measures $\mu$ whose $n$-fold convolutions $\mu^{*n}$ obey the condition $\| \mu^{*n} \|_{\ell^2}^{-2} \leq n^d \|\mu \|_{\ell^2}^{-2}$.  In the abelian case, this description recovers the inverse Littlewood-Offord theorem of Nguyen-Vu, and gives a ``symmetrised'' variant of a recent nonabelian inverse Littlewood-Offord theorem of Tiep-Vu.

Our main tool to establish these results is the inverse theorem of Breuillard, Green, and the author that describes the structure of approximate groups.
\end{abstract}

\maketitle


\section{Introduction}

In the field of arithmetic combinatorics, it has turned out to be particularly fruitful to establish \emph{inverse theorems} in which some combinatorial hypothesis on an object of arithmetic combinatorial nature (e.g. a finite subset of a group) is used to establish a much more explicit description of that object.  Ideally, these inverse theorems should be matched as closely as possible by a converse \emph{direct theorem} that shows that all objects of the given explicit description obey the original combinatorial hypothesis (perhaps with some loss in the quantitative constants).  A typical instance of this is Freiman's inverse sumset theorem, first established over the integers by Freiman \cite{freiman}, and in the general setting of abelian groups by Green and Ruzsa \cite{green-ruzsa}:

\begin{theorem}[Freiman's theorem]  Let $G = (G,+)$ be an arbitrary abelian group, and let $A$ be a finite non-empty subset of $G$ of cardinality $|A|$.  Suppose that the sumset $2A = A+A := \{ a_1 + a_2: a_1,a_2 \in A\}$ is such that $|A+A| \leq K |A|$ for some $K \geq 1$.  Then there exists a \emph{coset progression} $H+P$, where $H$ is a finite subgroup of $G$ and $P = P(v_1,\dots,v_r; N_1,\dots,N_r)$ is a \emph{generalised arithmetic progression}, that is to say a set of the form
$$ P = \{ n_1 v_1 + \dots + n_r v_r: n_1,\dots,n_r \in \Z; |n_i| \leq N_i \forall 1 \leq i \leq r \}$$
for some \emph{rank} $r>0$, some $N_1,\dots,N_r>0$ and $v_1,\dots,v_r \in G$, such that $A \subset H+P$, $|H+P| \leq C_K |A|$, and $r \leq C'_K$, where $C_K$, $C'_K$ are quantities depending only on $K$.
\end{theorem}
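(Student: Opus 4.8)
The plan is to follow the Ruzsa-style route to Freiman's theorem, in the form adapted to arbitrary abelian groups by Green and Ruzsa. The first step is to upgrade the hypothesis $|A+A| \leq K|A|$, which only controls a single sumset, to control on all iterated sum-difference sets. This is exactly what the Pl\"unnecke--Ruzsa inequality provides: one obtains $|kA - \ell A| \leq K^{k+\ell} |A|$ for all integers $k,\ell \geq 0$. In particular $|8A-8A| \leq K^{16}|A|$, and this is the robust notion of ``small doubling'' that will survive the Freiman isomorphisms used below. I would also record here the cheap converse (the ``direct theorem''): any coset progression $H+P$ with $P$ of rank $r$ satisfies $|2(H+P)| \leq 2^r |H+P|$, so the explicit description produced at the end genuinely matches the hypothesis up to constants.

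The second step is the \emph{modeling} step. By Ruzsa's covering and modeling lemmas, there is a subset $A' \subseteq A$ with $|A'| \gg_K |A|$ that is Freiman $8$-isomorphic to a subset $B$ of a finite abelian group $Z$ with $|Z| \ll_K |A|$; by quotienting by an appropriate finite subgroup one may arrange that $Z$ has a large ``nondegenerate'' part $\Z/N_1\Z \times \dots \times \Z/N_s\Z$ suitable for Fourier analysis (in the purely torsion-free regime one instead embeds a model of $B$ into $\Z^n$ via a generic linear map, which is the situation in Freiman's original argument over $\Z$). The third step is a Bogolyubov-type argument carried out by Fourier analysis on $Z$: the set $2B - 2B$ contains a Bohr set $\mathrm{Bohr}(S,\rho) = \{ x \in Z : \|\xi\cdot x\| \leq \rho \text{ for all }\xi \in S\}$ with a bounded number $|S| \ll_K 1$ of frequencies and radius $\rho \gg_K 1$, and of density $\gg_K 1$ in $Z$, hence of cardinality $\gg_K |A|$.

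The fourth step is the geometry of numbers: invoking Minkowski's second theorem, a Bohr set of rank $|S|$ contains a proper generalised arithmetic progression $Q$ of rank $r \ll_K 1$ with $|Q| \gg_K |\mathrm{Bohr}(S,\rho)| \gg_K |A|$. The fifth and final step is Ruzsa's covering lemma, applied twice. Since $Q \subseteq 2B - 2B$ with $|2B-2B| \ll_K |A| \ll_K |Q|$, and $B \subseteq 2B-2B$-cover by $\ll_K 1$ translates, one concludes that $B$, and hence (pulling back through the Freiman $8$-isomorphism) $A'$, and hence (covering $A$ by $\ll_K 1$ translates of $A'$, using $|A'-A'| \leq |2A-2A| \ll_K |A|$) $A$ itself, lies in $\ll_K 1$ translates of a GAP of bounded rank. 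Absorbing the finitely many translates into a single GAP by doubling its side-lengths and adjusting the base point, and then reinstating the torsion subgroup $H$ that was quotiented out during modeling, yields a coset progression $H+P$ with $A \subseteq H+P$, $\mathrm{rank}(P) \ll_K 1$, and $|H+P| \ll_K |A|$, as required.

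The main obstacle is the modeling/embedding step in the presence of arbitrary torsion: one must simultaneously retain a positive ($K$-dependent) proportion of $A$, land in a model group of size $O_K(|A|)$, \emph{and} ensure that model is amenable both to the Fourier-analytic Bogolyubov argument and to the geometry-of-numbers step, while keeping precise track of which part of the ambient structure becomes the subgroup $H$ and which becomes the progression $P$. Once a good model is in hand, the Bogolyubov and covering steps are essentially routine; it is also in the modeling and geometry-of-numbers steps that the quantitatively worst dependence of $C_K$ and $C'_K$ on $K$ is incurred.
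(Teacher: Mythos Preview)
The paper does not give its own proof of this theorem; it is stated as background and attributed to Freiman \cite{freiman} over the integers and to Green--Ruzsa \cite{green-ruzsa} for arbitrary abelian groups. Your outline follows the standard Green--Ruzsa route (Pl\"unnecke--Ruzsa, Ruzsa modeling lemma, Bogolyubov--Chang, geometry of numbers, Ruzsa covering), which is exactly the argument being cited, so there is nothing to compare against and your sketch is appropriate.
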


More explicit values for $C_K$ were given in \cite{green-ruzsa}; for the best known bounds on these quantities\footnote{For the sharpest quantitative results, it is preferable to replace the notion of a coset progression with the slightly more general notion of a \emph{convex coset progression}, and to cover $A$ by a bounded number of translates of $H+P$, rather than just by $H+P$; see \cite{sanders-survey} for more details.} see the survey \cite{sanders-survey} of Sanders.  However, we will not be concerned with quantitative values of constants in this paper.  The corresponding direct theorem is easy: if $H+P$ is a coset progression of some rank $r$ and $A$ is a subset of $H+P$ with $|A| \geq \eps |H+P|$, then it is easy to see that $|A+A| \leq C_{r,\eps} |A|$ for some quantity $C_{r,\eps}$ depending only on $r,\eps$.

A closely related variant of Freiman's theorem concerns iterated sumsets $nA = A + \dots + A$ of $A$ for large $n$, in the case that $nA$ enjoys relative polynomial growth in $n$:

\begin{theorem}[Polynomial growth inverse theorem]\label{pgit-thm}  Let $d \geq 1$, and let $n$ be sufficiently large depending on $d$ (thus $n \geq n_0(d)$ for some $n_0(d)$ depending on $d$). Let $A$ be a finite-non-empty subset of an abelian group $G$ such that $|nA| \leq n^d |A|$.  Then there exists a coset progression $H+P$ of rank at most $C_d$ such that $A \subset H+P$ and $|H+P| \leq C'_d |A|$, where $C_d$ and $C'_d$ depend only on $d$.
\end{theorem}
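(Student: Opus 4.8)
The plan is to notice that, once one unwinds the elementary inclusion of $A+A$ into a translate of $nA$, the hypothesis says nothing more than that $A$ has bounded doubling, and then to invoke Freiman's theorem directly. The only quantitative point is that $n$ is fixed and depends only on $d$, so the constant $n^d$ appearing below depends only on $d$ as well.

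First I would record the elementary fact that $|A+A| \le |nA|$. Since $n$ is large, certainly $n \ge 2$; fixing any $a \in A$ and taking the last $n-2$ summands in $nA = \underbrace{A + \dots + A}_{n}$ all equal to $a$ shows $(A+A) + \{(n-2)a\} \subseteq nA$, whence $|A+A| = |(A+A)+\{(n-2)a\}| \le |nA|$. Combining with the hypothesis gives $|A+A| \le n^d|A|$, so $A$ has doubling at most $K := n^d$, a quantity depending only on $d$. I would then apply Freiman's theorem (the first theorem of this section) to $A$ with this value of $K$: it provides a coset progression $H+P$ with $A \subseteq H+P$, with $P$ of rank at most a constant depending only on $K$ --- hence only on $d$ --- and with $|H+P|$ at most a $K$-dependent, hence $d$-dependent, constant times $|A|$. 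This is exactly the conclusion of Theorem~\ref{pgit-thm}, with $C_d$ and $C'_d$ the relevant Freiman constants evaluated at $K=n^d$; there is nothing further to do.

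So there is no real obstacle here: over abelian groups, Theorem~\ref{pgit-thm} \emph{is} Freiman's theorem in lightly disguised form, the polynomial-growth phrasing being a packaging that matches the hypotheses appearing in the growth literature (and one that will be reused in the nonabelian setting, there via the approximate-groups theorem in place of Freiman's). The one genuine subtlety is conceptual, lying in the quantification: the statement insists that $n$ be a fixed function of $d$, and this is essential. If $n$ were allowed to grow with $A$ the conclusion would fail --- for instance, with $d \ge 2$, an initial segment $\{0,1,4,\dots,(N-1)^2\}$ of the squares satisfies $|nA| \le n^d|A|$ as soon as $n \gtrsim N^{1/(d-1)}$ (using the trivial bound $|nA| \le n(N-1)^2+1$), yet its doubling tends to infinity as $N \to \infty$. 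With $n$ fixed, by contrast, the chain $|A+A| \le |nA| \le n^d|A|$ does everything at once: it bounds $|A+A|/|A|$, forces such spread-out sets to have bounded cardinality, and hands the remaining structural work to Freiman's theorem.
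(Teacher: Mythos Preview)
The paper does not give its own proof of this theorem; it is quoted as background and attributed to the literature (specifically \cite[Theorem 2.7]{sanders-survey}). So there is no argument in the paper to compare against directly.

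That said, there is a genuine gap in your argument, and it sits exactly where you believe the subtlety has been resolved. You read the hypothesis as fixing $n$ to be a specific value $n_0(d)$ determined by $d$; but the statement says $n \geq n_0(d)$, and the nonabelian analogues later in the paper (Theorems~\ref{exist} and~\ref{gpg}) make the intended quantification unambiguous: $n$ may be \emph{any} sufficiently large integer, and the constants $C_d, C'_d$ are required to be uniform in $n$. Under that reading your chain $|A+A| \leq |nA| \leq n^d|A|$ only yields doubling constant $K = n^d$, and the Freiman constants $C_K, C'_K$ you then invoke depend on $n$, not just on $d$. Your own squares example makes this vivid: for $A$ the first $N$ squares one has $|nA| \leq n^2|A|$ once $n \gtrsim N$, yet $|A+A|/|A| \to \infty$ with $N$ --- so no bound on $|A+A|$ uniform in $n$ is available, and the one-line reduction to Freiman cannot succeed. (Indeed, your example is sharp enough to cast doubt on the size bound $|H+P|\le C'_d|A|$ itself under the uniform-in-$n$ reading; the precise formulation in the cited reference should be consulted.)

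The standard route to $n$-independent constants is not through $|A+A|$ but through an intermediate scale: since the product of the $\sim\log_2 n$ ratios $|2^{j+1}A|/|2^jA|$ is at most $|nA|/|A|\le n^d$, pigeonhole gives some $m\in[1,n]$ with $|2mA|\le 2^{O(d)}|mA|$, and Freiman is applied to $mA$ rather than to $A$. This produces a coset progression of rank $O_d(1)$ containing $mA$ (hence a translate of $A$), with size $O_d(|mA|)$; relating this back to $|A|$ then requires additional work.
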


See for instance \cite[Theorem 2.7]{sanders-survey} for a quantitative version of this theorem with quite good values for the constants $n_0(d), C_d, C'_d$.  Again, the corresponding direct theorem is easy to establish: if $H+P$ has rank at most $r$ and $A \subset H+P$ with $|A| \geq \eps |H+P|$, then $|nA| \leq C_{r,\eps} n^r |A|$ for some $C_{r,\eps}$ depending only on $r,\eps$.

Another family of inverse theorems in arithmetic combinatorics are the \emph{inverse Littlewood-Offord theorems} that classify the tuples of elements $v_1,\dots,v_n$ of an additive group for which the random sum $\pm v_1 \pm \dots \pm v_n$ exhibits unusually large concentration; see \cite{nguyen-survey} for a survey.  A near-optimal such theorem in the case of torsion-free abelian groups is the following result of Nguyen and Vu \cite[Theorem 2.5]{nguyen}:

\begin{theorem}[Inverse Littlewood-Offord theorem]\label{ilot}  Let $v_1,\dots,v_n$ be elements of a torsion-free abelian group $G = (G,+)$, and define the concentration probability
$$ \rho := \sup_{x \in G} \P( \xi_1 v_1 + \dots + \xi_n v_n = x )$$
where $\xi_1,\dots,\xi_n \in \{-1,+1 \}$ are independent Bernoulli variables, each of which attains either sign $+1, -1$ with equal probability.  Suppose that $\rho \geq n^{-A}$ for some $A>0$.  Let $n' \in [n^\eps,n]$ for some $\eps>0$.  Then there exists an arithmetic progression $P$ of rank $r$ at most $C_{\eps,A}$ that contains all but at most $n'$ of the $v_1,\dots,v_n$, such that
$$ |P| \leq C_{\eps,A} \rho^{-1} (n')^{-r/2},$$
where $C_{\eps,A}$ depends only on $\eps$ and $A$.
\end{theorem}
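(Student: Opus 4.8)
The plan is to deduce this from the inverse theorem for symmetric probability measures of polynomial growth established elsewhere in this paper (the probability-measure analogue of Theorem~\ref{pgit-thm}). The point is that although that theorem does not apply directly to the random sum $X := \xi_1 v_1 + \dots + \xi_n v_n$, it does apply to a ``with replacement'' symmetrisation of $X$. We may assume $n$ is sufficiently large (the case of bounded $n$ being routine, since all relevant quantities are then bounded in terms of $\eps$ and $A$), and we may assume $G = \Z^s$, since all the randomness is supported in the subgroup generated by $v_1, \dots, v_n$, which is finitely generated and torsion-free. Now introduce the symmetric probability measure
$$ \mu := \frac{1}{2n}\sum_{i=1}^n \big(\delta_{v_i} + \delta_{-v_i}\big), $$
so that $\mu^{*k}$ is the law of a sum of $k$ independent uniform draws from $v_1,\dots,v_n,-v_1,\dots,-v_n$, and $\|\mu\|_{\ell^2}^{-2} = \mu^{*2}(0)^{-1} \ge 1$. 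The crux is to show that the hypothesis $\rho \ge n^{-A}$ forces
$$ \|\mu^{*n}\|_{\ell^2}^{-2} \ \le\ n^{O_A(1)}\,\|\mu\|_{\ell^2}^{-2}, $$
so that $\mu$ is a symmetric probability measure of polynomial growth.

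To prove this estimate, Cauchy--Schwarz and independence give
$$ \rho^2\ \le\ \sum_x \P(X = x)^2\ =\ \P(X = X')\ =\ \P\Big(\sum_{i=1}^n \zeta_i v_i = 0\Big), $$
where $X'$ is an independent copy of $X$ and $\zeta_i := (\xi_i - \xi_i')/2$ are i.i.d.\ taking the values $0,+1,-1$ with probabilities $\tfrac12,\tfrac14,\tfrac14$ (here torsion-freeness is used to pass from $2\sum\zeta_i v_i = 0$ to $\sum\zeta_i v_i = 0$). Fourier inversion on the dual group $(\R/\Z)^s$ rewrites the right-hand side as $\int \prod_{i=1}^n \cos^2(\pi\langle t, v_i\rangle)\,dt$, and similarly $\|\mu^{*n}\|_{\ell^2}^2 = \mu^{*2n}(0) = \int \hat\mu(t)^{2n}\,dt$ with $\hat\mu(t) = \tfrac1n\sum_{i=1}^n \cos(2\pi\langle t, v_i\rangle) \in [-1,1]$. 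Since the first integrand takes values in $[0,1]$ and integrates to at least $\rho^2 \ge n^{-2A}$, the set $E$ on which it exceeds $\tfrac12\rho^2$ has Haar measure at least $\tfrac12 n^{-2A}$; on $E$ all the factors $\cos^2(\pi\langle t, v_i\rangle)$ are positive, so the elementary inequality $\log\cos^2\theta \le -\sin^2\theta$ yields $\sum_{i=1}^n \sin^2(\pi\langle t, v_i\rangle) \le \log(2\rho^{-2}) = O_A(\log n)$, hence
$$ \hat\mu(t)\ =\ 1 - \frac{2}{n}\sum_{i=1}^n \sin^2(\pi\langle t, v_i\rangle)\ \ge\ 1 - O_A\Big(\frac{\log n}{n}\Big), $$
and therefore $\hat\mu(t)^{2n} \ge n^{-O_A(1)}$ for $t \in E$. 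Since $\hat\mu^{2n} \ge 0$ everywhere, integrating over $E$ gives $\|\mu^{*n}\|_{\ell^2}^2 = \mu^{*2n}(0) \ge \tfrac12 n^{-2A}\cdot n^{-O_A(1)} = n^{-O_A(1)}$, which is the claimed estimate because $\|\mu\|_{\ell^2}^{-2} \ge 1$.

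Applying the measure inverse theorem to $\mu$ — with torsion-freeness of $G$ forcing the ``coset'' part to be trivial — produces a progression $P_0$ of rank $r \le C_A$ that controls $\mu$; in particular a bounded number $O_A(1)$ of translates of $P_0$ contain at least a proportion $c_A$ of the $2n$ elements $\pm v_1,\dots,\pm v_n$. Two further steps then complete the proof. In the \emph{upgrade} step one promotes ``a positive proportion'' to ``all but $n'$'': a vector lying outside every bounded dilate of $P_0$ behaves, relative to the structure already found, like a member of a dissociated family, and $k$ such vectors would force $\rho \le 2^{-\Omega(k)}$; since $\rho \ge n^{-A}$ their number is at most $O_A(\log n) \le n^\eps \le n'$, so discarding them leaves $\ge n - n' \ge n/2$ of the $v_i$ all lying in a single progression $P$ of rank $r \le C_{\eps,A}$ (the bounded family of translates of $P_0$ being absorbed into $P$), which one may moreover take \emph{efficient}: the retained $v_i$ span each of the $r$ directions of $P$ with frequency $\gg n$. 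In the \emph{sharp size} step one writes $X = X_{\mathrm{in}} + X_{\mathrm{out}}$ according to whether $v_i \in P$; marginalising out $X_{\mathrm{out}}$ gives $\rho \le \sup_w \P(X_{\mathrm{in}} = w)$, and a local central limit theorem for the lattice random walk $X_{\mathrm{in}}$ — applicable precisely because $P$ was chosen efficient — bounds the right-hand side by $O_r(|P|^{-1} n^{-r/2})$; rearranging and using $n' \le n$ gives $|P| \le C_{\eps,A}\,\rho^{-1}(n')^{-r/2}$, as required.

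The step I expect to be the main obstacle is the upgrade step. The measure inverse theorem only controls a positive proportion of the $v_i$, and converting this into control of \emph{all but $n'$} of them by a \emph{single} progression of \emph{bounded} rank — rather than by a union of up to $\sim\log n$ progressions whose combined rank is unbounded — requires using quantitatively that residual unstructured vectors suppress $\rho$ sub-exponentially, and it has to be carried out in tandem with choosing the correct dilate of $P_0$ and passing to an efficient $P$, so that the direct (local central limit) estimate in the final step is tight rather than merely an upper bound. By contrast, the Fourier/convexity estimate at the core is entirely elementary, and it is precisely the ``with replacement'' symmetrisation encoded in $\mu$ that brings the problem within reach of the inverse theorem for measures.
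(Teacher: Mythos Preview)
Your reduction to Theorem~\ref{gpg-mes} via a symmetrised measure is the paper's strategy, and your Fourier estimate for the polynomial-growth hypothesis is a correct alternative to Proposition~\ref{donk} (the paper instead takes $\mu=\tfrac12\delta+\tfrac1{2n}\sum_i\mu_i*\mu_i$, for which Proposition~\ref{donk} gives $\mu^{*n}(0)\ge\rho$ and hence the sharper bound $\|\mu^{*\lfloor n/2\rfloor}\|_{\ell^2}^{-2}\le\rho^{-1}$, so that \eqref{hpn-mes} yields $|HP|\le C_{\eps,A}\rho^{-1}$ rather than merely $n^{O_A(1)}$). The genuine gap is that you underread Theorem~\ref{gpg-mes}: its output is not just ``a positive proportion lie in $O(1)$ translates of $P_0$'' but the quantitative bound \eqref{sdt-mes} together with the exceptional-set bound \eqref{excep}. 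Since each $v_i$ carries $\mu$-mass $\asymp 1/n$, Chebyshev applied to \eqref{sdt-mes} immediately gives $\|v_i\|_{P,X}^2\le C_{\eps,A}/n'$ for all but at most $n'$ indices $i$; no upgrade step is needed. Your sketched justification for the upgrade (``$k$ vectors outside every bounded dilate of $P_0$ force $\rho\le 2^{-\Omega(k)}$'') is in fact false as stated: with $P_0=\{-N,\dots,N\}\subset\Z$ and $v_j=10jN$ for $1\le j\le k$, each $v_j$ lies outside every bounded dilate of $P_0$ once $j$ is large, yet the Bernoulli sum $\sum\xi_jv_j$ has concentration only polynomially small in $k$.

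For the size bound the paper also avoids any local central limit theorem. From $\|v_i\|_{P,X}\le C/\sqrt{n'}$ and \eqref{hps-def} one obtains a permutation $\sigma$ on $X$ with $v_i+x-\sigma(x)\in (C/\sqrt{n'})\cdot P$ for all $x\in X$; telescoping places the orbit $\{v_i,2v_i,\dots,mv_i\}$, $m:=\lfloor\sqrt{n'}\rfloor$, inside $X+CP$. One then embeds $X+CP$ (via \cite{tao-vu}) in a $2$-proper progression $P'$ of rank $r\le C_{\eps,A}$ and size $\le C_{\eps,A}\rho^{-1}$ (here the paper's sharper $|HP|$ bound is used), and properness of $P'$ forces $v_i$ into the sub-progression with $j$th sidelength $N_j/m$; letting $r'$ be the number of $j$ with $N_j\ge m$, this has rank $r'$ and size $\le C|P'|/m^{r'}\le C_{\eps,A}\rho^{-1}(n')^{-r'/2}$. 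Your local-CLT route, by contrast, presupposes that the retained $v_i$ span each direction of $P$ with frequency $\gg n$---otherwise the estimate $\rho\le C|P|^{-1}n^{-r/2}$ simply fails (take all $v_i$ equal to a single generator of a rank-$2$ progression)---and you give no mechanism for arranging this.
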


Setting $n'$ to be comparable to $n$ and ignoring the exceptional elements that lie outside of $P$, this result can be matched by a corresponding direct theorem; see \cite[Example 1.7]{nguyen}.  Inverse Littlewood-Offord theorems have a number of applications, particularly to the theory of discrete random matrices; see \cite{nguyen-survey} for details.

In this paper we will be concerned with analogues of these inverse theorems in the context of \emph{non-abelian} groups $G = (G,\cdot)$.  To reflect this we now switch to multiplicative notation rather than additive notation, for instance using product sets $A \cdot B := \{ab: a \in A, b \in B \}$, iterated product sets $A^n = A \cdot \ldots \cdot A$, etc.  

To state the non-abelian version of Freiman's theorem, we need to generalise the notion of a coset progression, recalling some notation from \cite{bgt}.  Define a \emph{nilprogression}\footnote{A pedantic point: technically, a nilprogression is not just the set $P$, but is instead the tuple $(P, r, G, (v_1,\dots,v_r), (N_1,\dots,N_r))$, thus it is possible for distinct nilprogressions to generate the same set $P$ by selecting different choices of $v_1,\dots,v_r$ or $N_1,\dots,N_r$.  Similarly for the concept of a coset nilprogression.  This distinction is needed later when we define the dilations $P^t$, $HP^t$ of a nilprogression $P$ or coset nilprogression $HP$.  However, in the paper we will frequently abuse notation and just refer to the set $P$ (or $HP$) as the nilprogression (or coset nilprogression).} $P = 
P( v_1,\dots,v_r; N_1,\dots,N_r )$, where $v_1,\dots,v_r$ are elements of a group $G$ and $N_1,\dots,N_r>0$ are real numbers, to be the set of all evaluations of words formed from $v_1,\dots,v_r,v_1^{-1},\dots,v_r^{-1}$, where for each $i=1,\dots,r$, the total number of times $v_i$ and $v_i^{-1}$ are used in the word is at most $N_i$, and such that $v_1,\dots,v_r$ generate a nilpotent group of some nilpotency class $s$, which we refer to as the \emph{nilpotency class} of the nilprogression; the quantity $r$ is the \emph{rank} of a nilprogression.  Next, we define a \emph{coset nilprogression} to be a set of the form $HP$, where $H$ is a finite group, and $P$ is a subset of the normaliser $N(H)$ of $H$ which becomes a nilprogression upon applying the quotient map from $N(H)$ to $N(H)/H$, that is to say the coset nilprogression $HP$ is the pullback of a nilprogression in $N(H)/H$.  Note that a coset progression is nothing more than a coset nilprogression in an abelian group.  We define the rank and nilpotency class of a coset nilprogression to be the rank and nilpotency class of the associated nilprogression.  

It turns out to be technically convenient to restrict to\footnote{One could also work instead with the closely related \emph{nilcomplete progressions} and \emph{nilpotent progressions} studied in \cite{tointon}, \cite{bt}.} a well behaved class of nilprogressions and coset nilprogressions.  Following \cite{bgt}, we call a nilprogression $P = P(u_1,\dots,u_r; N_1,\dots,N_r)$ in \emph{$C$-normal form} for some $C \geq 1$ if the following hold:
\begin{itemize}
\item[(i)] (Upper-triangular form) For every $1 \leq i < j \leq r$ and for all four choices of signs $\pm$ one has
$$ [u_i^{\pm 1}, u_j^{\pm 1}] \in P\left( u_{j+1},\dots,u_r; \frac{C N_{j+1}}{N_i N_j}, \dots, \frac{C N_r}{N_i N_j} \right).$$
Here and in the sequel we use the convention $[g,h] := g^{-1}h^{-1}gh$.
\item[(ii)] (Local properness) The expressions $u_1^{n_1} \dots u_r^{n_r}$ are distinct as $n_1,\dots,n_r$ range over the integers with $|n_i| \leq \frac{1}{C} N_i$ for $i=1,\dots,r$.
\item[(iii)] (Volume bound) One has
$$ \frac{1}{C} \prod_{i=1}^r (2\lfloor N_i\rfloor + 1) \leq |P| \leq C \prod_{i=1}^r (2\lfloor N_i\rfloor + 1).$$
\end{itemize}
We say that a coset nilprogression is in $C$-normal form if its associated nilprogression is in $C$-normal form.  Note that for a nilprogression (or coset nilprogression) in $C$-normal form, the nilpotency class is bounded by the rank $r$.

Next, following \cite{tao-product}, we define an \emph{$K$-approximate group} for some $K \geq 1$ to be a subset $A$ of a group $G$ which is symmetric (thus $A^{-1} := \{a^{-1}: a \in A \}$ is equal to $A$), contains the identity $1$, and is such that $A^2$ can be covered by at most $K$ left-translates of $A$.  We then have the following result from \cite[Theorem 2.10]{bgt}, building upon many previous results (see \cite{bgt-survey} for a survey):

\begin{theorem}[Inverse theorem for $K$-approximate groups]\label{bgt-thm}  Let $A$ be a finite $K$-approximate group in an arbitrary group $G$.  Then there exists a coset nilprogression $HP$ of rank and nilpotency class at most $C_K$ in $C_K$-normal form, such that $|HP| \leq C_K |A|$ and that $A$ is covered by at most $C_K$ left-translates of $HP$.  Here $C_K$ is a quantity depending only on $K$.  
\end{theorem}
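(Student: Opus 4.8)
This is the inverse theorem for approximate groups of Breuillard, Green, and the author, so strictly speaking its proof here is the citation to \cite{bgt}; let me nonetheless indicate how one would prove it. The plan is to argue by contradiction via a nonstandard (ultraproduct) reduction, to build a locally compact group modelling the approximate group, to apply the Gleason--Yamabe structure theorem to that group, and then to read off a coset nilprogression in normal form. First I would reduce to the case that $A$ generates $G$, and note that the $K$-approximate group hypothesis propagates through the Ruzsa covering lemma to covering bounds $|A^m| \leq K^{m-1}|A|$ for all $m$. If the theorem fails, there is a sequence of finite $K$-approximate groups $A_n \subset G_n$ witnessing the failure of the conclusion for every candidate value of $C_K$; passing to an ultraproduct $\mathbf{A} = \prod_{n \to \omega} A_n$ inside the nonstandard group $\mathbf{G} = \prod_{n \to \omega} G_n$ yields a single (nonstandard) $K$-approximate group for which no good coset nilprogression exists.

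The heart of the argument is to locate, inside a bounded power $\mathbf{A}^4$, a symmetric neighbourhood of the identity carrying a \emph{Hrushovski Lie model}: a surjective homomorphism $\pi \colon \langle \mathbf{A} \rangle \to L$ onto a locally compact group $L$ such that $\pi(\mathbf{A})$ is precompact and $\pi^{-1}(U) \subset \mathbf{A}^4$ for some open neighbourhood $U$ of the identity of $L$. This uses the Sanders--Croot--Sisask theory producing an approximately normal subgroup, together with model-theoretic compactness in the style of Hrushovski. One then applies the Gleason--Yamabe theorem to $L$: there is an open subgroup $L' \leq L$ and a compact normal subgroup $N$ of $L'$ with $L'/N$ a connected Lie group of dimension bounded in terms of $K$. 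The Gleason-type escape and commutator estimates, fed the polynomial-type covering bounds for $\mathbf{A}$, force this connected Lie group to be nilpotent --- a non-nilpotent connected piece would produce an escape norm incompatible with the growth of $\mathbf{A}$.

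Having obtained a nilpotent Lie structure, I would choose one-parameter subgroups of $L'/N$ adapted to its lower central series, lift them through $\pi$ to elements $u_1, \dots, u_r \in \langle \mathbf{A} \rangle$ with scales $N_1, \dots, N_r$ determined by how far one can escape along each direction while staying in $\mathbf{A}$, and take $H$ to be the pullback of $N$. The Baker--Campbell--Hausdorff commutator relations in the nilpotent Lie algebra, transferred through the local injectivity of the model map, give the upper-triangular containments (i); local injectivity itself gives local properness (ii); and comparing $|HP|$ with $|\mathbf{A}|$ by Ruzsa calculus gives the volume bound (iii), while the Ruzsa covering lemma covers $\mathbf{A}$ by boundedly many left-translates of $HP$. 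Finally one ``un-ultraproducts'', transferring this structure back to the original $A_n$ for $n$ near the ultrafilter, which contradicts the assumed failure.

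The main obstacle is the construction and exploitation of the locally compact model: producing the Hrushovski Lie model and then running the Gleason--Yamabe theorem together with the Gleason lemmas (trapping, escape, and commutator estimates) to pin down the nilpotent structure and the bounds on rank and class. Everything afterwards --- selecting the generators and the scales $N_i$, and verifying the three normal-form axioms together with the covering statement --- is comparatively routine Ruzsa calculus and Lie theory.
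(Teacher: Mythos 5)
This theorem is not proved in the present paper: it is quoted verbatim from \cite[Theorem 2.10]{bgt}, and the ``proof'' here is the citation. Your sketch is a faithful high-level outline of the argument that actually appears in that reference: the ultraproduct/contradiction reduction, construction of a locally compact (Hrushovski-style) good model for a bounded power of the ultra approximate group, application of Gleason--Yamabe to obtain an open-by-compact-by-(connected Lie) structure, use of the Gleason escape and commutator estimates to force the connected Lie quotient to be nilpotent of bounded dimension, extraction of generators and scales to build a coset nilprogression, verification of the normal-form axioms via Baker--Campbell--Hausdorff and local injectivity of the model, and finally transfer back to a standard statement via {\L}os's theorem.

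Two small points of precision. First, the bound $|A^m| \leq K^{m-1}|A|$ follows directly from iterating the defining covering $A^2 \subset XA$ with $|X| \leq K$ (together with symmetry and $1 \in A$); Ruzsa covering is not needed for that particular estimate, though it is used elsewhere (e.g.\ to cover $A$ by boundedly many translates of $HP$ at the end). Second, the construction of the good model in \cite{bgt} relies on Sanders' lemma (producing a descending chain of approximate subgroups to generate a group topology) together with the Gleason--Yamabe theorem; the phrase ``Sanders--Croot--Sisask theory'' conflates this with the almost-periodicity machinery, which plays a different role. Neither point affects the overall correctness of your summary; the main conceptual burden you identified --- building the Lie model and running the Gleason lemmas to pin down the nilpotent structure with bounded rank and class --- is indeed where the hard work lies.

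Worth emphasizing for the purposes of this paper: the specific feature of Theorem \ref{bgt-thm} that the present paper actually leans on, and then refines in Proposition \ref{npr}, is the \emph{normal form} control (upper-triangular commutators, local properness, volume bound). Your sketch correctly attributes these to the BCH relations transported through the local injectivity of the model map, but you may wish to note that the present paper further upgrades local properness to $\N$-properness by keeping track of the simply connected nature of the Lie model --- that refinement is not part of Theorem \ref{bgt-thm} itself and requires reopening the proof, which is exactly what Proposition \ref{npr} does.
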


The bounds on the rank and nilpotency class can be made quite effective; see \cite[Theorem 2.12]{bgt}.  However, the bound on the size of $HP$, the normal form, and the covering number are currently ineffective, due to the reliance in \cite{bgt} on results related to Hilbert's fifth problem.  As we will be relying heavily on Theorem \ref{bgt-thm} in this paper, the bounds in our results are similarly ineffective.  As far as the corresponding direct theorem is concerned, one can check that a coset nilprogression $HP$ of rank $r$ and nilpotency class $s$ in $C$-normal form is a $C_{r,s,C}$-approximate group for some $C_{r,s,C}$ depending on $r,s,C$; see \cite[Lemma C.1, Remark C.2]{bgt} or \cite{bt}.  This direct theorem is not a full converse to the above inverse theorem, since one has to also consider the situation where $A$ is covered by a bounded number of left-translates of $HP$ rather than being equal to $HP$, but shows that the description of approximate groups given in Theorem \ref{bgt-thm} is somewhat close to optimal.

Now we turn to nonabelian analogues of Theorem \ref{pgit-thm}.  We first recall two results in the literature, which were proven as consequences of Theorem \ref{bgt-thm}:

\begin{theorem}\label{exist}  Let $A$ be a symmetric finite subset of a group $G$ containing the identity, let $d > 0$, and suppose that $|A^n| \leq n^d |A|$ for some $n$ that is sufficiently large depending on $d$ (thus $n \geq n_0(d)$ for some sufficiently large $n_0(d)$ depending only on $d$).
\begin{itemize}
\item[(i)] \cite[Theorem 1.13]{bgt}  The group $\langle A \rangle$ generated by $A$ is virtually nilpotent.  Indeed, $\langle A \rangle$ has a subgroup $G'$ of index at most $C_d$, which contains a normal finite subgroup $H$ such that $G'/H$ is nilpotent of rank and nilpotency class at most $C'_d$, where $C_d,C'_d$ depend only on $d$. 
\item[(ii)] \cite[Theorem 1.1]{bt}  For any natural numbers $m \geq n$ and $k>1$, we have $|A^{km}| \leq C_d^k |A^m|$, where $C_d$ depends only on $d$.
\end{itemize}
\end{theorem}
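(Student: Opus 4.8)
The plan for both parts is to locate a single scale at which $A$ has already become an approximate group, apply Theorem~\ref{bgt-thm} there, and read off the structural and growth consequences. Since $|A^n|\le n^d|A|$, telescoping the multiplicative increments $|A^{3^{j+1}}|/|A^{3^{j}}|$ over the $\asymp\log n$ indices $j$ with $\sqrt n\le 3^{j}\le n/C_d$ (for a suitable constant $C_d$ fixed below, and $n$ large in terms of $d$) forces at least one increment to be $\le C_d$; fixing $m:=3^{j}$ for such a $j$ gives $|A^{3m}|\le C_d|A^m|$ with $\sqrt n\le m\le n/C_d$. As $A$ is symmetric and contains $1$, so is $B:=A^m$, and a standard Pl\"unnecke--Ruzsa-type estimate for noncommutative groups (see \cite{tao-product}) promotes the small-tripling bound $|B^3|\le C_d|B|$ to the statement that $B^2=A^{2m}$ is a $C_d$-approximate group. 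Applying Theorem~\ref{bgt-thm} to $A^{2m}$ then produces a coset nilprogression $HP$, of rank and nilpotency class $\le C_d$ and in $C_d$-normal form, with $|HP|\le C_d|A^{2m}|$ and $A^{2m}$ covered by $\le C_d$ left-translates of $HP$; after replacing $HP$ by a bounded power we may take the translating elements to lie in $A^{2m}$.

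For part (i): since $1\in A$ we have $A\subseteq A^{2m}$, so $\langle A\rangle=\langle A^{2m}\rangle$. Set $\Gamma:=\langle HP\rangle$; as $HP\subseteq N(H)$ we get $H\trianglelefteq\Gamma$, while $\Gamma/H$ is generated by the $\le C_d$ images of the generators of the underlying nilprogression and hence is a finitely generated nilpotent group of class and rank $\le C'_d$. To compare $\langle A\rangle$ with $\Gamma$: the covering together with $|HP|\le C_d|A^{2m}|$ yields a subset $X\subseteq HP$ with $|X|\ge c_d|HP|$ contained in a bounded power of $A^{2m}$; invoking the analogue for coset nilprogressions in normal form of the classical fact that a dense subset of a coset progression generates a finite-index subgroup, $\langle X\rangle$ has index $\le C_d$ in $\Gamma$, and $\langle X\rangle\le\langle A\rangle$, so $\langle A\rangle\cap\Gamma$ has index $\le C_d$ in $\Gamma$. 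A symmetric but more delicate argument — using that $A^{2m}$ meets only $\le C_d$ cosets of $\langle A\rangle\cap\Gamma$ inside $\langle A\rangle$, together with the virtual nilpotence of $\Gamma$ just obtained — shows $\langle A\rangle\cap\Gamma$ also has index $\le C_d$ in $\langle A\rangle$. Taking $G':=\langle A\rangle\cap\Gamma$ and $H':=H\cap G'$ (finite, and normal in $G'$ since $G'\le\Gamma$ normalises $H$), we have $G'/H'\hookrightarrow\Gamma/H$, so $G'/H'$ is nilpotent of bounded class and rank; this is (i).

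For part (ii): the commensurability above upgrades to honest inclusions $A^{2m}\subseteq(HP)^{C_d}$ and $HP\subseteq(A^{2m})^{C_d}$. Iterating these over powers of $A^{2m}$ gives, for every $t\ge n$, a sandwich $(HP)^{c_d t/m}\subseteq A^{t}\subseteq(HP)^{C_d t/m}$ in which, thanks to the choice $m\le n/C_d$, the lower dilation factor is $\ge 1$. Now a nilprogression $P$ in $C$-normal form of rank $r$ obeys a uniform polynomial-growth law $|P^{t}|\asymp_{C,r}t^{D}|P|$ for all $t\ge 1$, where $D=D(P)\le C_r$ is the nilprogression analogue of the Bass--Guivarc'h growth exponent (see \cite{bt}); consequently $|(HP)^{\lambda t}|\asymp_{C,r}\lambda^{D}|(HP)^{t}|$ for all $\lambda,t\ge 1$. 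Combining with the sandwich, for any $m'\ge n$ and any $k>1$,
$$ \frac{|A^{km'}|}{|A^{m'}|}\ \le\ \frac{|(HP)^{C_d k m'/m}|}{|(HP)^{c_d m'/m}|}\ \le\ C_d\,k^{C_d}, $$
uniformly in $m'$ (the factors $m'/m$ being absorbed by the growth law); and $C_d\,k^{C_d}\le C_d^{k}$ for all $k\ge 2$ once $C_d$ is enlarged, which gives (ii).

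The main obstacle, common to both parts, is converting the soft output of Theorem~\ref{bgt-thm} — a covering of an approximate group by boundedly many translates of a coset nilprogression — into hard algebraic data: a subgroup of uniformly bounded index in (i), and the scale-uniform two-sided comparison $A^{t}\leftrightarrow(HP)^{t/m}$ in (ii). The delicate points are the commensurability bookkeeping, which must handle the non-normality in $G$ of the generated group $\langle HP\rangle$ and the absorption of the generally non-normalising translating elements; the uniform polynomial growth of normal-form nilprogressions; and the verification that a single large scale $n$ already suffices to launch the argument. These are precisely the contributions of \cite{bgt} and \cite{bt}, whereas the initial pigeonhole is routine.
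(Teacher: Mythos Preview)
The paper does not itself prove Theorem~\ref{exist}; it simply cites the two parts from \cite[Theorem~1.13]{bgt} and \cite[Theorem~1.1]{bt} as previously known consequences of Theorem~\ref{bgt-thm}. Your overall strategy --- pigeonhole to a scale of small tripling, pass to an approximate group, invoke Theorem~\ref{bgt-thm}, then read off structure and growth --- is indeed the shape of those original arguments and is the right plan.

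However, your sketch for part~(ii) contains a genuine error, not merely a gap. The asserted ``honest inclusion'' $A^{2m}\subseteq(HP)^{C_d}$ is false in general. Take the dihedral example (Example~\ref{sip}): if $A$ contains a reflection, it can never lie in any power of $HP$, since $\langle HP\rangle$ consists entirely of translations. The covering produced by Theorem~\ref{bgt-thm} is only $A^{2m}\subseteq \bigcup_i a_i HP$, and the translating elements $a_i$ need not lie in $\langle HP\rangle$ at all; they cannot be absorbed into a power of $HP$. The correct sandwich is of the form
\[
HP^{m'} \;\subset\; A^{C_d m' m} \;\subset\; X\,HP^{C_d^2 m'}
\]
with a bounded auxiliary set $X$ of coset representatives retained on the right, exactly as in \eqref{hom}. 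Establishing this requires the permutation--seminorm machinery $\|\cdot\|_{HP,X}$ (or an equivalent device from \cite{bt}), which is precisely what the proof of Theorem~\ref{gpg} provides. Once the $X$ is carried along, its bounded cardinality makes it harmless for the volume ratio, and the argument for (ii) goes through.

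A second, smaller issue: the claim ``$|P^t|\asymp_{C,r} t^D|P|$ for all $t\ge 1$'' is too strong. As the analysis in Section~\ref{furt-sec} shows (see \eqref{qa}), $\log|P^t|$ is only \emph{piecewise} linear in $\log t$, with slopes that can change; the Heisenberg example in the introduction exhibits exactly this. What is true, and sufficient for your purposes, is the one-sided bound $|(HP)^{\lambda t}|\le C_d\,\lambda^{C_d}|(HP)^t|$ for $\lambda,t\ge 1$, which follows because all slopes are bounded by $C_d$.

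Finally, in part~(i) the sentence ``a symmetric but more delicate argument \ldots\ shows $\langle A\rangle\cap\Gamma$ also has index $\le C_d$ in $\langle A\rangle$'' is doing real work that you have not supplied. The standard route is to note that $A\subseteq A^{2m}$ meets only boundedly many left cosets of $\Gamma$ in the ambient group, so left-multiplication by $A$ acts on a finite set of such cosets, and the kernel of the resulting action of $\langle A\rangle$ on this finite orbit has bounded index and lies in $\Gamma$; but this needs to be written out, and one must be careful that $\Gamma$ need not be contained in $\langle A\rangle$ (again handled via the containment $HP\subset A^{O(m)}$, which is available from the finer version of Theorem~\ref{bgt-thm} used in Proposition~\ref{npr}, not from the black-box statement alone).
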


These results give a significant amount of control on $A$; for instance, the first claim (i) easily implies Gromov's theorem \cite{gromov} that finitely generated groups of polynomial growth are virtually nilpotent.  However, these statements are not complemented by a matching direct theorem; the conclusions (i), (ii) do not obviously imply a bound like $|A^n| \leq n^d |A|$.  

\subsection{New results}

Our first main result gives an inverse theorem that comes with a matching direct theorem.  To state it, we need some additional definitions.  Given a nilprogression $P = P(v_1,\dots,v_r;N_1,\dots,N_r)$ in a group $G$ and a parameter $t>0$, we define the dilation $P^t$ by the formula
$$ P^t := P(v_1,\dots,v_r;tN_1,\dots,tN_r).$$
Note that this agrees with the existing definition of $P^t$ as an iterated product when $t$ is a natural number.  Given a group element $g \in G$, we define the ``norm'' $\|g\|_P \in [0,+\infty]$ by the formula
$$ \|g\|_P := \inf \{ t \in [0,+\infty]: g \in P^t \}.$$
It is easy to see that $\|g\|_P=0$ if and only if $g=1$, that $\|g^{-1}\|_P = \|g\|_P$, and $\|gh\|_P \leq \|g\|_P + \|h\|_P$ for all $g,h \in G$, thus justifying the denotation of $\| \|_P$ as a ``norm''.  One can think of the nilprogression $P$ as being analogous to a symmetric convex body in a vector space, in which case $\| \|_P$ is analogous to the norm generated by that body as a unit ball.  Informally, elements $g \in G$ which are small in $\| \|_P$ norm will almost preserve $P$ by left multiplication: $gP \approx P$.

Similarly, given a coset nilprogression $HP$ in a group $G$ and a parameter $t > 0$, we define the coset nilprogression $HP^t$ to be the pullback of the nilprogression $(HP/H)^t$ under the quotient map from $N(H)$ to $N(H)/H$, and then define
$$ \|g\|_{HP} := \inf \{ t \in [0,+\infty]: g \in HP^t \}.$$
Again we have $\|g^{-1}\|_{HP} = \|g\|_{HP}$ and $\|gh\|_{HP} \leq \|g\|_{HP} + \|h\|_{HP}$ for all $g,h \in G$; furthermore one has $\|g\|_P = 0$ if and only if $g \in H$.  Thus one can view $\| \|_{HP}$ as a ``seminorm''.  As before, elements $g \in G$ which are small in $\| \|_{HP}$ seminorm will (informally speaking) almost preserve $HP$: $gHP \approx HP$.

Finally, we need a ``virtual'' extension of the ``seminorm'' $\| \|_{HP}$.  Given a finite non-empty set $X \subset G$ and a group element $g \in G$, we define the ``seminorm'' $\|g\|_{HP,X}$ by the formula
\begin{equation}\label{hps-def}
 \|g\|_{HP,X} := \inf_{\sigma \in \operatorname{Sym}(X)} \sup_{x \in X} \| \sigma(x)^{-1} g x \|_{HP},
\end{equation}
where $\operatorname{Sym}(X)$ denotes the group of permutations $\sigma: X \to X$ on the finite set $X$.
Again, we have $\|g^{-1} \|_{HP,X} = \|g\|_{HP,X}$ and $\|gh\|_{HP,X} \leq \|g\|_{HP,X} + \|h\|_{HP,X}$.  If $\|g\|_{HP,X}=0$ then $gXHP = XHP$; more generally, one should think of elements $g$ that are small in $\| \|_{HP,X}$ seminorm to approximately preserve $XHP$, thus $gXHP \approx XHP$.

\begin{example}\label{sip}  Consider the dihedral group $\{-1,+1\} \ltimes \Z$ of maps $x \mapsto ax+b$ with $a \in \{-1,+1\}$ and $b \in \Z$.  The set of translations $P = \{ x \mapsto x+n: |n| \leq N \}$ is a nilprogression, and hence also a coset nilprogression $HP$ if we take $H$ to be trivial.  If we let $X$ consist of the identity map $x \mapsto x$ and the reflection $x \mapsto -x$, then a map $g$ of the form $x \mapsto ax+b$ with $a \in \{-1,+1\}$ and $b \in \Z$ will have seminorm $\|g\|_{HP,X} = |b|/N$.
\end{example}

We can now state our first main theorem, which we establish in Sections \ref{nonst-sec}-\ref{inv-grow}.

\begin{theorem}[Inverse theorem for polynomial growth]\label{gpg}  Let $A$ be a finite non-empty subset of a group $G$, let $d > 0$, and suppose that $|A^n| \leq n^d |A|$ for some $n$ that is sufficiently large depending on $d$.  Then there exists a coset nilprogression $HP$ of rank and nilpotency class at most $C_d$ in $C_d$-normal form, and a finite subset $X$ of $G$ of cardinality at most $C_d$ containing the identity, such that
\begin{equation}\label{hpn}
 HP \subset (A \cup \{1\} \cup A^{-1})^{C_d n}
\end{equation}
and such that
\begin{equation}\label{sdt}
 A \cup \{1\} \cup A^{-1} \subset\left \{ g \in G: \|g\|_{HP,X} \leq \frac{C_d}{n} \right\},
\end{equation}
where $C_d$ is a natural number depending only on $d$.
\end{theorem}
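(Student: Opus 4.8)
The plan is to work throughout in a nonstandard (ultraproduct) framework --- which is why the statement is insensitive to polynomial losses, and why the relevant sections begin with a nonstandard reduction. Fix a standard $d$; passing to a nonstandard universe we may take $n$ to be an unlimited nonstandard natural number and $A$ an internal finite set with $|A^n| \le n^d|A|$. Each ``$C_d$'' is to be read as a standard quantity, to be produced; equivalently one proves \eqref{hpn} with $HP$ contained in a \emph{limited} power of $A\cup\{1\}\cup A^{-1}$, and \eqref{sdt} with the right-hand side an infinitesimal of the form $(\text{limited})/n$, and then transfers back.

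\emph{First I would reduce to an approximate group at scale $n$.} Put $\tilde A := A\cup\{1\}\cup A^{-1}$. The noncommutative Ruzsa--Plünnecke calculus (as in \cite{tao-product}, \cite{bgt}) turns the hypothesis into $|\tilde A^m| \le m^{O_d(1)}|A|$ for $m$ up to a constant multiple of $n$, and then Theorem \ref{exist}(ii), applied to $\tilde A$ at scale $n$, upgrades this to $|\tilde A^{kn}| \le C_{d,k}|\tilde A^n|$ for every standard $k$. Hence $\mathbf A := \bigcup_{k\ \text{limited}}\tilde A^{kn}$ (an increasing union of internal sets) --- and already the internal set $\tilde A^{2n}$ --- is a $C_d$-approximate group: it is symmetric, contains $1$, and $\tilde A^{4n}$ is covered by $C_d$ left translates of $\tilde A^{2n}$, by the standard greedy argument packing disjoint translates of $\tilde A^n$ into $\tilde A^{8n}$, whose cardinality is $\le C_d|\tilde A^n|$. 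It is essential to localise at exactly the scale $n$ (this is what Theorem \ref{exist}(ii) buys us): only then will the nilprogression produced below be an ``$n$-fold dilate'' of $\tilde A$, so that individual elements of $\tilde A$ acquire norm $O_d(1/n)$ rather than $O_d(1)$.

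\emph{Next I would extract $HP$ and $X$.} Apply the inverse theorem for approximate groups (Theorem \ref{bgt-thm}, in the nonstandard form underlying the Hilbert's-fifth-problem machinery) to $\mathbf A$: this yields a coset nilprogression $HP$ of rank and nilpotency class $\le C_d$, in $C_d$-normal form, with $|HP| \asymp_d |\tilde A^{2n}|$, with $\mathbf A$ covered by $\le C_d$ left translates of $HP$, and with $HP$ consisting of limited-length words in $\mathbf A$, so in particular $HP \subseteq \tilde A^{C_d n}$ --- this is \eqref{hpn}. Moreover one may take the finite group $H$ so that $\langle HP\rangle/H$ is \emph{torsion-free} nilpotent (it embeds as a cocompact discrete subgroup of the simply-connected nilpotent Lie group attached to $\mathbf A$), which forces every element of $\langle\tilde A\rangle$ of order $o(n)$ modulo the relevant normal subgroups into $H$; this is the mechanism keeping the $N_i$ of $P$ all comparable to $n$. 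Since $\langle HP\rangle$ has finite index $\le C_d$ in $\langle A\rangle=\langle\tilde A\rangle$ (Theorem \ref{exist}(i)), I would fix a system $X$ of representatives for the cosets $x\langle HP\rangle$ with $1\in X$, $|X|\le C_d$, and $X\subseteq\tilde A^{C_d}$; this is the $X$ of the statement.

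\emph{The remaining task is the seminorm bound \eqref{sdt}, and here lies the main obstacle.} The technical heart is a \emph{commensurability at scale $n$} statement: for every standard $C$ one has $\tilde A^{Cn} \subseteq X\,HP^{C_d}$, i.e. bounded powers of $\tilde A$ at scale $n$ do not escape a limited dilate of $HP$ modulo $X$. This is where the nonstandard/Lie-theoretic structure and the torsion-freeness of $\langle HP\rangle/H$ are genuinely used (in the limit Lie group, $\tilde A^{kn}$ and $HP$ both correspond to mutually commensurable bounded neighbourhoods of the identity), and making it precise --- the bookkeeping of how the bounded-index subgroup, its normal core, and $H$ interact, and verifying that nothing of unbounded order below scale $n$ is left outside $H$ --- is the step I expect to be the main obstacle; everything before it is a fairly routine assembly. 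Granting it, fix $g\in\tilde A$. Left multiplication by $g$ permutes the cosets $x\langle HP\rangle$ ($x\in X$), so there is $\sigma_g\in\operatorname{Sym}(X)$ with $w_x := \sigma_g(x)^{-1}gx\in\langle HP\rangle$ for each $x$; also $w_x\in X^{-1}\tilde A\,X\subseteq\tilde A^{2C_d+1}$, hence $w_x^k\in\tilde A^{(2C_d+1)k}$ for all $k$, and for $k\le n/C_d$ the commensurability statement places $w_x^k$ in $X\,HP^{C_d}\cap\langle HP\rangle = HP^{C_d}$ (using $1\in X$ and that the $xHP^{C_d}$, $x\in X$, lie in distinct cosets of $\langle HP\rangle$). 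Now invoke the metric regularity of nilprogressions in normal form over torsion-free nilpotent groups (as in \cite{bt}, \cite{bgt}): if $w_x\notin H$ then its image in $\langle HP\rangle/H$ has infinite order and $\|w_x^k\|_{HP}\gtrsim_d k\,\|w_x\|_{HP}$ for all $k\ge1$; combining with $\|w_x^k\|_{HP}\le C_d$ at $k\asymp n/C_d$ gives $\|w_x\|_{HP}\le C_d'/n$, while $w_x\in H$ gives $\|w_x\|_{HP}=0$. Taking the supremum over $x\in X$ yields $\|g\|_{HP,X}\le C_d'/n$, which is \eqref{sdt}.
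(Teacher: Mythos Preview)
Your plan has the right shape---nonstandard reduction, extract $HP$ from the approximate-group structure, define $X$ via cosets, then show each $w_x=\sigma_g(x)^{-1}gx$ has small $HP$-norm by trapping its powers in a bounded dilate of $HP$---but there is a genuine gap in the very first reduction, and the route you propose for the ``main obstacle'' is not the one the paper takes.

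\textbf{The gap in Step 1.} You claim that Ruzsa--Pl\"unnecke turns $|A^n|\le n^d|A|$ into $|\tilde A^m|\le m^{O_d(1)}|A|$ for $m$ up to a constant multiple of $n$, so that Theorem~\ref{exist}(ii) applies to $\tilde A$ at scale $n$. This is not justified: the hypothesis is a single-scale bound on $A^n$, and standard tripling/Ruzsa arguments only give that $\tilde A$ has tripling constant $\le n^{O(d)}$, which yields exponential-in-$m$ bounds, useless for $m\asymp n$. The paper explicitly flags this: because $A$ is not assumed symmetric, one cannot directly take $n_0\asymp n$. Instead one pigeonholes to find some unbounded $n_0\ll n$ with $|A^{100n_0}|\ll|A^{n_0}|$, runs the entire argument at scale $n_0$ to obtain $\|a\|_{HP,X}\ll 1/n_0$, and only then uses the resulting inclusions $HP^m\subset\tilde A^{Cmn_0}\subset XHP^{C^2m}$ (together with the volume estimates for $HP^m$ from \cite{bt}) to deduce $|\tilde A^{100n}|\asymp|\tilde A^n|$; the argument is then rerun with $n_0$ replaced by $n$. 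Your shortcut via Theorem~\ref{exist}(ii) is circular: that theorem needs the polynomial growth hypothesis for the \emph{symmetric} set $\tilde A$, which is exactly what is not yet known.

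\textbf{The commensurability step.} Even at the correct scale, the paper does not argue via abstract Lie-model commensurability as you sketch. It uses a Sanders-type almost-invariance metric $d(x,y):=|Bx\triangle By|/|B|$ with $B=A^{n_2}$ chosen so that $|BA|\le(1+O(1/n_0))|B|$, giving $d(a,1)\ll 1/n_0$ for $a\in A$. A crucial lemma (Lemma~\ref{loma}) then adjusts the coset representatives $X$ by right-multiplication in $\langle HP\rangle$ so that $d(x,x')\asymp d_{\overline X}(x\langle HP\rangle,x'\langle HP\rangle)$; without this gauge-fixing the bound $d(\sigma_a(x),\sigma_a(x)g)\ll 1/n_0$ does not follow. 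With it, one gets $d(x',x'g^m)\ll m/n_0$ and hence $g^m\in (x')^{-1}B^{-1}Bx'\subset HP^{O(1)}$ for all $m\ll n_0$. Finally, the passage from ``$g^m\in HP^{O(1)}$ for $m\ll n_0$'' to ``$\|g\|_{HP}\ll 1/n_0$'' requires the $\N$-properness refinement of the BGT nilprogression (Proposition~\ref{npr}), which is stronger than the local properness in the definition of normal form and must be separately arranged; this plays the role of your ``metric regularity'' but is proved, not cited.
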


\begin{example} Continuing Example \ref{sip}, if we take $A$ to be the set of maps $x \mapsto ax+b$ with $a \in \{-1,+1\}$ and $|b| \leq N/n$ for some $1 \leq n \leq N$ then we see that the hypotheses and conclusion of Theorem \ref{gpg} are satisfied with the given value of $HP$ and some absolute constants $d, C_d$.
\end{example}

The conclusion of this theorem gives quite a bit of information about $A$.  For instance, if $m$ is a natural number, we see from \eqref{hpn} that
$$ HP^m \subset (A \cup \{1\} \cup A^{-1})^{C_d mn}$$
and from \eqref{sdt} and the triangle inequality we see that every element of $(A \cup \{1\} \cup A^{-1})^{C_d mn}$ has an $\|\|_{HP,X}$ norm of at most $C_d^2 m$, which in particular implies that such elements lie in $XHP^{C_d^2 m}$ (since $X$ contains the identity).  Thus we have the inclusions
\begin{equation}\label{hom}
HP^m \subset (A \cup \{1\} \cup A^{-1})^{C_d mn} \subset XHP^{C_d^2 m}
\end{equation} 
which show that the growth of $(A \cup \{1\} \cup A^{-1})^{mn}$ is essentially controlled by that of $HP^m$.  (This latter observation was already implicitly used in \cite{bt} to establish Theorem \ref{exist}(ii).)   The $m=1$ case of \eqref{hom} also shows (in the case when $A$ is a symmetric neighbourhood of the identity, so that $A = A \cup \{1\} \cup A^{-1}$) that under the hypotheses \eqref{hpn} and \eqref{sdt}, the condition $|A^n| \leq n^d |A|$ is equivalent (up to constants) to the lower bound $|A| \geq n^{-d} |HP|$, giving a direct theorem to match the inverse theorem.

Using \eqref{hom}, together with a detailed analysis of the growth of coset nilprogressions $HP^m$, we will be able to obtain the following result, which gives a more precise form of Theorem \ref{exist}(ii):

\begin{theorem}[Further growth of a locally polynomially growing set]\label{furt}  Let the notation and hypotheses be as in Theorem \ref{gpg}.  Then there exists a continuous piecewise linear non-decreasing function $f: [0,+\infty) \to [0,+\infty)$, with $f(0)=0$ and $f$ having at most $C_d$ distinct linear pieces, each of which has a slope that is a natural number not exceeding $C_d$, such that
$$ \left|\log |(A \cup \{1\} \cup A^{-1})^{mn}| - \log |(A \cup \{1\} \cup A^{-1})^n| - f(\log m)\right| \leq C_d$$
for all natural numbers $m \geq 1$, where $C_d$ depends only on $d$.
\end{theorem}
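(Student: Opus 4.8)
The plan is to derive Theorem \ref{furt} from the inclusions \eqref{hom} by reducing the problem to understanding the growth function $m \mapsto \log|HP^m|$ of a coset nilprogression $HP$ of bounded rank and nilpotency class in $C_d$-normal form. Indeed, \eqref{hom} sandwiches $|(A \cup \{1\} \cup A^{-1})^{mn}|$ between $|HP^m|$ and $|XHP^{C_d^2 m}|$; since $|X| \leq C_d$, the latter is comparable (up to a $C_d$ factor in cardinality, hence an additive $O(C_d)$ in the logarithm) to $|HP^{C_d^2 m}|$. So it suffices to show that $\log|HP^m|$ agrees, up to an additive constant $O_d(1)$, with $\log|HP| + g(\log m)$ for some continuous piecewise-linear nondecreasing $g$ with $O_d(1)$ pieces, each of integer slope at most $C_d$ — and then one must check that the multiplicative slippage between the arguments $m$, $C_d^2 m$, $n$ and the indices appearing in \eqref{hom} only perturbs things by $O_d(1)$, which is routine since a piecewise-linear function with bounded integer slopes changes by $O_d(1)$ under a bounded multiplicative change of argument.

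The heart of the matter is therefore the growth of dilates of a coset nilprogression. First I would pass from the coset nilprogression $HP$ to the underlying nilprogression $\bar P = HP/H$ in the nilpotent group $N(H)/H$, noting $|HP^m| = |H| \cdot |\bar P^m|$ so that $\log|HP^m| - \log|HP| = \log|\bar P^m| - \log|\bar P|$ and $H$ drops out entirely. For a nilprogression $\bar P = P(v_1,\dots,v_r; N_1,\dots,N_r)$ in $C$-normal form, the volume bound (iii) gives $|\bar P^m| \asymp_{r,C} \prod_{i=1}^r (2\lfloor m N_i \rfloor + 1)$ for every natural number $m \geq 1$: the upper-triangular form (i) is preserved under dilation (the commutator bounds scale correctly), local properness (ii) holds for the dilate since enlarging the $N_i$ only helps, and the volume bound (iii) then applies to $\bar P^m$ with the same constant. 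Hence
$$\log|\bar P^m| = \sum_{i=1}^r \log(2\lfloor m N_i\rfloor + 1) + O_{r,C}(1) = \sum_{i=1}^r \log\bigl(\max(1, m N_i)\bigr) + O_{r,C}(1),$$
using that $2\lfloor x \rfloor + 1 \asymp \max(1,x)$. Writing $m = e^u$ with $u = \log m$, each summand $\log\max(1, e^u N_i) = \max(0, u + \log N_i)$ is a continuous piecewise-linear function of $u$ with slopes $0$ and $1$ and a single breakpoint at $u = -\log N_i$; summing $r$ of these gives a continuous piecewise-linear nondecreasing function of $u$ with at most $r+1$ pieces, integer slopes between $0$ and $r$, and value at $u=0$ equal to $\sum_i \log\max(1,N_i) = \log|\bar P| + O_{r,C}(1)$. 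Subtracting this constant defines the desired $f$ (with $f(0)=0$), and the whole identity holds with error $O_{r,C}(1) = O_d(1)$ since $r \leq C_d$ and $C = C_d$.

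Finally I would assemble the pieces: combine \eqref{hom} with $|X| \leq C_d$ to get $\log|HP^m| - O_d(1) \leq \log|(A \cup \{1\} \cup A^{-1})^{mn}| \leq \log|HP^{C_d^2 m}| + O_d(1)$; apply the growth formula above at $m$ and at $C_d^2 m$; observe that $f(\log(C_d^2 m)) - f(\log m) = O_d(1)$ since $f$ has bounded integer slopes and $\log(C_d^2 m) - \log m = \log C_d^2 = O_d(1)$; likewise absorb the discrepancy between $\log|(A \cup \{1\} \cup A^{-1})^n|$ and $\log|HP| = \log|HP^1|$ (which is $O_d(1)$ by the $m=1$ case of \eqref{hom}) into the error term; and deduce the claimed bound with the function $f$ just constructed. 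The main obstacle is the verification that $C$-normal form is stable under dilation in exactly the sense needed to reapply the volume bound (iii) to $\bar P^m$ uniformly in $m$ — the commutator/upper-triangular condition (i) requires a small computation to confirm its scaling, and one must be slightly careful that the value of $C$ does not deteriorate with $m$; everything downstream is then elementary real-variable bookkeeping with piecewise-linear functions, where the only point requiring attention is tracking that all the absorbed errors and slope counts depend on $d$ alone.
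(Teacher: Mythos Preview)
Your argument has a genuine gap at precisely the point you flag as ``the main obstacle'': the $C$-normal form is \emph{not} stable under dilation, and consequently your volume formula $|\bar P^m| \asymp_{r,C} \prod_i (2\lfloor mN_i\rfloor+1)$ is false in general. For the upper-triangular condition (i), the dilate $P^m = P(v_1,\dots,v_r;mN_1,\dots,mN_r)$ would require
\[
[v_i^{\pm 1},v_j^{\pm 1}] \in P\Bigl(v_{j+1},\dots,v_r;\ \tfrac{C\,(mN_{j+1})}{(mN_i)(mN_j)},\dots\Bigr) = P\Bigl(v_{j+1},\dots,v_r;\ \tfrac{CN_{j+1}}{m\,N_iN_j},\dots\Bigr),
\]
a box that is \emph{smaller} by a factor of $m$ than what (i) guarantees for $P$; the constant degrades from $C$ to $Cm$. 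Likewise, local properness (ii) for the dilate asks for injectivity on the larger range $|n_i|\le mN_i/C$, which does not follow from injectivity on $|n_i|\le N_i/C$. So neither (i) nor (ii), and hence not the volume bound (iii), transfers to $P^m$ with a uniform constant.

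The Heisenberg example given immediately after the theorem is a concrete counterexample to your volume formula. Take $\bar P = P(e_1,e_2,e_3;N,N,N^3)$ in the integer Heisenberg group with $[e_1,e_2]=e_3$ central; this is in $O(1)$-normal form, and your formula predicts $|\bar P^m| \asymp m^3 N^5$ for all $m\ge 1$. But commuting up to $mN$ copies of $e_1$ past $mN$ copies of $e_2$ produces up to $(mN)^2$ extra copies of $e_3$, so in fact $|\bar P^m|\asymp (mN)^2(mN^3+m^2N^2)$, which for $m\gg N$ is $\asymp m^4N^4$. Thus $\log|\bar P^m|-\log|\bar P|$ has slope $3$ for $\log m<\log N$ and slope $4$ for $\log m>\log N$: the slope \emph{increases}, something your formula (a single linear piece of slope $r=3$ once all $N_i\ge 1$) cannot produce.

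The paper's proof accounts for these commutator contributions by introducing formal commutator words $w$ with weights $(mN)^w=m^{|w|}N^w$; after lifting $\bar P$ to a simply connected nilpotent Lie group (Proposition~\ref{lak}) it shows $|\bar P^m|$ is comparable to the volume of an explicit convex body $B_m$ (Propositions~\ref{mprop} and~\ref{lank}), hence to a polynomial $V(m)$ with nonnegative coefficients whose monomial degrees record the commutator lengths. A further induction on rank handles the regime where the lift eventually loses injectivity, which is where slopes can also \emph{decrease}.
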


We establish this theorem in Section \ref{furt-sec}.  As remarked previously, the inclusions in \eqref{hom} were essentially already obtained in \cite{bt}, so this result should be viewed more as a complement to Theorem \ref{gpg} than a consequence of it.

We illustrate Theorem \ref{furt} with two examples, one which shows that the slope of $f$ can decrease over time, and the other showing that it can increase.  

\begin{example}  Let $G$ be the additive group $G := \Z/N^3\Z \times \Z/N^3\Z$ for a large natural number $N$, and let $A \subset G$ be the set $A := \{-N,\dots,N\} \times \{-N^2,\dots,N^2\}$.  Then one can compute that
$$ \log |mA| = \log |A| + f( \log m ) + O(1) $$
where $f(x)$ is equal to $2x$ for $0 \leq x \leq \log N$, equal to $x + \log N$ for $\log N \leq x \leq 2 \log N$, and equal to $2 \log N$ for $x \geq 2 \log N$.  This gives an example of Theorem \ref{furt} (with $n$ and $d$ equal to suitable constants, independent of $N$) in which the slopes of $f(x)$ decrease as $x$ increases.
\end{example}

\begin{example}  Let $G$ be the Heisenberg group $G = \begin{pmatrix} 1 & \Z & \Z \\ 0 & 1 & \Z \\ 0 & 0 & 1 \end{pmatrix}$, let $N$ be a large natural number, and let $A \subset G$ be the set consisting of those matrices $\begin{pmatrix} 1 & a & b \\ 0 & 1 & c \\ 0 & 0 & 1 \end{pmatrix}$ with $a,b,c \in \Z$, $|a|, |c| \leq N$, and $|b| \leq N^3$.  One can then compute that for any natural number $m$, $(A \cup \{1\} \cup A^{-1})^m$ is commensurate with the set of matrices $\begin{pmatrix} 1 & a & b \\ 0 & 1 & c \\ 0 & 0 & 1 \end{pmatrix}$ with $|a|, |c| \leq mN$ and $|b| \leq m N^3 + m^2 N^2$.  As such one has
$$ \log |(A \cup \{1\} \cup A^{-1})^m| = \log |A| + f( \log m ) + O(1) $$
where $f(x)$ is equal to $3x$ for $0 \leq x \leq \log N$, and equal to $4x - \log N$ for $x \geq \log N$, thus giving an example where the slopes of $f(x)$ increase as $x$ increases.
\end{example}

One can take a Cartesian product of these two examples (with different choices of $N$) to produce an example of a function $f$ which is neither convex nor concave; we leave the verification of this construction to the interested reader.  We conjecture that an analogue of Theorem \ref{furt} holds with $(A \cup \{1\} \cup A^{-1})^{mn}$ replaced by $A^{mn}$, but we will not pursue this question here.  Theorem \ref{furt} may also be compared with the result of Khovanski{\v i} \cite{kh}, which asserts that for any finite subset $A$ of an \emph{abelian} group $G$, the cardinality $|A^n|$ is a polynomial function of $n$ for sufficiently large $n$.  For comparison, Theorem \ref{furt} shows that the function $m \mapsto |A^m|$ is comparable to a piecewise polynomial function of $m$ for $m \geq n$, where the degree and number of pieces of this function, as well as the comparability constants, are bounded by a constant depending only on $d$.  The result in \cite{kh} has been extended to abelian semigroups (see \cite{nat,nathanson}) and to virtually abelian groups (see \cite{benson}); there are partial extensions to the virtually nilpotent case \cite{benson2}, but there exist nilpotent groups for which $|A^n|$ is not eventually polynomial \cite{stoll}, although it is always asymptotic to a polynomial \cite{donne}.

Now we turn to the analogous problem for measures.  Suppose that one is given a discrete probability measure $\mu$ on a group $G = (G,\cdot)$, or equivalently a non-negative function $\mu: G \to \R^+$ with $\sum_{x \in G} \mu(x) = 1$.   The convolution $\mu*\nu: G \to \R^+$ of two such probability measures, defined by
$$ \mu*\nu(x) := \sum_{y \in G} \mu(y) \nu(y^{-1} x)$$
is again a probability measure.  We denote by $\mu^{*n}$ the convolution of $n$ copies of $\mu$.  Note that if $\mu$ is symmetric (by which we mean that $\mu(x^{-1})= \mu(x)$ for all $x \in G$), then $\mu^{*n}$ is symmetric for all $n$.

The quantity $\| \mu \|_{\ell^2(G)}^{-2} := (\sum_{x \in G} \mu(x)^2)^{-1}$, which is a quantity in the interval $[1,+\infty)$, is a measure of how broadly the probability measure $\mu$ is supported.  For instance, if $\mu = \frac{1}{|A|} 1_A$ is uniform measure on a finite set $A$, then $\| \mu \|_{\ell^2(G)}^{-2}$ is equal to $|A|$.  The quantity $\| \mu^{*n} \|_{\ell^2(G)}^{-2}$ is then analogous to the quantity $|A^n|$ discussed previously.  

From Young's inequality we see that the quantity $\| \mu^{*n} \|_{\ell^2(G)}^{-2}$ is non-decreasing in $n$.  The following inverse theorem, analogous to Theorem \ref{gpg}, describes those measures $\mu$ for which $\| \mu^{*n} \|_{\ell^2(G)}^{-2}$ grows at most polynomially in $n$:

\begin{theorem}[Inverse theorem for polynomial growth of measures]\label{gpg-mes}  Let $\mu$ be a symmetric probability measure on a group $G$, let $d > 0$ and $\eps>0$, and suppose that 
\begin{equation}\label{anda}
\| \mu^{*n} \|_{\ell^2(G)}^{-2} \leq n^d \| \mu \|_{\ell^2(G)}^{-2}
\end{equation}
for some $n$ that is sufficiently large depending on $d$ and $\eps$.  Then there exists a coset nilprogression $HP$ of rank and nilpotency class at most $C_{d,\eps}$ in $C_{d,\eps}$-normal form, and a finite subset $X$ of $G$ of cardinality at most $C_{d,\eps}$ containing the identity, such that
\begin{equation}\label{hpn-mes}
 |HP| \leq C_{d,\eps} \| \mu^{*n} \|_{\ell^2(G)}^{-2} 
\end{equation}
and such that
\begin{equation}\label{sdt-mes}
\int_{G \backslash E} \| x\|_{HP,X}^2\ d\mu(x) \leq \frac{C_{d,\eps}}{n}
\end{equation}
for some exceptional set $E$ with
\begin{equation}\label{excep}
\mu(E) \leq \frac{C_{d,\eps}}{n^{1-\eps}}.
\end{equation}
Here $C_{d,\eps}$ is a quantity depending only on $d$ and $\eps$.
\end{theorem}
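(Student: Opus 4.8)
The plan is to deduce the measure statement from the set statement, Theorem~\ref{gpg}, by replacing $\mu$ with a finite symmetric set $A$ of size comparable to $\|\mu^{*n}\|_{\ell^2(G)}^{-2}$ that absorbs all but an exceptional portion of the mass of $\mu$ and that grows polynomially at scale $n$. Write $K_m := \|\mu^{*m}\|_{\ell^2(G)}^{-2}$; the first step is to record the elementary consequences of \eqref{anda}. By Young's inequality $m \mapsto K_m$ is non-decreasing, so $K := K_1 \le K_m \le K_{m'} \le K_n \le n^d K$ for $1 \le m \le m' \le n$; in particular all of the $K_m$ with $1 \le m \le n$ — and also the quantities $\|\mu^{*m}\|_{\ell^\infty(G)}^{-1}$, which lie between $K_m$ and $K_{2\lceil m/2\rceil}$ — agree up to a factor $n^{O(d)}$. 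These comparabilities are the only way \eqref{anda} will enter, and they are what make a dyadic pigeonholing viable with only a logarithmic loss (which will be absorbed into the $n^\eps$ slack once $n$ is large).

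Next, fix a small constant $c_d > 0$ (chosen so that the scale $C_d n$ later produced by Theorem~\ref{gpg} stays $\le n$) and set $m := \lfloor c_d n \rfloor$. Decompose $\mu^{*m}$ into dyadic level sets $A_j := \{x : 2^{-j-1} < \mu^{*m}(x) \le 2^{-j}\}$. From $K_m^{-1} = \sum_x \mu^{*m}(x)^2 \asymp \sum_j 2^{-2j}|A_j|$, the $\ell^\infty$ bound, and the crude tail bound $\sum_{j > J} 2^{-2j}|A_j| \le 2^{-J}$, only $O(d \log n)$ values of $j$ carry an appreciable share of $K_m^{-1}$; pigeonholing selects one level $j^*$ with $2^{-2j^*}|A_{j^*}| \gg \frac{1}{(d\log n) K_m}$, and that level automatically has $2^{-j^*} \asymp_{n^{O(d)}} \|\mu^{*m}\|_{\ell^\infty(G)} \asymp_{n^{O(d)}} K_m^{-1}$, so the symmetric set $A := A_{j^*} \cup A_{j^*}^{-1} \cup \{1\}$ satisfies $n^{-C_d} K_n \ll |A| \ll (d \log n)\, K_n$.

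The crux is to show $A$ has polynomial growth at scale $n$, say $|A^n| \le n^{C_d} |A|$, so that Theorem~\ref{gpg} applies. The naive single-path bound $\mu^{*km}(x) \ge 2^{-kj^*}$ valid on $A^k$ only yields the useless exponential estimate $|A^k| \le 2^{kj^*}$; instead one argues through second moments, comparing $A^k$ with the $\mu^{*km}$-weighted support and combining a Plünnecke–Ruzsa-type inequality with the comparability $K_{km} \asymp_{n^{O(d)}} K_m$ (valid for $km \le n$, i.e.\ $k \le 1/c_d$) to get $|A^k| \le n^{O(d)}|A|$ for $k$ up to $\sim 1/c_d$; taking $c_d$ small and iterating controls $|A^n|$, and in fact pins $|A^{C_d n}| \asymp_{n^{O(d)}} K_n$, so that $A$ is already "saturated" at scale $n$. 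The contrapositive of the same estimates — too much $\mu$-mass sitting below the chosen threshold would, applying the comparabilities of the first step to $\mu$ restricted to that region, spread $\mu^{*n}$ out by more than the factor $n^d$ allowed by \eqref{anda} — shows $\mu(G \setminus A) \le C_d n^{-(1-\eps)}$, so we may take $E := G \setminus A$ and obtain \eqref{excep}.

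Finally, apply Theorem~\ref{gpg} to $A$ with exponent $d' = C_d$: this yields a coset nilprogression $HP$ of rank and nilpotency class at most $C_d$ in $C_d$-normal form and a set $X$ with $1 \in X$, $|X| \le C_d$, such that $HP \subset A^{C_d n}$ and $A \subset \{g : \|g\|_{HP,X} \le C_d / n\}$. The first inclusion with $|A^{C_d n}| \asymp_{n^{O(d)}} K_n$ gives $|HP| \le C_{d,\eps} K_n$, that is \eqref{hpn-mes}; and since $G \setminus E = A \subset \{g : \|g\|_{HP,X} \le C_d/n\}$ we get $\int_{G \setminus E} \|x\|_{HP,X}^2\, d\mu(x) \le (C_d/n)^2 \le C_{d,\eps}/n$, which is \eqref{sdt-mes}. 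The main obstacle is the polynomial-growth step of the previous paragraph: all the obvious pointwise bounds on $|A^k|$ are off by exponential factors, so \eqref{anda} must be used quantitatively, and the tension between \eqref{hpn-mes} (which forces $A^{C_d n}$, hence $HP$, to have size only $O(K_n)$ rather than the a priori $n^{O(d)} K_n$) and \eqref{sdt-mes} (which forces $HP$ to be large enough that its $(C_d/n)$-dilate still controls $A$) is precisely what dictates the choice of the convolution power $m$, the dyadic level $j^*$, and the exceptional set $E$.
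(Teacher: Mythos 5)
The reduction to Theorem~\ref{gpg} cannot yield \eqref{hpn-mes}: the coset nilprogression produced by applying the set theorem to a level set $A$ of $\mu^{*m}$ is forced to be too large by a factor of about $n^{O(d)}$, and this cannot be absorbed into the $n^\eps$ slack, which lives only in \eqref{excep}. Concretely, take $\mu$ to be a discretised standard Gaussian on $\Z$, so that $\|\mu\|_{\ell^2}^{-2} = O(1)$ and $\|\mu^{*n}\|_{\ell^2}^{-2} \asymp \sqrt n$; then \eqref{anda} holds with any $d>1/2$ and \eqref{hpn-mes} demands $|HP| \ll \sqrt n$. Your level set $A$ is essentially an interval of length $\asymp\sqrt m \asymp \sqrt n$, and the second conclusion of Theorem~\ref{gpg}, namely $A \subset \{g : \|g\|_{HP,X} \le C_d/n\}$, forces $HP$ to contain an interval of radius $\gg n\sqrt n$, hence $|HP|\gg n^{3/2}$ --- a factor of $n$ too large. (Taking $m=O(1)$ so that $A$ has size $O(1)$ only moves the mismatch: then $|HP|\gg n\gg\sqrt n$.) The passage ``$|HP| \le |A^{C_d n}| \asymp_{n^{O(d)}} K_n$ gives $|HP| \le C_{d,\eps} K_n$'' quietly drops a polynomial-in-$n$ factor. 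The obstruction is intrinsic: supports spread \emph{linearly} under iterated products while $\mu^{*n}$ diffuses in $\ell^2$ at rate $\sqrt n$, so the set inverse theorem, which pins $A$ inside a $(C_d/n)$-shrink of $HP$, is the wrong tool; the measure theorem correctly asks only for the $L^2(\mu)$ bound $\int \|x\|_{HP,X}^2\,d\mu \ll 1/n$, which permits typical steps of size $\asymp 1/\sqrt n$.

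Two other steps are also unsupported. Your claim $|A^n|\le n^{C_d}|A|$: the first- and second-moment bounds give $|A^k|\le 2^{k(j^*+1)}$ and $|A^k|\le 2^{2k(j^*+1)}K_{km}^{-1}$, both exponential in $k$ once $j^*\gg1$, and your invocation of a ``Pl\"unnecke--Ruzsa-type inequality'' would already require a bound of the form $|A^2|\ll|A|$, which is nowhere established; a dyadic level set of a smooth density has no a priori approximate-group structure, and extracting such structure from an $\ell^2$ hypothesis is precisely the content of the Balog-Szemer\'edi-Gowers theorem. Likewise, $\mu(G\setminus A_{j^*})\ll n^{-(1-\eps)}$ for a \emph{single} pigeonholed dyadic level is not justified: the support of $\mu$ is spread across $O(d\log n)$ dyadic levels of $\mu^{*m}$ near the peak, and one chosen level need not carry almost all the $\mu$-mass. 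The paper avoids all of this by applying Balog-Szemer\'edi-Gowers (together with Proposition~\ref{npr}) directly to $\mu^{*n_0}$ to produce an $HP$ of the correct size with $\sup_x \mu^{*10n_0}(xHP)\gg 1$, and then controlling the support of $\mu$ through the right-invariant $\ell^2$ semi-metric $d(g,h):=\|\nu*\delta_g-\nu*\delta_h\|_{\ell^2}/\|\mu^{*n_1}\|_{\ell^2}$, for which the near-plateau of $m\mapsto\|\mu^{*m}\|_{\ell^2}$ yields $\int d(g,1)^2\,d\mu(g)\ll 1/n_0$.
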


We prove this theorem in Section \ref{inv-mes-grow}.  Given that Theorem \ref{gpg} does not require symmetry, it is reasonable to expect that some form of Theorem \ref{gpg-mes} can also be established for non-symmetric $\mu$, but we do not pursue this issue here. Some exceptional set $E$ must be permitted in the above theorem; indeed, given a symmetric probability measure $\mu$, one can consider the modified measure $\mu' := (1-\delta) \mu + \delta \nu$ for an arbitrary probability measure $\nu$ and a small $\delta>0$, and then
$$ \| (\mu')^{*n} \|_{\ell^2}^{-2} \leq (1-\delta)^{-2n} \| \mu^{*n} \|_{\ell^2}^{-2}$$
which shows that one can modify $\mu$ more or less arbitrarily on a set of measure $O(1/n)$ without significantly increasing the quantity $\|\mu^{*n}\|_{\ell^2}^{-2}$.  However, the $n^\eps$ loss in \eqref{excep} is somewhat undesirable (as we shall see, it matches the $n^\eps$ loss in Theorem \ref{ilot}), and it may be possible to remove it at the cost of making the conclusion more complicated.

For the corresponding direct theorem, we have the following result, which is a converse to the above theorem if we remove the exceptional set $E$.

\begin{theorem}[Direct theorem for polynomial growth of measures]\label{forward}   Let $\mu$ be a discrete symmetric probability measure on a group $G$.  Let $HP$ be a coset nilprogression in $G$ of rank and nilpotency class bounded by $M$, in $M$-normal form, let $X$ be a non-empty set of cardinality at most $M$, and suppose that
$$
 \int_{g \in G} \|g\|_{HP,X}^2\ d\mu(g) \leq \frac{M}{n}.
$$
Then one has
$$ \| \mu^{*n} \|_{\ell^2}^{-2} \leq C_M |HP |$$
for some quantity $C_M$ depending only on $M$. 
\end{theorem}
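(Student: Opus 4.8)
The plan is to derive the desired $\ell^2$ lower bound on $\mu^{*n}$ from a concentration estimate for the random walk driven by $\mu$, and to prove that estimate by a second--moment computation that extracts a full power of $n$ from the symmetry of $\mu$. Throughout, $C_M$ denotes a positive quantity depending only on $M$ that may change from line to line. \emph{Step 1 (reduction to concentration).} It suffices to produce such a $C_M$ with
\[
\mu^{*n}(S)\ \geq\ \tfrac12,\qquad S:=\bigl\{g\in G:\ \|g\|_{HP,X}\leq C_M\bigr\}.
\]
Indeed, since $1\in X$ and $\operatorname{Sym}(X)$ is finite, the definition of $\|\cdot\|_{HP,X}$ shows $S\subset X\cdot HP^{2C_M}$, and the volume bound in the definition of $M$--normal form gives $|HP^{2C_M}|\leq C_M|HP|$, hence $|S|\leq C_M|HP|$; then $\sum_{g\in S}\mu^{*n}(g)\geq\tfrac12$ and Cauchy--Schwarz yield
\[
\|\mu^{*n}\|_{\ell^2}^{2}\ \geq\ \frac{1}{|S|}\Bigl(\sum_{g\in S}\mu^{*n}(g)\Bigr)^{2}\ \geq\ \frac{1}{4C_M|HP|},
\]
which is the assertion of the theorem.

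\emph{Step 2 (truncating large steps).} Set $\lambda_0:=2\sqrt M$. By Chebyshev, $\mu(\{\|g\|_{HP,X}>\lambda_0\})\leq(M/n)\lambda_0^{-2}=1/(4n)$, so if $g_1,\dots,g_n$ are i.i.d.\ samples from $\mu$ then with probability at least $(1-\tfrac1{4n})^n\geq\tfrac34$ every $g_i$ satisfies $\|g_i\|_{HP,X}\leq\lambda_0$; conditioning on this event replaces $\mu$ by the measure $\nu$ obtained from $\mu$ by restriction to $\{\|g\|_{HP,X}\leq\lambda_0\}$ and renormalisation. This $\nu$ is again symmetric (the truncation set is symmetric under $g\mapsto g^{-1}$), is supported on $\{\|g\|_{HP,X}\leq\lambda_0\}$, and obeys $\int\|g\|_{HP,X}^2\,d\nu\leq 2M/n$. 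Thus Step~1 reduces, by one more application of Chebyshev, to the following \emph{Key estimate:} with $HP$, $X$, $M$ as in the theorem, if $\nu$ is a symmetric probability measure supported on $\{g:\|g\|_{HP,X}\leq 2\sqrt M\}$ with $\int\|g\|_{HP,X}^2\,d\nu\leq 2M/n$, then $\int\|g\|_{HP,X}^2\,d\nu^{*n}(g)\leq C_M$. Granting this with some constant $C_M$, taking the constant in Step~1 to be $2\sqrt{C_M}$ makes $\nu^{*n}(G\setminus S)\leq\tfrac14$, and combining with the probability $\geq\tfrac34$ that no step is truncated gives $\mu^{*n}(S)\geq\tfrac9{16}>\tfrac12$.

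\emph{Step 3 (proof of the Key estimate --- the main point).} The triangle inequality for $\|\cdot\|_{HP,X}$ only gives the bound $\int\|g\|_{HP,X}^2\,d\nu^{*n}\leq 2Mn$ (via $\|g_1\cdots g_n\|_{HP,X}^2\leq n\sum_i\|g_i\|_{HP,X}^2$), which loses exactly the factor of $n$ that symmetry must recover. One natural route is to ``collect'' the product $g_1\cdots g_n$ separately along each $x\in X$: successively writing $g_k\cdot z=\sigma_{g_k}(z)\cdot p$ with $\sigma_{g_k}\in\operatorname{Sym}(X)$ a minimiser for $\|g_k\|_{HP,X}$ (so $p\in HP^{\|g_k\|_{HP,X}}$) produces, for each $x$, an identity
\[
\Sigma(x)^{-1}(g_1\cdots g_n)\,x\ =\ p_1(x)\,p_2(x)\cdots p_n(x),\qquad \Sigma:=\sigma_{g_1}\cdots\sigma_{g_n}\in\operatorname{Sym}(X),
\]
where $p_k(x)\in HP^{\|g_k\|_{HP,X}}$ depends only on $g_k,g_{k+1},\dots,g_n$; hence $\|g_1\cdots g_n\|_{HP,X}\leq\max_{x\in X}\|p_1(x)\cdots p_n(x)\|_{HP}$. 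Choosing the minimisers consistently so that $\sigma_{g^{-1}}=\sigma_g^{-1}$, the symmetry of $\nu$ makes $(p_k(x))_k$ a centred, backward--adapted difference sequence. It then remains to carry out a computation inside the nilprogression underlying $HP$: equip it with a Mal'cev--type coordinate system $g\mapsto(c_1(g),\dots,c_r(g))$ adapted to the normal form, under which $\|g\|_{HP}$ is comparable to $\max_j(|c_j(g)|/N_j)^{1/w_j}$ for integer weights $1\leq w_j\leq M$ (the commutator corrections to this being of lower order for dilates of bounded size). Expanding the coordinates of $p_1(x)\cdots p_n(x)$ as polynomials, of degree at most the nilpotency class, in the coordinates of the $p_k(x)$: the degree--one parts contribute, by orthogonality of a centred adapted sequence, a variance $\sum_k\E\bigl[(\text{$j$-th coordinate of }p_k(x))^2\bigr]\leq C_M N_j^2\sum_k\E[\|g_k\|_{HP,X}^2]\leq C_M N_j^2$, while the higher--degree (commutator) terms are sums of products of such quantities, also bounded by $C_M N_j^2$ after summation once one uses the uniform bound $\|g_k\|_{HP,X}\leq 2\sqrt M$ from Step~2. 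Comparing $\|\cdot\|_{HP}$ with these coordinates and using Jensen's inequality (legitimate since $w_j\geq1$) gives $\int\|g\|_{HP,X}^2\,d\nu^{*n}\leq C_M\sum_j\bigl(\E[c_j^2]/N_j^2\bigr)^{1/w_j}\leq C_M$, the Key estimate.

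The main obstacle is Step~3: turning the heuristic ``a symmetric random product inside a coset nilprogression in normal form behaves like a centred random walk on a nilpotent Lie group, and hence enjoys square--root cancellation relative to the triangle--inequality bound'' into a rigorous second--moment bound. Two points require care: (i) the increments $p_k(x)$ must be kept genuinely separate along the $|X|$ orbits, and one must verify --- this is where the consistent choice $\sigma_{g^{-1}}=\sigma_g^{-1}$ and the symmetry of $\nu$ enter --- that every first--order drift term actually cancels (the dihedral group of Example~\ref{sip} already displays precisely this cancellation mechanism); and (ii) the dilation structure $HP^{t}$ is only approximately homogeneous, with commutator corrections that are negligible here only because the target norm has bounded size. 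As an alternative to the self--contained bookkeeping above, one could instead deduce the Key estimate from a quantitative central limit theorem for random walks on nilpotent Lie groups together with the Lie--group model of $HP$ furnished by the proof of Theorem~\ref{bgt-thm}.
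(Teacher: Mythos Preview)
Your Steps 1 and 2 are sound and match the paper's own reductions. The gap is in Step~3, at the assertion that ``the symmetry of $\nu$ makes $(p_k(x))_k$ a centred, backward--adapted difference sequence.'' Write, as the paper does, $p_{ij}:=\P(\sigma(i)=j)$ and $a_{ij}:=\E[\log\phi(h_i)\mid\sigma(i)=j]$. Your consistency choice $\sigma_{g^{-1}}=\sigma_g^{-1}$ together with the symmetry of $\nu$ yields only $p_{ij}=p_{ji}$ and $a_{ij}=-a_{ji}$. Conditioning on $g_{k+1},\dots,g_n$ fixes the position $z_k\in X$, and then $\E[\log\phi(p_k(x))\mid g_{k+1},\dots,g_n]=\sum_j p_{z_k j}a_{z_k j}=:d_{z_k}$. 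Antisymmetry forces $\sum_z d_z=0$ but \emph{not} $d_z=0$ for each $z$, so $(p_k(x))_k$ is not a martingale difference sequence and the orthogonality you invoke for the degree-one terms is unavailable. The dihedral example you cite is deceptively benign: there the walk on $X$ is deterministic and the drifts vanish identically; in general they do not.

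This is exactly the obstruction the paper isolates and removes via a \emph{gauge change}: one solves the Poisson-type system $d_i=t_i-\sum_j p_{ij}t_j$ with $t_i=O(1)$ (Lemma~\ref{stas}, iterated through the lower central series in Corollary~\ref{jay}) and replaces each $x_i$ by $x_i\exp(t_i)$, after which the new drifts vanish identically. Solvability with bounded $t_i$ genuinely requires the refined bound $a_{ij}=O(p_{ij}^{-1/2}n^{-1/2})$ coming from Cauchy--Schwarz on the hypothesis, not merely antisymmetry. Once this gauge change is made your second-moment computation does close, and would give a legitimate alternative to the paper's own endgame (which instead builds a smooth bump $\Psi$ adapted to the gauge-changed $X$, shows $\|\mu*\Psi-\Psi\|_{\ell^2}\ll n^{-1}\|\Psi\|_{\ell^2}$, and iterates). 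But as written, Step~3 does not close: you must either perform the gauge change or, equivalently, run a Poisson-equation/Markov-chain-CLT argument on the sequence $(d_{Z_k})_k$ to control the accumulated first-order term.
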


We prove this result in Section \ref{forward-proof}; it is essentially a quantitative analysis of random walks on virtually nilpotent groups.  Under additional hypotheses on $X$ and $\mu$, it is possible that one could (in the spirit of \cite{b}) obtain some sort of ``central limit theorem'' that describes $\mu^{*n}$ more precisely, but we do not pursue this issue here.

When the group $G = (G,+)$ is abelian, one can use Theorem \ref{gpg-mes} to recover the inverse Littlewood-Offord theorem in Theorem \ref{ilot}; we do this in Section \ref{ilo-ab}.  

Now we turn to the question of obtaining non-abelian analogues of Littlewood-Offord theory.  This question was recently investigated by Tiep and Vu \cite{tiep}.  We mention just one of their main results:

\begin{theorem}\label{vp}  Let $m,n,s \geq 2$ be integers, and let $A_1,\dots,A_n$ be matrices in $\SL_m(\C)$, each of which has order at least $s$.  Let $\hat A_1,\dots,\hat A_n$ be the independent random matrices selected by choosing $\hat A_i$ to equal $A_i$ or $A_i^{-1}$ with equal probability, for each $i=1,\dots,n$.  Then
$$ \sup_{B \in \SL_m(\C)} \P( \hat A_1 \dots \hat A_n = B) \leq \frac{141}{\min( s, \sqrt{n} )}.$$
\end{theorem}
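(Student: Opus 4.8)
The plan is to derive this from the symmetrised framework developed earlier, specifically by reducing to the measure-theoretic inverse theorem (Theorem \ref{gpg-mes}) applied to a suitable symmetric measure. First I would set $\mu$ to be the symmetric probability measure $\mu := \frac1n \sum_{i=1}^n \frac12(\delta_{A_i} + \delta_{A_i^{-1}})$ on $G = \SL_m(\C)$, so that $\mu^{*n}$ is (up to a known loss) comparable to the distribution of the random product $\hat A_1 \cdots \hat A_n$ we wish to control; more precisely, the random word $W := \hat A_1 \cdots \hat A_n$ can be compared with an $n$-step random walk driven by $\mu$, and one has a bound of the shape $\sup_B \P(W = B) \ll \|\mu^{*n}\|_{\ell^\infty}^{1/2} \cdot (\text{something})$, or more robustly one passes through $\|\mu^{*n}\|_{\ell^2}$ using the fact that a symmetric measure satisfies $\|\mu^{*n}\|_{\ell^\infty} \le \|\mu^{*\lfloor n/2\rfloor}\|_{\ell^2}^2$. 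The target bound $\frac{1}{\min(s,\sqrt n)}$ suggests working with $\rho := \sup_B \P(W=B)$ and assuming for contradiction that $\rho$ is much larger than $1/\min(s,\sqrt n)$, i.e. $\rho \ge C/\sqrt n$ with $C$ large and also $\rho \ge C/s$.

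The second step is to feed the hypothesis $\rho \gg 1/\sqrt n$ into the inverse machinery. Largeness of $\rho$ forces $\|\mu^{*n}\|_{\ell^2}^{-2} \ll n \ll n \cdot \|\mu\|_{\ell^2}^{-2}$ (since $\|\mu\|_{\ell^2}^{-2}$ is bounded below by the support size, which is at most $2n$, but more to the point one gets a polynomial growth bound with $d = O(1)$), so Theorem \ref{gpg-mes} applies with some fixed $d, \eps$ and produces a coset nilprogression $HP$ of bounded rank and nilpotency class in normal form, together with a bounded set $X$, such that $|HP| \ll \|\mu^{*n}\|_{\ell^2}^{-2} \ll \rho^{-1}$ and such that all but an $O(n^{-1+\eps})$-fraction of the mass of $\mu$ sits inside $\{ g : \|g\|_{HP,X} \le C/n\}$. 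Translating back, all but $O(n^\eps)$ of the generators $A_i^{\pm 1}$ satisfy $\|A_i\|_{HP,X} \le C/n$, and the $\|\cdot\|_{HP,X}$-triangle inequality then forces each such $A_i$ (conjugated through an element of the bounded set $X$) to lie in a bounded dilate $HP^{O(1)}$, which is itself a coset nilprogression of bounded rank and class; in particular the subgroup it generates (after extracting the finite normal part $H$) is virtually nilpotent of bounded complexity.

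The third step is where the order hypothesis $A_i$ of order $\ge s$ enters, and this is the main obstacle. An element lying in a nilprogression $P(u_1,\dots,u_r; N_1,\dots,N_r)$ in $C$-normal form with bounded rank $r$ and bounded $C$ cannot have small order unless the $N_i$ are themselves not too small: local properness (ii) together with the upper-triangular structure (i) shows that the cyclic group generated by an element of small $\|\cdot\|_P$-norm, say $g \in P^{1/k}$, is ``spread out'' enough that $g$ has order $\gg$ some power of the smallest $N_i$ — intuitively, an element of a progression of step-sizes $N_i$ has order at least a fixed power of $\min_i N_i$ unless that $N_i$ is $O(1)$. Since on the other hand $|HP| \ll \rho^{-1}$ and $|HP|$ is comparable to $|H| \prod (2\lfloor N_i \rfloor + 1)$, each $N_i$ is at most $O(\rho^{-1/r}) = O(\rho^{-O(1)})$; combining, any $A_i$ with $\|A_i\|_{HP,X}$ small has order $\ll \rho^{-O(1)}$, so $s \ll \rho^{-O(1)}$, i.e. $\rho \gg s^{-O(1)}$. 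The delicate point is to get the exponent to be exactly $1$ rather than some larger constant — this is why Tiep and Vu obtain the clean bound $1/\min(s,\sqrt n)$, and matching it would require a careful, rather than crude, analysis of orders of elements in bounded-rank normal-form nilprogressions (essentially: in such a progression the order of $g \in P$ is at least a constant times $\|g\|_P^{-1}$ times the relevant $N_i$, with the linear, not merely polynomial, dependence). Granting this sharp order bound, one chains the inequalities $\rho \gg s^{-1}$ and $\rho \gg n^{-1/2}$ together with the covering bound (accounting for the $O(n^\eps)$ exceptional generators, which only affect $\rho$ by a multiplicative factor that is absorbed for $n$ large) to contradict $\rho \ge C/\min(s,\sqrt n)$ once $C$ exceeds the absolute constant $141$; tracking the constants through the random-walk comparison and the order bound is then the only remaining (bookkeeping) task.
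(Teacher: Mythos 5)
There is a fundamental misreading here: Theorem~\ref{vp} is not proved in this paper at all. It is a cited result of Tiep and Vu, stated verbatim for comparison, and the paper is explicit that its own methods \emph{cannot} reproduce it: ``While our methods cannot recover this type of result exactly, we can obtain the following related result involving a `symmetrised' form of the Littlewood-Offord problem'' (namely Theorem~\ref{mam}). The discussion after Theorem~\ref{mam} lists the three ways the paper's conclusion falls short of Theorem~\ref{vp}: the constant is inexplicit, the control is only over the i.i.d.\ random walk $A'_1\cdots A'_n$ rather than the ordered product $\hat A_1\cdots\hat A_n$, and an extra lower bound on $s$ (of the form $s\geq Cn^{1/4}$, or $Cn^{c}$ for any fixed $c>0$) is needed.

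The unrepairable gap in your proposal is the very first step, where you assert that ``the random word $W:=\hat A_1\cdots\hat A_n$ can be compared with an $n$-step random walk driven by $\mu$,'' with $\mu:=\frac{1}{2n}\sum_i(\delta_{A_i}+\delta_{A_i^{-1}})$. These are genuinely different distributions: $W$ has law $\mu_1*\cdots*\mu_n$ with $\mu_i=\frac12(\delta_{A_i}+\delta_{A_i^{-1}})$, whereas the random walk has law $\mu^{*n}$, and there is no general non-abelian inequality relating $\sup_B(\mu_1*\cdots*\mu_n)(\{B\})$ to $\|\mu^{*n}\|_{\ell^\infty}$. The paper does prove such a comparison in Proposition~\ref{donk}, but the argument there is Fourier-analytic (pointwise estimates on $\hat\mu_j(\xi)$, AM--GM over $j$, then re-exponentiation) and is confined to the abelian setting of Section~\ref{ilo-ab}; it has no known nonabelian substitute, which is exactly why the paper's nonabelian conclusion (the display after Theorem~\ref{mam}) is phrased for $A'_1\cdots A'_n$ and not $\hat A_1\cdots\hat A_n$. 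Even setting this aside, the explicit constant $141$ cannot emerge from any argument that routes through Theorem~\ref{gpg-mes}, since that theorem ultimately rests on Theorem~\ref{bgt-thm}, whose bounds are ineffective (the reliance on Hilbert's-fifth-problem machinery is noted right after Theorem~\ref{bgt-thm}). And you also would need the restriction $s\gg n^{c}$, which the Tiep--Vu theorem does not require; your third step does nothing to remove it. So the approach cannot prove the stated theorem; at best, after fixing the first step to track the i.i.d.\ walk, it reproduces the weaker consequence already recorded in the paper below Theorem~\ref{mam}.
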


While our methods cannot recover this type of result exactly, we can obtain the following related result involving a ``symmetrised'' form of the Littlewood-Offord problem, proven in Section \ref{app-sec}:

\begin{theorem}\label{mam}  Let $n \geq 2$ and $0 < \eps \leq 1$.  Let $A_1,\dots,A_n$ be elements of a group $G = (G,\cdot)$, and let $A'_1,\dots,A'_m$ be the independent \emph{identically distributed} random variables with each $A'_i$ selected to equal one of $A_1,\dots,A_n,A_1^{-1},\dots,A_n^{-1}$ with equal probability.  If
\begin{equation}\label{mod}
 \sup_{B \in G} \P( A'_1 \dots A'_n = B) > \frac{1}{\eps \sqrt{n}}
\end{equation}
and $n \geq C/\eps^4$ for a sufficiently large absolute constant $C$, then there exists a finite subgroup $H$ of $G$ of order at most $C \eps \sqrt{n}$ which contains at least $(1-C\eps^2) n$ of the $A_i$.
\end{theorem}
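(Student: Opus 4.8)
The plan is to apply Theorem~\ref{gpg-mes} to the symmetric probability measure
\[
\mu := \frac{1}{2n}\sum_{i=1}^n\bigl(\delta_{A_i}+\delta_{A_i^{-1}}\bigr),
\]
for which $A'_1\cdots A'_n$ is distributed as $\mu^{*n}$, so that the left side of \eqref{mod} equals $\rho:=\|\mu^{*n}\|_{\ell^\infty(G)}$. The one point that really matters is that Theorem~\ref{gpg-mes} should be invoked not at time $n$ but at time $k:=\lfloor n/2\rfloor$. Since $\|\mu^{*m}\|_{\ell^\infty}$ is non-increasing in $m$, since $2k\le n$, and since $\mu^{*2k}$ is symmetric (so that $\mu^{*2k}(x)=\sum_y\mu^{*k}(y)\mu^{*k}(x^{-1}y)\le\|\mu^{*k}\|_{\ell^2}^2=\mu^{*2k}(1)$ for all $x$ by Cauchy--Schwarz), we have
\[
\rho=\|\mu^{*n}\|_{\ell^\infty}\le\|\mu^{*2k}\|_{\ell^\infty}=\mu^{*2k}(1)=\|\mu^{*k}\|_{\ell^2}^2 ,
\]
hence $\|\mu^{*k}\|_{\ell^2}^{-2}\le\rho^{-1}<\eps\sqrt n$ by \eqref{mod}. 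As $\|\mu\|_{\ell^2}^{-2}\ge1$ and $\eps\sqrt n\le k$ once $n$ is large, the hypothesis \eqref{anda} holds at time $k$ with $d=1$, and I would apply Theorem~\ref{gpg-mes} with this $d$ and a fixed value (say $1/2$) of its $\eps$-parameter, so that all its constants collapse into a single absolute constant $C_0$. This yields a coset nilprogression $HP$ of rank and nilpotency class at most $C_0$ in $C_0$-normal form, a set $X\ni1$ with $|X|\le C_0$, and an exceptional set $E$, satisfying
\[
|HP|\le C_0\|\mu^{*k}\|_{\ell^2}^{-2}\le C_0\rho^{-1}<C_0\,\eps\sqrt n,\qquad
\int_{G\setminus E}\|x\|_{HP,X}^2\,d\mu(x)\le\frac{C_0}{k},\qquad
\mu(E)\le\frac{C_0}{\sqrt k}.
\]

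The point of the time shift is that we already obtain the sharp bound $|HP|<C_0\eps\sqrt n$: applying Theorem~\ref{gpg-mes} directly at time $n$ would only give $|HP|\lesssim\rho^{-2}<\eps^2n$, which is far too weak once $n\ge C/\eps^4$. It then remains to pass from $HP$ to a genuine finite subgroup. By the volume bound~(iii) of $C_0$-normal form, each parameter $N_j$ of $HP$ obeys $N_j\le C_0|HP|<C_0^2\eps\sqrt n$, so with $\lambda:=(2C_0^2\eps\sqrt n)^{-1}$ we have $\lambda N_j<1$ for every $j$, whence $HP^\lambda=H$. Markov's inequality then gives
\[
\mu\bigl(\{x\in G:\|x\|_{HP,X}>\lambda\}\bigr)\le\mu(E)+\lambda^{-2}\!\int_{G\setminus E}\|x\|_{HP,X}^2\,d\mu(x)\le\frac{C_0}{\sqrt k}+O(\eps^2),
\]
and the hypothesis $n\ge C/\eps^4$ (for a suitable absolute $C$) makes the first term $\le\eps^2$ as well. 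Since $\|A_i\|_{HP,X}=\|A_i^{-1}\|_{HP,X}$, the left side equals $n^{-1}\#\{i:\|A_i\|_{HP,X}>\lambda\}$, so all but $O(\eps^2n)$ of the indices are \emph{good} in the sense that $\|A_i\|_{HP,X}\le\lambda$.

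For a good index $i$, the infimum defining $\|A_i\|_{HP,X}$ is attained (it runs over the finite group $\operatorname{Sym}(X)$) at some $\sigma_i$ with $\sigma_i(x)^{-1}A_ix\in HP^\lambda=H$ for all $x\in X$; equivalently, left translation by $A_i$ sends the left coset $xH$ to $\sigma_i(x)H$. In particular, writing $\mathcal C:=\{xH:x\in X\}$ for the (finite, size $\le C_0$) set of these cosets, left translation by any good $A_i$ maps $\mathcal C$ bijectively onto itself; hence so does left translation by any element of $H':=\langle A_i: i\text{ good}\rangle$, giving a homomorphism $\phi:H'\to\operatorname{Sym}(\mathcal C)$. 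Since $1\in X$ we have $H\in\mathcal C$, so any $g\in\ker\phi$ satisfies $gH=H$, i.e.\ $g\in H$; thus $\ker\phi\le H$ is finite with $|\ker\phi|\le|H|\le|HP|$, while $[H':\ker\phi]\le|\mathcal C|!\le C_0!$. Therefore $H'$ is a finite subgroup with $|H'|\le C_0!\,|HP|<C_0!\,C_0\,\eps\sqrt n$, and by construction it contains all but $O(\eps^2n)$ of the $A_i$. Taking the absolute constant $C$ in the statement large enough to dominate $C_0!\,C_0$, the implied constants in the two $O(\eps^2n)$ terms, and the threshold on $n$ needed for the application of Theorem~\ref{gpg-mes} at time $k$, finishes the argument (with $H=H'$).

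The only real obstacle is the one idea above: invoking Theorem~\ref{gpg-mes} at time $\lfloor n/2\rfloor$ and exploiting the return-probability identity $\|\mu^{*\lfloor n/2\rfloor}\|_{\ell^2}^2=\mu^{*2\lfloor n/2\rfloor}(1)\ge\rho$, which converts the concentration hypothesis \eqref{mod} into the sharp structural bound $|HP|\lesssim\rho^{-1}<\eps\sqrt n$ rather than the lossy $|HP|\lesssim\rho^{-2}$ one would get at time $n$. Everything downstream --- Markov's inequality, the elementary identity $HP^\lambda=H$ for $\lambda$ below $1/\max_jN_j$, and the coset-permutation argument replacing the coset nilprogression by an honest finite subgroup --- is routine.
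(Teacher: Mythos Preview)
Your proof is correct and follows essentially the same route as the paper: pass from $\|\mu^{*n}\|_{\ell^\infty}$ to $\|\mu^{*\lfloor n/2\rfloor}\|_{\ell^2}^{-2}<\eps\sqrt n$, apply Theorem~\ref{gpg-mes} with $d=1$, use the smallness of $|HP|$ to force $\|g\|_{HP}\le\lambda\Rightarrow g\in H$, and then exploit the permutation action on $\{xH:x\in X\}$. The only cosmetic difference is in the last step: the paper takes the finite subgroup to be the full stabilizer $H':=\{g\in G:gXH=XH\}$, which (since $1\in X$) is contained in $XH$ and hence has order at most $|X|\,|H|\le C_0^2\eps\sqrt n$, a slightly sharper constant than your $|X|!\,|H|$.
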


Note if $A_i$ are as in Theorem \ref{vp}, then none of the $A_i$ cannot be contained in any subgroup of order less than $s$, so from Theorem \ref{mam} applied in the contrapositive we see that
$$
 \sup_{B \in G} \P( A'_1 \dots A'_n = B) \leq \frac{C}{\min(s,\sqrt{n})}
$$
for some absolute constant $C$, at least in the regime when $s \geq C n^{1/4}$ (actually one can replace the exponent $1/4$ here by any other positive constant, as can be seen from the argument below).  Thus our theorem differs from Theorem \ref{vp} in that the constant $C$ is not explicit, and we can only control the random walk $A'_1 \dots A'_n$ as opposed to the ordered random product $\hat A_1 \dots \hat A_n$, and one needs some lower bound on $s$.

Let $\mu$ be a finitely supported symmetric probability measure on a group $G$.  By results of Gromov and Varopoulos (see e.g. \cite{woess}), it is known that $\|\mu^{*n}\|_{\ell^\infty(G)}$ decays at a rate $\gg n^{-d/2}$ if and only if $\mu$ is supported in a subgroup $G'$ of $G$ of polynomial volume growth of degree at most $d$ (thus $|S^n| \leq C_S n^d$ for all finite sets $S$ in $G$ and all $n$); see e.g. \cite{woess}.  By modifying the proof of Theorem \ref{mam}, we can obtain\footnote{We thank Emmanuel Breuillard for suggesting this variant.} a non-asymptotic variant of this result:

\begin{theorem}\label{mam-2}  Let $d$ be a natural number and let $\eps>0$.  Let $n$ be a natural number that is sufficiently large depending on $d,\eps$, and let $\mu$ be a symmetric probability measure on a group $G$.  Suppose that
$$ \| \mu^{*n} \|_{\ell^\infty(G)} \geq n^{-(d+1-\eps)/2},$$
then there exists a subgroup $G'$ of $G$ of polynomial volume growth of degree at most $d$ such that $\mu(G') \geq 1-\eps$.
\end{theorem}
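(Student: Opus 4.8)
The plan is to adapt the scheme used to deduce Theorem \ref{mam} from Theorem \ref{gpg-mes}, but targeting the polynomial-volume-growth conclusion rather than a finite subgroup. First I would record the relevant $\ell^2$-to-$\ell^\infty$ comparisons for convolutions: since $\mu$ is symmetric, $\|\mu^{*2k}\|_{\ell^\infty} = \|\mu^{*k}\|_{\ell^2}^2$, and more generally $\|\mu^{*n}\|_{\ell^\infty}$ is comparable (up to the monotonicity in $n$ from Young's inequality) to $\|\mu^{*\lfloor n/2\rfloor}\|_{\ell^2}^2$. Hence the hypothesis $\|\mu^{*n}\|_{\ell^\infty(G)} \geq n^{-(d+1-\eps)/2}$ translates, after passing from $n$ to a comparable value $n' \asymp n/2$, into a lower bound of the form $\|\mu^{*n'}\|_{\ell^2}^{-2} \leq (n')^{d+1-\eps'} \asymp (n')^{d'}$ for a suitable exponent, so that the polynomial-growth-of-measures hypothesis \eqref{anda} is met with $d$ replaced by a constant $d+1$ (or slightly less). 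This lets me invoke Theorem \ref{gpg-mes}: there is a coset nilprogression $HP$ of rank and nilpotency class $\leq C_{d,\eps}$ in normal form, a set $X$ of size $\leq C_{d,\eps}$ containing the identity, with $|HP| \leq C_{d,\eps}\|\mu^{*n'}\|_{\ell^2}^{-2}$, and $\int_{G\setminus E}\|x\|_{HP,X}^2\,d\mu(x) \leq C_{d,\eps}/n'$ for an exceptional set $E$ with $\mu(E) \leq C_{d,\eps}/(n')^{1-\eps}$.

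The key structural point is then to extract the desired subgroup $G'$ from $HP$ and $X$. Let $G''$ be the subgroup generated by $H$, the generators $v_1,\dots,v_r$ of $P$, and the elements of $X$; since $P$ is in normal form its generators lie in the normaliser of $H$ and generate a nilpotent quotient $G''/H$ of bounded rank and class, so $G''$ is virtually nilpotent, and in particular it has polynomial volume growth of some degree $D \leq C_{d,\eps}$ — but a priori $D$ could exceed $d$. To control the growth degree I would use the volume bound \eqref{hpn-mes}: the estimate $|HP| \leq C_{d,\eps}\|\mu^{*n'}\|_{\ell^2}^{-2} \leq C_{d,\eps}(n')^{d+1}$ together with the volume-bound axiom (iii) of normal form forces $\prod_i (2\lfloor tN_i\rfloor+1)$, i.e. the growth function $|HP^t|$, to be at most polynomial of controlled degree in $t$ up to scale $t \asymp n'$; combining this with the homogeneity-type inclusions of the form $HP^t \subset (\text{support of }\mu)^{O(tn')}$ (the measure analogue of \eqref{hpn}) pins down the volume growth degree of the relevant subgroup to be at most $d$, after discarding the torsion part $H$ which contributes nothing to growth degree. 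The measure bound $\mu(E) \leq C_{d,\eps}/(n')^{1-\eps} \leq \eps/2$ (valid once $n$ is large enough depending on $d,\eps$) plus $\int_{G\setminus E}\|x\|_{HP,X}^2\,d\mu \leq C_{d,\eps}/n' $ (so that, off a further set of small measure, $\|x\|_{HP,X}$ is bounded, hence $x \in XHP^{O(1)} \subset G''$) shows that $\mu$ assigns mass $\geq 1-\eps$ to $G'' = G'$, which completes the argument.

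The main obstacle I anticipate is the quantitative matching of exponents in the second paragraph: Theorem \ref{gpg-mes} only delivers a coset nilprogression whose rank and class are bounded by a constant depending on $d,\eps$, but the volume growth degree of the associated virtually nilpotent group is governed by the Bass–Guivarc'h formula in terms of the lower central series of $P$, which need not be $\leq d$ just because the rank is bounded. To get the sharp degree $d$ I will need to argue that any "extra" generators of $P$ that would push the growth degree above $d$ must in fact have $\mu$-mass essentially zero on the elements they are needed to capture — that is, I should run the extraction against a slightly larger exceptional set and show that the genuinely "$\mu$-heavy" part of $HP$ sits inside a sub-coset-nilprogression of growth degree $\leq d$. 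Handling this trimming carefully, and checking that throwing away the extra generators still leaves a set of $\mu$-measure at least $1-\eps$, is where the real work lies; the rest is bookkeeping with the norms $\|\cdot\|_{HP,X}$ and the normal-form axioms, entirely parallel to the deduction of Theorem \ref{mam}.
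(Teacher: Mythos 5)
Your high-level strategy is the right one and matches the paper's: pass from the $\ell^\infty$ hypothesis to an $\ell^2$ hypothesis via Young's inequality, apply Theorem \ref{gpg-mes} to obtain $HP$ and $X$, extract the virtually nilpotent group generated by $\langle HP\rangle$ and $X$, and then control its growth degree via the Bass–Guivarc'h formula. You also correctly identify the crux: a bound on the rank and nilpotency class of $HP$ does not by itself bound the Bass–Guivarc'h degree of $\langle HP\rangle$ by $d$, so some generators must be dealt with. You mark this step as ``where the real work lies'' and leave it unresolved, so the proposal as written has a genuine gap precisely there.

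Moreover, the mechanism you sketch for filling the gap is not the one that works. You propose a measure-theoretic trimming: argue that generators of $P$ that inflate the growth degree carry essentially no $\mu$-mass, and discard them by enlarging the exceptional set. The paper's resolution is instead purely numerical and does not touch $\mu$ at all beyond the conclusion of Theorem \ref{gpg-mes}. Concretely: the good set $F=\{g:\|g\|_{HP,X}\le C_{d,\eps}/\sqrt{n}\}$ has $\mu$-mass $\ge 1-\eps$, and deleting any generator $v_i$ with $N_i<C_{d,\eps}^{-1}\sqrt n$ leaves $F$ unchanged (such generators simply cannot be reached at scale $1/\sqrt n$). After this deletion, the upper-triangular axiom of $C$-normal form shows, by an easy induction on $j$, that the $j$-th term of the lower central series of $K:=\langle HP\rangle/H$ is generated by those $v_i$ with $N_i\gg n^{j/2}$. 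Bass–Guivarc'h then bounds the growth degree $D$ of $K$, and comparing $\prod_i N_i\gg n^{D}$ against the volume bound $\prod_i N_i\ll |HP|\le C_{d,\eps}\,n^{d+1-\eps}$ forces $D\le d$ once $n$ is large. The point you should internalize is that the ``extra'' generators are killed by the size constraint on $N_i$ coming from the volume bound together with the normal-form structure, not by a further appeal to $\mu$-mass; no enlarged exceptional set is needed, and $F$ itself already sits inside $\{g:gX\langle HP\rangle=X\langle HP\rangle\}$, which contains $\langle HP\rangle$ as a finite-index subgroup and therefore has the same growth degree.
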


We prove this result in Section \ref{app-sec}.

\subsection{Acknowledgments}

The author supported by a Simons Investigator grant, the James and Carol Collins Chair, the Mathematical Analysis \&
Application Research Fund Endowment, and by NSF grant DMS-1266164.  We thank Emmanuel Breuillard, Lam Pham, and Matthew Tointon for helpful comments and corrections, and Emmanuel Breuillard and Melvyn Nathanson for help with the references. We also thank the anonymous referee for many useful comments, corrections, and suggestions.

\subsection{Notation}

We will rely heavily on definitions, notations, and theorems from the paper \cite{bgt}, which also established Theorems \ref{bgt-thm} and \ref{exist}(i) above.  As such, some familiarity with that paper will probably be required in order to easily follow the arguments given here.

\section{Nonstandard analysis formulation}\label{nonst-sec}

To remove some of the ``epsilon management'' in the arguments, and also to more easily access some results from \cite{bgt} that are phrased in a nonstandard setting, we will convert the main results of our paper to a nonstandard formulation.  We will use the nonstandard framework based on a single non-principal ultrafilter $\alpha \in \beta \N \backslash \N$, as laid out in \cite[Appendix A]{bgt}, and will freely use the notation from that appendix in the sequel.  In particular, we have the asymptotic notation $X = O(Y)$, $X \ll Y$, or $X \gg Y$ when $|X| \leq CY$ for some standard $C$, and $X = o(Y)$ if one has $|X| \leq \eps Y$ for every standard $\eps>0$.  We say that a quantity $X$ is \emph{bounded} if $X=O(1)$, and write $X \asymp Y$ for $X \ll Y \ll X$.

We define a \emph{nonstandard group} (or \emph{internal group}) to be an ultraproduct $G = \prod_{{\mathfrak n} \to \alpha} G_{\mathfrak n}$ of (standard) groups $G_{\mathfrak n}$, a \emph{nonstandard finite set} (or \emph{internal finite set}) to be an ultraproduct $A = \prod_{{\mathfrak n} \to \alpha} A_{\mathfrak n}$ of (standard) finite sets $A_{\mathfrak n}$, and so forth.  Note that an internal finite set $A$ has an \emph{internal cardinality} $|A| = \lim_{{\mathfrak n} \to \alpha} |A_{\mathfrak n}|$, which is a nonstandard finite number.

We define an \emph{ultra approximate group} to be an ultraproduct $A = \prod_{{\mathfrak n} \to \alpha} A_{\mathfrak n}$ of (standard) sets $A_{\mathfrak n}|$, which are all $K$-approximate groups for some standard number $K$ independent of ${\mathfrak n}$.  Similarly, define an \emph{ultra coset nilprogression} to be an ultraproduct $HP = \prod_{{\mathfrak n} \to \alpha} H_{\mathfrak n} P_{\mathfrak n}$ of (standard) coset nilprogressions $H_{\mathfrak n} P_{\mathfrak n}$ whose rank and nilpotency class are bounded uniformly in ${\mathfrak n}$; thus $HP$ itself will have rank and nilpotency class which are standard natural numbers.  If $g \in G$, then $\|g\|_{HP}$ is well-defined as a nonstandard element of $[0,+\infty]$; similarly for $\|g\|_{HP,S}$ if $S$ is a nonstandard finite set.  We define the notion of an ultra nilprogression similarly (discarding the finite group $H$).

An ultra nilprogression $P(u_1,\dots,u_r; N_1,\dots,N_r)$ (with $N_1,\dots,N_r$ now nonstandard reals) is said to be in \emph{normal form} if it obeys the following axioms:
\begin{itemize}
\item[(i)] (Upper-triangular form) For every $1 \leq i < j \leq r$ and for all four choices of signs $\pm$ one has
$$ [u_i^{\pm 1}, u_j^{\pm 1}] \in P\left( u_{j+1},\dots,u_r; \frac{O( N_{j+1} )}{N_i N_j}, \dots, \frac{O( N_r )}{N_i N_j} \right).$$
\item[(ii)] (Local properness) The expressions $u_1^{n_1} \dots u_r^{n_r}$ are distinct as $n_1,\dots,n_r$ range over the nonstandard integers with $n_i = o(N_i)$ for $i=1,\dots,r$.
\item[(iii)] (Volume bound) One has
$$ |P| \asymp \prod_{i=1}^r (2\lfloor N_i\rfloor + 1).$$
\end{itemize}
An ultra coset nilprogression is said to be in normal form if its associated ultra nilprogression is in normal form.

Theorem \ref{gpg} then follows from (and is in fact equivalent to) the following nonstandard analysis statement.

\begin{theorem}[Inverse theorem for polynomial growth, nonstandard formulation]\label{gpg-nonst}  Let $A$ be an non-empty internally finite subset of a nonstandard group $G$, let $n$ be an unbounded natural number, and suppose that $|A^n| \leq n^{O(1)} |A|$.  Then there exists an ultra coset nilprogression $HP$ in normal form, and a finite subset $X$ of $G$ of bounded cardinality containing the identity, such that
\begin{equation}\label{hpn-2}
 HP \subset (A \cup \{1\} \cup A^{-1})^{O(n)}
\end{equation}
and such that $\|g\|_{HP,X} = O(1/n)$ for all $g \in A$.
\end{theorem}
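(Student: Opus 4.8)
The plan is to build the coset nilprogression directly at the scale $n$: pass to a bounded approximate group comparable with $A^n$ (using Theorem~\ref{exist}(ii) to upgrade polynomial growth at scale $n$ into bounded doubling at scale $n$), apply the approximate-group structure theorem, and then do the real work, which is the seminorm bound.

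\emph{Setting up.} After a routine symmetrisation — replace $A$ by $A\cup\{1\}\cup A^{-1}$ and check that the polynomial growth hypothesis persists up to constants (so, as $|A|\asymp|A\cup\{1\}\cup A^{-1}|$, we may assume $A$ is a symmetric neighbourhood of the identity) — Theorem~\ref{exist}(ii) gives $|A^{kn}|\le C_d^{\,k}|A^n|$ for every bounded $k$. In particular $A^n$ is symmetric, contains $1$, and has bounded tripling, so a bounded power $S:=(A^n)^{4}$ is a bounded approximate group with $|S|\asymp|A^n|$, $A^n\subseteq S$ and $S\subseteq A^{O(n)}$. The nonstandard form of Theorem~\ref{bgt-thm} from \cite{bgt}, applied to $S$, produces an ultra coset nilprogression $HP$ in normal form with $|HP|\asymp|A^n|$, with $S$ (hence $A^n$) covered by boundedly many left translates of $HP$, and — from the finer output of \cite{bgt} — with $HP\subseteq S^{O(1)}\subseteq A^{O(n)}$. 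The last inclusion is exactly \eqref{hpn-2}.

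\emph{The seminorm bound.} We must find a bounded $X\ni 1$ with $\|g\|_{HP,X}=O(1/n)$ for all $g\in A$. The input is that orbits of elements of $A$ stay controlled for a long time. By Theorem~\ref{exist}(ii), $A^{Cn}$ is commensurate with $A^n\asymp HP$ for every bounded $C$; a Ruzsa covering argument then gives $A^{Cn}\subseteq XHP^{O(1)}$ for some bounded set $X$, which we may take to contain $1$ since $1\in A^{Cn}$. Hence $g^k\in A^k\subseteq A^{Cn}\subseteq XHP^{O(1)}$ for every $g\in A$ and every $1\le k\le Cn$. The task is to convert this — ``the orbit $\{g^k:1\le k\le Cn\}$ lies in a bounded union of bounded dilates of cosets of $HP$'' — into $\|g\|_{HP,X}=O(1/n)$. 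After pigeonholing (most $k\le Cn$ put $g^k$ into a single coset $x_*HP^{O(1)}$) and passing to differences, this reduces to a statement of the form: if $h^j\in Q^{O(1)}$ for all $j$ up to an unbounded multiple of $n$, where $Q$ is a coset nilprogression in normal form comparable with $HP$, then $\|h\|_{Q,X'}=O(1/n)$ for a slightly enlarged bounded set $X'$. This is proved using the quantitative geometry of normal-form nilprogressions from \cite{bgt} — volumes of dilations, local properness, and the comparison of $\|h\|_Q$ with the sizes of the Malcev coordinates of $h$ — which forces the displacement-per-step of $h$ in the free directions of $Q$ to be $O(1/n)$, there being no cancellation along powers in those directions; the behaviour of $h$ on the finite group $H$ and on the finitely many $HP$-cosets indexed by $X$ is torsion-like and is absorbed by the infimum over permutations in \eqref{hps-def}, exactly as the reflection in Example~\ref{sip} has seminorm zero.

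\emph{Main obstacle.} The symmetrisation, the extraction of $S$, and the use of Theorem~\ref{bgt-thm} are routine. The genuine difficulty is the implication just displayed — turning a long controlled orbit into a $1/n$ bound on the permutation-twisted seminorm — which is where all the quantitative input sits and which requires both the fine structure theory of normal-form coset nilprogressions and a careful choice of the witnessing permutations. One can alternatively organise this step around the nonstandard Lie model of \cite{bgt}: $A^n$ is modelled by a box in a simply connected nilpotent Lie group $L$, the image of $g\in A$ is an $\ell\in L$ with $\ell^n$ lying in that box, whence $\ell$ lies in the $(1/n)$-dilate of the box (a Lie-theoretic scaling fact in Malcev coordinates adapted to the normal form), and the discrepancy between $G$ and $L$ — the finite, torsion-like part — is encoded by $X$ together with the finite group $H$. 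Either way, this is the crux of the argument.
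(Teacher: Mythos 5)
Your proposal has two genuine gaps, both of which are precisely the places where the paper's proof does real work.

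\textbf{The symmetrisation is not routine.} You assert that one may ``check that the polynomial growth hypothesis persists up to constants'' after replacing $A$ by $A' := A\cup\{1\}\cup A^{-1}$. There is no elementary reason why $|A^n|\leq n^{O(1)}|A|$ for a \emph{non-symmetric} $A$ should force $|(A')^n|\leq n^{O(1)}|A'|$; the paper flags exactly this point (``we are not assuming symmetry on $A$, and so we will have to temporarily allow for the possibility that $n_0$ is much less than $n$, and return to address this issue at the end of the argument''). The paper's proof is forced into a two-scale bootstrap: pigeonhole to find an unbounded $n_0\ll n$ with $|A^{100n_0}|\ll|A^{n_0}|$, prove the conclusion at scale $n_0$, and \emph{only then}, using the resulting inclusions $HP^m\subset (A')^{Cmn_0}\subset XHP^{C^2 m}$ and the growth of nilprogressions, deduce $|(A')^{100n}|\asymp|(A')^n|$ and rerun the whole argument at scale $n$. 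Your reduction to the symmetric case therefore presupposes a consequence of the theorem you are trying to prove. Everything after this point in your sketch, including the invocation of Theorem~\ref{exist}(ii), depends on $A$ being symmetric, so the circularity propagates.

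\textbf{The orbit/seminorm step is not set up correctly.} You take the orbit of $g\in A$ directly and ``pigeonhole and pass to differences.'' But an element $g\in A$ need not lie in $\langle HP\rangle$ at all --- it may genuinely permute the $X$-cosets --- and the pigeonhole only places $g^j$ in $HP^{O(1)}$ for a dense but structureless set of exponents $j$, not the full initial segment $1\leq j\leq Cn$ needed for the properness-based conversion to a $1/n$ bound. Moreover, this route does not clearly produce the quantity that \eqref{hps-def} actually measures, namely a single permutation $\sigma$ with $\|\sigma(x)^{-1}gx\|_{HP}=O(1/n)$ simultaneously for every $x\in X$. The paper's solution is to construct $\sigma_a$ explicitly from the action of $a$ on $\langle A^{n_0}\rangle/\langle HP\rangle$, introduce the Sanders semi-metric $d(x,y)=|Bx\,\Delta\,By|/|B|$ with $B=A^{n_2}$, normalise the representatives $X$ via a spanning-tree lemma (Lemma~\ref{loma}) so that $d(x,x')\asymp d_{\overline X}$, and then run the orbit argument on $h=\sigma_a(x)^{-1}ax$, which lies in $\langle HP\rangle$ by construction and for which $h^m\in HP^{O(1)}$ holds for \emph{all} $1\leq m\leq m_0\gg n_0$. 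Finally, the conversion from that orbit bound to $\|h\|_{HP}\ll 1/n_0$ relies on the $\N$-properness refinement of Proposition~\ref{npr}, which is not an off-the-shelf consequence of \cite{bgt} but is rederived in the paper from the simply-connected Lie model; your appeal to ``the quantitative geometry of normal-form nilprogressions from \cite{bgt}'' is gesturing in the right direction but elides the need for this strengthening.
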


Let us see how Theorem \ref{gpg-nonst} implies Theorem \ref{gpg}.  Suppose for contradiction that Theorem \ref{gpg} fails.  Carefully negating the quantifiers, we conclude that there exists $d>0$, a sequence $G_{\mathfrak n}$ of (standard) groups, finite non-empty subsets $A_{\mathfrak n}$ of $G_{\mathfrak n}$, and a sequence $n_{\mathfrak n}$ of natural numbers going to infinity such that
$$ |A_{\mathfrak n}^{n_{\mathfrak n}}| \leq n_{\mathfrak n}^d |A_{\mathfrak n}|$$
for all ${\mathfrak n}$, but such that for each ${\mathfrak n}$, there does \emph{not} exist a coset progression $H_{\mathfrak n}P_{\mathfrak n}$ in $G_{\mathfrak n}$ of rank and nilpotency class at most ${\mathfrak n}$ in ${\mathfrak n}$-normal form and a finite subset $X_{\mathfrak n}$ of $G_{\mathfrak n}$ of cardinality at most ${\mathfrak n}$ containing the identity such that
$$
 H_{\mathfrak n}P_{\mathfrak n} \subset (A_{\mathfrak n} \cup \{1\} \cup A_{\mathfrak n}^{-1})^{{\mathfrak n} n_{\mathfrak n}}
$$
and
$$
 A_{\mathfrak n} \cup \{1\} \cup A_{\mathfrak n}^{-1} \subset\left \{ g_{\mathfrak n} \in G_{\mathfrak n}: \|g_{\mathfrak n}\|_{H_{\mathfrak n}P_{\mathfrak n},X_{\mathfrak n}} \leq \frac{\mathfrak n}{n_{\mathfrak n}} \right\}.
$$
We now take ultraproducts, forming the nonstandard group $G := \prod_{{\mathfrak n} \to \alpha} G_{\mathfrak n}$ and the internally finite subset $A := \prod_{{\mathfrak n} \to \alpha} A_{\mathfrak n}$, and the nonstandard natural number $n := \lim_{{\mathfrak n} \to \alpha} n_{\mathfrak n}$.  By hypothesis, $n$ is unbounded and $|A^n| \leq n^d |A|$.  Thus by Theorem \ref{gpg-nonst}, there exists an ultra coset progression $HP$ in normal form and a finite subset $X$ of $G$ of bounded cardinality containing the identity, such that
$$
 HP \subset (A \cup \{1\} \cup A^{-1})^{C n}
$$
and such that $\|g\|_{HP,X} =\leq C/n$ for all $g \in A$, and some standard $C$.  Writing $HP = \prod_{{\mathfrak n} \to \alpha} H_{\mathfrak n} P_{\mathfrak n}$ and using {\L}os's theorem (see e.g.  \cite[Appendix A]{bgt}), we see that for ${\mathfrak n}$ sufficiently close to $\alpha$ (and enlarging $C$ if necessary), $H_{\mathfrak n} P_{\mathfrak n}$ is a coset nilprogression of rank and nilpotency class at most $C$, in $C$-normal form, with
$$
 H_{\mathfrak n} P_{\mathfrak n} \subset (A_{\mathfrak n} \cup \{1\} \cup A_{\mathfrak n}^{-1})^{C n_{\mathfrak n}}
$$
and such that $\|g_{\mathfrak n}\|_{H_{\mathfrak n}P_{\mathfrak n},X_{\mathfrak n}} =\leq C/n_{\mathfrak n}$ for all $g_{\mathfrak n} \in A_{\mathfrak n}$.  But this contradicts the construction of the $A_{\mathfrak n}$ for ${\mathfrak n}$ large enough.

\section{Inverse theorem for polynomial growth of sets}\label{inv-grow}

We now prove Theorem \ref{gpg-nonst}.  
Let $A$ and $n$ be as in that theorem.  Since $|A^n| \ll n^{O(1)} |A|$ and $n$ is unbounded, we see from the pigeonhole principle that there exists an unbounded $n_0 \ll n$ such that $|A^{100 n_0}| \ll |A^{n_0}|$.  As we will see at the end of the argument, it would be convenient if we could take $n_0 \asymp n$, and from Theorem \ref{exist}(ii) we see that we may do so when $A$ is symmetric and contains the identity; however, we are not assuming symmetry on $A$, and so we will have to temporarily allow for the possibility that $n_0$ is much less than $n$, and return to address this issue at the end of the argument.

From \cite[Corollary 3.11]{tao-product} we see that $(A^{n_0} \cup \{1\} \cup A^{-n_0})^3$ is an ultra approximate group of cardinality $\asymp |A^{n_0}|$.  In particular we have $|(A^{n_0} \cup \{1\} \cup A^{-n_0})^m| \asymp |A^{n_0}|$ for any standard $m$.

If we now applied \cite[Theorem 4.2]{bgt}, we could conclude that the ultra approximate group $(A^{n_0} \cup \{1\} \cup A^{-n_0})^{12}$ contained an ultra coset progression $HP$ in normal form with $|HP| \asymp |A^{n_0}|$.  However it will be convenient to impose an 	 additional ``$\N$-properness'' hypothesis on $P$ that strengthens the local properness property of normal form; this strengthening
is not explicitly provided in \cite[Theorem 4.2]{bgt}, and so we will repeat some of the arguments in \cite{bgt} to obtain this refinement:

\begin{proposition}\label{npr}  There exists an ultra coset nilprogression $HP \subset (A^{n_0} \cup \{1\} \cup A^{-n_0})^{O(1)}$ with $|HP| \asymp |A^{n_0}|$ such that the associated ultra nilprogression $P(\overline{u}_1,\dots,\overline{u}_r; N_1,\dots,N_r) = HP/P$ in normal form obeys the additional property:
\begin{itemize}
\item[(ii')]  ($\N$-properness)  The group elements $u_1^{n_1} \dots u_r^{n_r}$ with $n_1=O(N_1),\dots,n_r = O(N_r)$ are all distinct.
\end{itemize}
\end{proposition}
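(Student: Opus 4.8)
The plan is to re-run the argument behind \cite[Theorem 4.2]{bgt}, applied to the ultra approximate group $(A^{n_0}\cup\{1\}\cup A^{-n_0})^{12}$, and to observe that one additional reduction upgrades its conclusion to the stronger property (ii'). Recall (roughly) that that argument produces the controlling coset nilprogression by passing to a good Lie model $\pi$ of a bounded power of $A^{n_0}\cup\{1\}\cup A^{-n_0}$, reducing to the case where the model group $L$ is connected and nilpotent, fixing a basis $X_1,\dots,X_r$ of the Lie algebra of $L$ adapted to its lower central series, and taking the generators $u_1,\dots,u_r$ to be preimages (in a bounded power of $A^{n_0}\cup\{1\}\cup A^{-n_0}$) of $\exp(t_iX_i)$ for suitable nonstandard scalars $t_i>0$, with scales $N_i\asymp 1/t_i$; the combinatorial cleanup of \cite{bgt} then puts this into upper-triangular form and establishes the volume bound, with $HP\subset (A^{n_0}\cup\{1\}\cup A^{-n_0})^{O(1)}$ and $|HP|\asymp |A^{n_0}|$. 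What \cite[Theorem 4.2]{bgt} does not provide is the absence of relations among the $\overline{u}_i$ at scale $O(N_i)$: local properness only forbids relations at scale $o(N_i)$, and the construction may well leave behind a generator of finite order $\asymp N_i$.

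The additional reduction is to absorb all such short-scale torsion into the finite group, which is legitimate since the nonstandard finite group in an ultra coset nilprogression may have unbounded internal cardinality. Concretely, after re-running the construction we enlarge $H$ to contain the (internally finite) torsion subgroup of $\langle u_1,\dots,u_r\rangle$; on the Lie-model side this amounts to quotienting out the maximal torus of $L$, which is central since $L$ is connected nilpotent, so the directions $X_i$ lying in its Lie algebra (reordered so as to be $X_{k+1},\dots,X_r$) yield central generators $\overline{u}_{k+1},\dots,\overline{u}_r$ of finite orders $q_i\asymp N_i$, and these are precisely what is moved into the enlarged finite group $H':=\langle H,u_{k+1},\dots,u_r\rangle$. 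After this step the surviving generators $\overline{u}_1,\dots,\overline{u}_k$ generate a torsion-free nilpotent group and, arising from a Lie-algebra basis adapted to the lower central series, form a Mal'cev basis of it; hence every element has a \emph{unique} representation $\overline{u}_1^{n_1}\cdots\overline{u}_k^{n_k}$ with $n_i\in\Z$. In particular the group elements in (ii') -- indeed, with the $n_i$ ranging over all nonstandard integers -- are distinct, which is the required refinement.

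The remaining work is bookkeeping: one must check that the resulting coset nilprogression (rank $k$, finite group $H'$, nilprogression $P':=P(\overline{u}_1,\dots,\overline{u}_k;N_1,\dots,N_k)$ in $N(H')/H'$) still obeys the three normal-form axioms, is contained in $(A^{n_0}\cup\{1\}\cup A^{-n_0})^{O(1)}$, and has $|H'P'|\asymp |A^{n_0}|$. Upper-triangular form and local properness are inherited from $HP$; the volume and size bounds persist because each $q_i$ is comparable to $N_i$ \emph{with standard implied constants} -- the lower bound $q_i\gg N_i$ being exactly local properness of the original $P$, the upper bound coming from $q_it_i$ being a lattice vector of bounded norm -- so that $|H'|\asymp |H|\prod_{i>k}N_i$ exactly compensates the loss of the box lengths $\prod_{i>k}(2\lfloor N_i\rfloor+1)$; and the absorbed elements $u_i^m$ with $|m|\le q_i\asymp N_i$ lie in $(HP)^{O(1)}\subset (A^{n_0}\cup\{1\}\cup A^{-n_0})^{O(1)}$ precisely because $q_i$ is a \emph{standard} multiple of $N_i$. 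I expect this bookkeeping -- together with the subsidiary point of arranging \cite{bgt}'s construction so that $\pi$ restricted to $\langle u_1,\dots,u_r\rangle$ has kernel contained in $H'$, which is what makes the Mal'cev coordinates on the model descend to genuine uniqueness of representations in $N(H')/H'$ -- to be the main obstacle, rather than any conceptual difficulty.
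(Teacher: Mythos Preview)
Your approach is in the right spirit and would likely succeed, but it differs from the paper's route in a way worth noting. Both arguments ultimately deal with the compact part of the Lie model, but at different stages. The paper does not post-process the nilprogression; instead it invokes \cite[Theorem 10.10]{bgt} to obtain, from the outset, a global model $\phi:\langle\tilde A\rangle\to L$ into a \emph{simply connected} nilpotent Lie group (this theorem has already quotiented out the maximal compact subgroup). With $L$ simply connected, the paper then applies \cite[Theorem 4.2]{bgt} using this particular model and proves (ii') by a short contradiction: a collision $\overline{u}_1^{n_1}\cdots\overline{u}_r^{n_r}=\overline{u}_1^{n'_1}\cdots\overline{u}_r^{n'_r}$ is reduced via the upper-triangular form to a relation $\overline{u}_i^{m_i}\cdots\overline{u}_r^{m_r}=1$ with $m_i>0$, which constrains the image $\phi(\langle HP\rangle)$ to elements $\exp(t_1X_1)\cdots\exp(t_rX_r)$ with $t_i$ lying in a bounded interval; since $\phi(\langle HP\rangle)=L$ and $r\le\dim L$, this forces $r=\dim L$ and the $X_i$ independent, at which point the restricted range on $t_i$ contradicts surjectivity. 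No torsion subgroup is ever identified or absorbed.

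Your route---apply \cite[Theorem 4.2]{bgt} first with a possibly non-simply-connected model, then enlarge $H$ to swallow the torsion of $\langle\overline{u}_1,\dots,\overline{u}_r\rangle$---is morally the same operation performed one level down, and the bookkeeping you flag is indeed the substance. A couple of points you should not underestimate: the assertion that the torus directions may be taken to be exactly $X_{k+1},\dots,X_r$ requires choosing the basis compatibly with \emph{both} the lower central series and the Lie algebra of the maximal torus (possible since the latter is central, but it is a choice you must make inside the \cite{bgt} construction, not after); and the claim that the surviving $\overline{u}_1,\dots,\overline{u}_k$ form a Mal'cev basis of the torsion-free quotient rests precisely on the kernel-containment issue you defer to the end. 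The paper's approach buys a cleaner argument by front-loading the simply-connected reduction into a single citation; your approach has the advantage of being more hands-on about what exactly is being discarded, at the cost of having to verify the normal-form and containment axioms by hand.
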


\begin{proof}  We first apply \cite[Theorem 10.10]{bgt} to conclude the existence of an ultra approximate group $\tilde A \subset(A^{n_0} \cup \{1\} \cup A^{-n_0})^{O(1)}$ with $|\tilde A| \asymp |A^{n_0}|$ which enjoys a \emph{global model} $\phi: \langle \tilde A \rangle \to L$ (as defined before \cite[Proposition 6.10]{bgt}) into a connected, \emph{simply connected} nilpotent Lie group $L$.  Here and in the sequel $\langle \tilde A \rangle$ denotes the \emph{external} group generated by $\tilde A$, that is to say the set of all words in $\tilde A$ of bounded length.  The crucial property here is that $L$ is simply connected; in \cite[Theorem 10.10]{bgt}, this is basically accomplished by quotienting out a maximal compact subgroup from a preliminary Lie model for $\langle A \rangle$.

If one now applies \cite[Theorem 4.2]{bgt} to $\tilde A$, then we see that $\tilde A^4$ contains an ultra coset nilprogression $HP$ in normal form commensurable\footnote{We say that two symmetric sets of a group are commensurable if each set can be covered by a bounded number of left-translates of the other.} with $\tilde A^4$ and hence of cardinality $\asymp |A^{n_0}|$.  Furthermore, an inspection of the proof (using $\phi$ as the Lie model) reveals that $H$ lies in the kernel of $\phi$ (as $L$, being simply connected nilpotent, has no non-trivial compact subgroups), and so by abuse of notation we may also define $\phi$ on the quotient space $\langle HP\rangle/H$; also, the image of $HP$ will be an open neighbourhood of $L$.  Finally, the rank $r$ of $HP$ does not exceed the dimension of the Lie model $L$.  It remains to establish the $\N$-properness, which will ultimately be a consequence of the simply connected nature of $L$.

Suppose for contradiction that we have a collision
$$ \overline{u}_1^{n_1} \dots \overline{u}_r^{n_r} = \overline{u}_1^{n'_1} \dots \overline{u}_r^{n'_r}$$
for some $(n_1,\dots,n_r) \neq (n'_1,\dots,n'_r)$ with $n_i,n'_i = O( N_i)$ for $i=1,\dots,r$.  Cancelling off the $\overline{u}_1$ factors if $n_1=n'_1$, then the $\overline{u}_2$ factors if $(n_1,n_2)=(n'_1,n'_2)$, and so forth, and then moving all copies of the remaining $\overline{u}_i$ to one side using the identity
\begin{equation}\label{gh}
gh = hg [g,h]
\end{equation}
and simplifying using repeated application of the upper triangular form condition (i), we eventually conclude that
$$ \overline{u}_i^{m_i} \dots \overline{u}_r^{m_r} = 1$$
for some $1 \leq i \leq r$ and $m_i,\dots,m_r$ with $m_j = O(N_j)$ for $i \leq j \leq r$ and $m_i$ positive.  Because of this and further repeated application of the upper triangular form condition, we see that for any other $a_1,\dots,a_r$ with $a_j = O(N_j)$ for $1 \leq j \leq r$, we can write
\begin{equation}\label{cough}
 \overline{u}_1^{a_1} \dots \overline{u}_r^{a_r} = \overline{u}_1^{a_1} \dots \overline{u}_{i-1}^{a_{i-1}} \overline{u}_i^{b_i} \dots \overline{u}_r^{b_r}
\end{equation}
where $b_j = O(N_j)$ for $i \leq j \leq r$ and $0 \leq b_i < m_i$.  Meanwhile, by the arguments in \cite[\S 9]{bgt}, we see that for each $1 \leq j \leq r$ there is a one-parameter subgroup $t \mapsto \exp(tX_j)$ in $L$ such that
$$ \phi( \overline{u}_j^{n_j} ) = \exp\left( (\operatorname{st} \frac{n_j}{N_j}) X_j \right)$$
for all $n_j = O(N_j)$.  From this, the normal form property (i) composed with $\phi$, and the simply connected nilpotent nature of $L$ (which makes the exponential map a diffeomorphism, and the Baker-Campbell-Hausdorff formula globally valid) we conclude in particular that $[X_j,X_k]$ lies in the linear span of $X_{k+1},\dots,X_r$ in the Lie algebra of $L$ for all $1 \leq j < k \leq r$.  From this and \eqref{cough} we conclude that every element in $\phi(\langle HP \rangle)$ takes the form
\begin{equation}\label{expt}
 \exp( t_1 X_1 ) \dots \exp( t_r X_r )
\end{equation}
with $t_j \in \R$ for all $1 \leq j \leq r$, with $0 \leq t_i \leq \operatorname{st} \frac{m_i}{N_i}$.  
As $\phi(HP)$ is an open neighbourhood of $L$ and $L$ is connected, $\phi$ must be surjective on $\langle HP \rangle$.
Since $r$ cannot exceed the dimension of $L$, this forces $r$ to in fact be equal to the dimension of $L$, and the $X_1,\dots,X_r$ to be linearly independent.  However, due to the limitation on $t_i$ and the upper triangular nature of the Lie brackets $[X_j,X_k]$ mentioned above, we see that not every element of $L$ is of the form \eqref{expt} (for instance, $\exp( t_i X_i)$ is not of this form if $t_i$ is negative or larger than $\operatorname{st} \frac{m_j}{N_j}$), giving the required contradiction.
\end{proof}

Let $HP$ be as above.  As $(A^{n_0} \cup \{1\} \cup A^{-n_0})^3$ is commensurable with $HP$, it can be covered by a bounded number of left-cosets of the group $\langle HP \rangle$; if we let $\pi: \langle A^{n_0} \rangle \to \langle A^{n_0} \rangle / \langle HP \rangle$ be the quotient map, we thus see that $\pi( A^{n_0} )$ has bounded cardinality.  On the other hand $\pi(A^m)$ is non-decreasing in $m$.  By the pigeonhole principle, we may thus find $1 \leq n_1 < n_0$ such that $\pi(A^{n_1}) = \pi(A^{n_1+1})$.  There thus there exists a finite set $X \in \langle A^{n_0} \rangle$ (of bounded cardinality) such that $A^{n_1+1} \subset X \langle HP \rangle$, and such that the cosets $x \langle HP \rangle$ for $x \in X$ are disjoint and all have non-empty intersection with $A^{n_1}$.  Since
$$ A^{n_1} \langle HP \rangle = A^{n_1+1} \langle HP \rangle = X \langle HP \rangle$$
we see that left multiplication by any element $a$ of $A$ preserves $X \langle HP \rangle$, and so for each $a \in A$ there is a unique permutation $\sigma_a: X \to X$ with the property that
\begin{equation}\label{ashp}
 a x \langle HP \rangle = \sigma_a(x) \langle HP \rangle
\end{equation}
for all $x \in X$.

We now apply an argument of Sanders \cite{sanders}.
Since $|(A^{n_0} \cup \{1\} \cup A^{-n_0})^m| \asymp |A^{n_0}|$ for any standard $m$, we have
$$ |A^{10n_0}| \ll |A^{n_0}|.$$
Thus, by the pigeonhole principle, we may find $n_0  < n_2 < 2n_0$ such that
$$ |A^{n_2+1}| \leq \left(1 + O(\frac{1}{n_0})\right) |A^{n_2}|,$$
thus if we write $B := A^{n_2}$ then
\begin{equation}\label{bab}
 |BA| \leq \left(1 + O(\frac{1}{n_0})\right) |B|.
\end{equation}
We now introduce a nonstandard semi-metric $d: G \times G \to {}^* [0,2]$ on $G$ by the formula
\begin{equation}\label{dsy}
 d( x, y ) := \frac{| Bx \Delta By |}{|B|}
\end{equation}
where $\Delta$ is set-theoretic difference, then $d$ is non-negative, right-invariant (thus $d(xg,yg)=d(x,y)$ for all $g \in G$), symmetric, and obeys the triangle inequality.  From \eqref{bab} (and the fact that $A$ contains the identity) we see that
\begin{equation}\label{dan}
d( a, 1 ) \ll \frac{1}{n_0}
\end{equation}
for all $a \in A$.

To use this, we study the geometry of the semi-metric $d$ on the coset space $\overline{X} := \{ x \langle HP \rangle: x \in X\}$.  We would like to define the nonstandard distances
$$ d_{\overline{X}}( x \langle HP \rangle, x' \langle HP \rangle ) := \inf \{ d( y, y' ): y \in x \langle HP \rangle; y' \in x' \langle HP \rangle \}$$
for any $x,x' \in X$.  Here we run into a technical problem in that the infimum is not automatically defined since the set on the right-hand side is external.  However, we can fix this as follows.  First, using right-invariance, we may (formally) write
$$ d_{\overline{X}}( x \langle HP \rangle, x' \langle HP \rangle) = \inf \{ d( x, y' ): y' \in x' \langle HP \rangle \}.$$
Next, observe that if $d(x,y') < 2$, then from \eqref{dsy} $Bx$ and $By'$ intersect, so $y'$ lies in $B^{-1} Bx$; since
\begin{equation}\label{dor}
\begin{split}
|B^{-1} B x HP| &= |A^{-n_2} A^{n_2} x HP| \\
&\leq |A^{-2n_0} A^{2n_0} X HP| \\
&\leq |(A^{-n_0} \cup \{1\} \cup A^{n_0})^{O(1)}| \\
&\ll |HP|
\end{split}
\end{equation}
we see that $B^{-1} B x$ is covered by boundedly  many left translates of $HP$.  In particular, $B^{-1} Bx \cap x' \langle HP \rangle$ is contained in $x' HP^{m'}$ for some standard $m'$ (which can be made uniform in $x,x'$, since there are only boundedly many choices for these parameters), thus we may (formally) write
$$ d_{\overline{X}}( x \langle HP \rangle, x' \langle HP \rangle) = \inf \{ d( x, y' ): y' \in x' HP^{m'} \} \cup \{2\}.$$
We take this as the \emph{definition} of $d_{\overline{X}}$, and then one easily verifies that the previous two formulae for $d_{\overline{X}}$ are also valid (interpreting the infimum as the greatest lower bound). This makes $d_{\overline{X}}$ a well-defined nonstandard semi-metric on $\overline{X}$.

By construction, we have $d_{\overline{X}}(x \langle HP \rangle,x' \langle HP \rangle) \leq d(x,x')$ for all $x,x' \in X$.  Unfortunately, it is possible for $d_{\overline{X}}(x \langle HP \rangle,x' \langle HP \rangle)$ to be significantly smaller than $d(x,x')$, which will be undesirable for our purposes.  Fortunately, we can fix this by exploiting the ``gauge freedom'' to multiply each $x \in X$ (other than the identity, which we wish to keep in $X$) on the right by an arbitrary element of $\langle HP \rangle$, which does not affect the cosets in $\overline{X}$ or the metric $d_{\overline{X}}$:

\begin{lemma}\label{loma}  After right-multiplying each $x \in X$ (other than the identity) by an element of $\langle HP \rangle$, we can ensure that
$$ d(x,x') \asymp d_{\overline{X}}(x \langle HP \rangle,x' \langle HP \rangle) $$
for all $x,x' \in X$.
\end{lemma}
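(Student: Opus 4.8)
The plan is to reduce the statement to a purely combinatorial fact about the finite pseudometric space $(\overline{X}, d_{\overline{X}})$ and then construct the representatives by induction on $|X|$.

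The reduction rests on two structural observations about $d_{\overline{X}}$. First, since $d$ is right-invariant under $\langle HP\rangle$ and each coset $x\langle HP\rangle$ is a single right $\langle HP\rangle$-coset, for any two cosets $V,W$ occurring in $\overline{X}$ and any $w\in W$ the quantity $\inf_{u\in V}d(u,w)$ is independent of the choice of $w\in W$ and equals $d_{\overline{X}}(V,W)$: replacing $w$ by $wk$ with $k\in\langle HP\rangle$ and right-translating by $k^{-1}$ merely permutes $V$. A corollary (exploiting this homogeneity on both factors, so the cap at $2$ causes no trouble) is that $d_{\overline{X}}$ satisfies the triangle inequality and is a genuine pseudometric on the bounded set $\overline{X}$. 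Second, this infimum is attained: $d(u,w)<2$ forces $Bu\cap Bw\neq\emptyset$, hence $u\in B^{-1}Bw$, and the computation used in \eqref{dor} shows that $B^{-1}Bw\cap V$ is contained in a translate of $HP^{m'}$ for some standard $m'$ and so is internally finite, whence $d_{\overline{X}}(V,W)=\min(2,\min_{u\in B^{-1}Bw\cap V}d(u,w))$ is a nonstandard minimum. Finally, note that for any choice of representatives $\tilde x\in x\langle HP\rangle$ one has $d(\tilde x,\tilde x')\geq d_{\overline{X}}(x\langle HP\rangle,x'\langle HP\rangle)$ automatically; so it suffices to produce representatives with $\tilde 1=1$ satisfying the reverse inequality up to a standard multiplicative constant.

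The construction goes by induction on the number $k$ of distinct cosets, the case $k=1$ being vacuous. For the inductive step, choose a pair of distinct cosets $C_a,C_b$ realising the minimum $\delta_{\min}$ of $d_{\overline{X}}$ over all pairs of cosets, arranging after relabelling that $C_b$ is not the identity coset $C_0:=\langle HP\rangle$. Apply the inductive hypothesis to the $k-1$ cosets other than $C_b$ (still basing at $1\in C_0$) to obtain representatives $\tilde c$ for those cosets, then let $\tilde c_b$ be a point of $C_b$ minimising $d(\tilde c_b,\tilde c_a)$; this exists by the second observation and achieves $d(\tilde c_a,\tilde c_b)=d_{\overline{X}}(C_a,C_b)=\delta_{\min}$, and $\tilde c_b$ is a right $\langle HP\rangle$-translate of the original representative, as the lemma requires. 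For any other coset $C_j$, the triangle inequality together with the global minimality $\delta_{\min}\leq d_{\overline{X}}(C_b,C_j)$ gives $d_{\overline{X}}(C_a,C_j)\leq\delta_{\min}+d_{\overline{X}}(C_b,C_j)\leq 2\,d_{\overline{X}}(C_b,C_j)$, so $d(\tilde c_b,\tilde c_j)\leq d(\tilde c_b,\tilde c_a)+d(\tilde c_a,\tilde c_j)\leq\delta_{\min}+2C_{k-1}\,d_{\overline{X}}(C_b,C_j)\leq(1+2C_{k-1})\,d_{\overline{X}}(C_b,C_j)$, where $C_{k-1}$ is the constant supplied by the inductive hypothesis. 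Thus the claim propagates with $C_k\leq 1+2C_{k-1}$, and since $k=|X|=O(1)$ the final comparability constant is bounded.

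The step I expect to be the real obstacle is precisely the choice of which coset to strip off at each stage. The obvious choice — the coset nearest to the fixed base point $1$ — fails, because two cosets can both be far from $C_0$ yet very close to each other, and representatives chosen relative to $C_0$ would then be forced much too far apart; stripping off instead a member of a \emph{globally} minimal pair is what makes the key estimate $\delta_{\min}\leq d_{\overline{X}}(C_b,C_j)$ available and keeps the constant from degenerating. The remaining work is routine: checking that the right-invariance and attainment statements pass through the ultraproduct, and that the homogeneity of $d$ on cosets is genuinely used in deriving the triangle inequality for $d_{\overline{X}}$.
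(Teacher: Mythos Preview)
Your proof is correct and rests on the same core idea as the paper's: attach each coset to a nearest already-placed neighbour and control the error via the triangle inequality together with the minimality of the attaching edge. The organisation differs slightly. The paper fixes an arbitrary enumeration $C_1=\langle HP\rangle, C_2,\dots,C_m$, builds an explicit spanning tree by connecting each $C_i$ to the nearest $C_j$ with $j<i$, proves a separate lemma that path length in this tree is $\asymp d_{\overline{X}}$, and then chooses representatives along tree edges in one pass. You instead induct directly on $|X|$, peeling off a vertex in a \emph{globally} minimal pair at each step; the implicit tree you build is thus a specific one (closer to a minimum spanning tree), and your key inequality $\delta_{\min}\le d_{\overline{X}}(C_b,C_j)$ plays exactly the role of the paper's $d_{\overline{X}}(C_{k_{l-1}},C_j)\le d_{\overline{X}}(C_i,C_j)$. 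Both arguments incur the same exponential blowup $C_k\le 1+2C_{k-1}$ in the comparability constant, which is harmless since $|X|=O(1)$. Your version is a bit more streamlined in that it avoids the intermediate path-length lemma; the paper's version makes the tree structure more visible.
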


\begin{proof}  We enumerate $X$ as $x_1,\dots,x_m$ with $x_1=1$, so that cosets in $\overline{X}$ are enumerated as $C_1,\dots,C_m$ with $C_i := x_i \langle HP \rangle$.  We form a spanning tree on these cosets by connecting each $C_i$, $1 < i \leq C_m$, to the $C_j$, $1 \leq j < i$ that minimises the distance $d_{\overline{X}}(C_i,C_j)$; we refer to this $j$ as the ``parent'' of $i$.  (If there is more than one $j$ that minimises the distance, break the tie arbitrarily.)  This is clearly a spanning tree.  We claim the following property: if $1 \leq i < j \leq m$, then $d_{\overline{X}}(C_i,C_j)$ is comparable to the length of the path $C_i = C_{k_1}, C_{k_2}, \dots, C_{k_l} = C_j$ connecting $C_i$ to $C_j$ in the spanning tree, thus
\begin{equation}\label{doe}
 d_{\overline{X}}(C_{k_1},C_{k_2}) + \dots + d_{\overline{X}}(C_{k_{l-1}},C_{k_l}) \asymp d_{\overline{X}}(C_i,C_j).
\end{equation}
The lower bound follows from the triangle inequality.  To prove the upper bound, we assume inductively that the claim has already been established for smaller values of $j$.  If $C_{k_{l-1}} = C_i$ then the claim is trivial, so suppose that $C_{k_{l-1}} \neq C_i$.  As $k_{l-1}$ is the parent of $k_l = j$, we see from construction of the spanning tree we have $d_{\overline{X}}(C_{k_{l-1}},C_{k_l}) \leq d_{\overline{X}}(C_i,C_j)$, so by the triangle inequality $d_{\overline{X}}(C_i, C_{k_{l-1}}) \ll d_{\overline{X}}(C_i,C_j)$.  From the induction hypothesis we have
$$ d_{\overline{X}}(C_{k_1},C_{k_2}) + \dots + d_{\overline{X}}(C_{k_{l-2}},C_{k_{l-1}})  \ll d_{\overline{X}}(C_i, C_{k_{l-1}}) \ll d_{\overline{X}}(C_i,C_j) $$
and the claim follows.

Proceeding recursively from $C_2$ to $C_l$, we may now right-multiply each $x_i$, $2 \leq i \leq l$ by an element of $\langle HP \rangle$ such that
$$ d_{\overline{X}}( C_i, C_{j_i} ) = d( x_i, x_{j_i} )$$
for all $2 \leq i \leq l$, where $j_i$ denotes the parent of $i$. The claim now follows from \eqref{doe} and the triangle inequality.
\end{proof}

Henceforth we let $X$ be chosen to obey the conclusion of the above lemma.  Let $a \in A$ and $x \in X$.  By \eqref{ashp} we have
$$ ax = x' g$$
for some $g \in \langle HP \rangle$ and $x' := \sigma_a(x)$.  From the above lemma, right-invariance, and \eqref{dan}, we have
\begin{align*}
 d(x,x') &\ll d_{\overline{X}}( x \langle HP \rangle, x' \langle HP \rangle ) \\
&\leq d( x, x' g) \\
& = d( x, ax ) \\
&= d( 1, a ) \\
&\ll \frac{1}{n_0}.
\end{align*}
By the triangle inequality and right invariance, we then have
$$ d( x', x' g ) \leq d( x', x ) + d( x, ax ) \ll \frac{1}{n_0} + d(1,a) \ll \frac{1}{n_0}.$$
By repeated application of right-invariance and the triangle inequality we have
$$ d(x', x' g^m) \leq m d(x',x'g)$$
for any nonstandard natural number $m$, and in particular there exists $m_0 \gg n_0$ such that
$$ d(x', x' g^m) < 1 $$
for all $1 \leq m \leq m_0$.  By \eqref{dsy}, this means that $Bx'$ and $Bx' g^m$ intersect for all such $m$, thus
$$ g^m \in (x')^{-1} B^{-1} B x' $$
for all $1 \leq m \leq m_0$.  By \eqref{dor}, the set $(x')^{-1} B^{-1} B x'$ can be covered by boundedly many left-translates of $HP$, and so the orbit $\{ g^m: 1 \leq m \leq m_0\}$ is covered by a bounded number of left translates of $HP$.  Each of these translates either lies in $\langle HP \rangle$ or is disjoint from that group.  On the other hand, $g$ lies in $\langle HP \rangle$.  Thus (by induction on $m$) the only left translates of $HP$ that the orbit $\{ g^m: 1 \leq m \leq m_0\}$ can actually reach are also contained in $\langle HP \rangle$, so that
$$ g^m \in HP^{O(1)}$$
for all $1 \leq m \leq m_0$.  We can then write
$$ g^m \in \overline{u_1}^{n_{1,m}} \dots \overline{u_r}^{n_{r,m}} H$$
for some $n_{i,m}= O(N_i)$ for $i=1,\dots,r$ and $1 \leq m \leq m_0$.  From Proposition \ref{npr} (and the upper triangular form (i)) we conclude that $n_{1,m}$ is linear in $m$, $n_{1,m} = m n_{1,1}$, which implies that 
$$n_{1,1} \ll \frac{N_1}{m} \ll \frac{N_1}{n_0}.$$
A second application of Proposition \ref{npr} (and the upper triangular form) then shows that $n_{2,m}$ is linear in $m$ up to additive errors of $O( m N_2 / n_0 )$, which implies that $n_{2,1} = O( N_2 / n_0 )$.  Continuing in this fashion we see that $n_{i,1} = O(N_i/n_0)$ for all $i$, which implies that $\|g\|_{HP} \ll 1/n_0$, thus
$$ \| \sigma_a(x)^{-1} a x \|_{HP} \ll \frac{1}{n_0}$$
for all $x \in X$, and thus by \eqref{hps-def} we have
\begin{equation}\label{hp}
 \| a\|_{HP,X} \ll \frac{1}{n_0}
\end{equation}
for all $a \in A$.  If we had $n_0 \asymp n$ then we would now have Theorem \ref{gpg-nonst}; however we currently only have $n_0 \ll n$.  We can address this issue as follows.  Firstly, if we set $A' := A \cup \{1\} \cup A^{-1}$, then from \eqref{hp} we have
$$ \| a\|_{HP,X} \ll \frac{1}{n_0}
$$
for all $a \in A'$; also
$$ HP \subset (A^{n_0} \cup \{1\} \cup A^{-n_0})^3 \subset (A')^{3n_0}.$$
By repeating the arguments used to establish \eqref{hom} we see that
\begin{equation}\label{hom-2}
HP^m \subset (A')^{C m n_0} \subset XHP^{C^2 m}
\end{equation} 
for some standard $C$ and all nonstandard natural numbers $m$.  By \cite[(3.2), Proposition 3.10]{bt} we have $|HP^m| \asymp |HP^{m'}|$ whenever $m \asymp m'$, and thus $|(A')^{C m n_0}| \asymp |(A')^{Cm' n_0}|$ whenever $m \asymp m'$.  In particular, we have $|(A')^{100n}| \asymp |(A')^n|$ for any $n \geq n_0$.  If we now repeat the entire argument in this section, and $n_0$ replaced by $n$, until we reach \eqref{hp} again (and with $HP$ replaced by some other ultra coset nilprogression), we obtain Theorem \ref{gpg-nonst} as required.

\section{Further growth of locally polynomial functions}\label{furt-sec}

We now establish Theorem \ref{furt}.  Arguing as in Section \ref{nonst-sec}, we can derive Theorem \ref{furt} from the following nonstandard version:

\begin{theorem}[Further growth of a locally polynomially growing set, nonstandard version]\label{furt-nonst}  
Let $A$ be a symmetric internal finite subset of a nonstandard group $G$ containing the identity, let $n$ be an unbounded natural number, and suppose that $|A^n| \leq n^{O(1)} |A|$.  Then there exists a continuous piecewise linear non-decreasing internal function $f: {}^* [0,+\infty) \to {}^* [0,+\infty)$, with $f(0)=0$ and $f$ having a bounded number of  distinct linear pieces, each of which has a slope that is a standard natural number, such that
$$ \log |A^{mn}| = \log |A^n| + f(\log m) + O(1)$$
for all nonstandard natural numbers $m \geq 1$.
\end{theorem}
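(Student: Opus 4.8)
The plan is to reduce the statement to an analysis of the growth of a single ultra coset nilprogression, and then compute that growth explicitly. First I would invoke Theorem~\ref{gpg-nonst} (in the symmetric case), together with the inclusions \eqref{hom-2}, namely that there is an ultra coset nilprogression $HP$ in normal form, a bounded finite set $X$ containing the identity, and a standard constant $C$, such that
$$ HP^m \subset A^{Cmn} \subset XHP^{C^2 m}$$
for all nonstandard natural numbers $m \geq 1$ (using $A = A \cup \{1\} \cup A^{-1}$ by symmetry). Since $|X|$ is bounded, this gives $|HP^m| \asymp |A^{Cmn}|$ up to a bounded multiplicative factor, and hence $\log |A^{mn}| = \log |HP^{m/C}| + O(1)$ after reindexing; by \cite[(3.2), Proposition~3.10]{bt} the quantity $|HP^m|$ only changes by a bounded factor when $m$ is replaced by a commensurate value, so it suffices to understand $\log |HP^m|$ as a function of $\log m$.

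The core computation is the following: if $P = P(u_1,\dots,u_r;N_1,\dots,N_r)$ is an ultra nilprogression in normal form, then by the volume bound (iii) we have $|HP^m| = |H| \cdot |P^m| \asymp |H| \prod_{i=1}^r (2\lfloor (tN_i)\rfloor + 1)$ where $P^m = P(u_1,\dots,u_r; mN_1,\dots,mN_r)$ --- but one must be careful, because dilating a nilprogression in normal form does not automatically keep it in normal form: the commutator relations in (i) interact with the dilation, so $HP^m$ need not be commensurate with $P(u_1,\dots,u_r;mN_1,\dots,mN_r)$. This is exactly the phenomenon illustrated by the Heisenberg example in the introduction, where the $b$-coordinate grows like $mN^3 + m^2 N^2$. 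The right framework here is the theory of \emph{nilpotent progressions} / growth of dilates developed in \cite{bt} and \cite{tointon}: by \cite[Proposition~3.10]{bt} (and the surrounding material, which I would cite precisely), $|HP^m|$ is comparable (up to bounded multiplicative constants, uniform in $m$) to $\prod_{i=1}^r \max(1, M_i(m))$ where each $M_i(m)$ is, up to constants, a monomial $m^{e_i} N_i$ with $e_i$ a positive integer bounded by the nilpotency class (hence bounded); the exponents $e_i$ arise because a commutator of weight $w$ among generators at "dilation level" $m$ contributes a factor growing like $m^w$. Thus $\log |HP^m| = \sum_{i=1}^r \log \max(1, m^{e_i} N_i) + O(1) = \sum_{i=1}^r \max(0, e_i \log m + \log N_i) + O(1)$.

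Now I would set $g(x) := \sum_{i=1}^r \max(0, e_i x + \log N_i)$, defined for $x \in {}^*[0,\infty)$, so that $\log|HP^m| = g(\log m) + O(1)$. Each summand $\max(0, e_i x + c_i)$ is continuous, piecewise linear with slopes in $\{0, e_i\}$, non-decreasing (since $e_i > 0$), and has a single breakpoint; a sum of $r$ such functions (with $r$ bounded) is continuous, piecewise linear, non-decreasing, with at most $r+1$ pieces, each slope being a sum of a bounded subcollection of the $e_i$, hence a standard natural number bounded by $\sum e_i = O(1)$. Finally, to get $f$ with $f(0) = 0$ and the normalization by $\log|A^n|$, I would set $f(x) := g(x + \log C) - g(\log C)$ (absorbing the reindexing shift by the constant $C$ from the inclusions above into a horizontal translation, which preserves the piecewise-linear structure, the slopes, and monotonicity, and changes the number of pieces by at most a bounded amount), and note $\log|A^{mn}| - \log|A^n| = g(\log(Cm)) - g(\log C) + O(1) = f(\log m) + O(1)$, using $\log|A^n| = \log|HP^{1/C}| + O(1) = g(\log C^{-1}) + O(1) = g(-\log C) + O(1)$, after checking the edge cases $m=1$ and small $m$ separately since $g$ need only be defined on $[0,\infty)$ (one extends $g$ to all of $\mathbb{R}$ by the same formula, which is harmless). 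The main obstacle is the second step: making rigorous and correctly citing the claim that $|HP^m|$ is, up to bounded factors, a product of monomials $m^{e_i} N_i$ with integer exponents $e_i$ --- this requires care with the normal-form axioms under dilation and is precisely where the results of \cite{bt} on growth of dilates of coset nilprogressions do the real work; everything after that is elementary manipulation of piecewise-linear functions.
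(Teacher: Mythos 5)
Your reduction of the problem to understanding $\log |HP^m|$ as a function of $\log m$ matches the paper's opening move, but the core of your argument has two genuine gaps.

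First, the claimed formula $|HP^m| \asymp \prod_{i=1}^r \max(1, m^{e_i} N_i)$ with a single fixed exponent $e_i$ per coordinate is incorrect, and is not what \cite[Proposition 3.10]{bt} says. The Heisenberg example in the introduction already breaks it: the central coordinate of $A^m$ has extent $\asymp mN^3 + m^2N^2$, which is a sum of two monomials, not a single monomial $m^{e}N^3$; the effective exponent jumps from $1$ to $2$ at $m \approx N$. The paper instead lifts $HP$ to a nonstandard simply connected nilpotent Lie group (Proposition \ref{lak}) and proves $|Q^m| \asymp V(m)$ where $V(m)$ is a polynomial of bounded degree with nonnegative coefficients built from wedge products of the commutator directions $(mN)^{w_i}\vec\alpha_i$; the piecewise-linearity of $\log V$ in $\log m$ then comes from $\log(x+y) = \max(\log x, \log y) + O(1)$, giving a \emph{max} of linear functions, not a \emph{sum} of ramps.

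Second, and more fundamentally, your $g(x) = \sum_i \max(0, e_i x + \log N_i)$ is automatically convex — every summand has slope $0$ then $e_i$. But the conclusion must allow concave pieces as well: the abelian example $A = \{-N,\dots,N\}\times\{-N^2,\dots,N^2\}$ in $\Z/N^3\Z \times \Z/N^3\Z$ has $f$ with slopes $2,1,0$, which is concave. Nothing in your proposal produces decreasing slopes. The paper's proof accounts for this via an induction on the rank of $HP$: the convex, polynomial growth regime only persists while the map $\phi$ from the lifted Lie group to $N(H)/H$ is injective on $Q^m$, and when injectivity first fails at some $m_0$ there is a dimension-reduction proposition (showing the global Lie model of $\overline{P}^{m_0}$ has dimension strictly less than $r$) that lets one replace $HP$ by a lower-rank progression and invoke the inductive hypothesis for $m \geq m_0$, concatenating a possibly concave piecewise-linear continuation onto the convex piece. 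Your proposal does not address the non-proper regime at all, and would fail on any example where $\langle HP \rangle$ has strictly smaller growth than the free nilpotent model predicts — in particular on any example where the slope decreases.
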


Let $A$ and $n$ be as in the above theorem.  By the arguments in the previous section, $A^n$ is commensurable with an ultra coset nilprogression $HP$ in normal form, of some standard rank $r$, which obeys the $\N$-properness property in Proposition \ref{npr}.  We will induct on this parameter $r$, assuming that the claim has already been proven for smaller values of $r$.  From \eqref{hom} and Theorem \ref{exist}(ii) we have
$$ |A^{mn}| \asymp |HP^m| \asymp |H| |\overline{P}^m|$$
where $\overline{P} := HP/H$, which lies in the internal group $N(H)/H$, where $N(H)$ is the normaliser of $H$ in $G$.  Thus matters reduce to showing that
\begin{equation}\label{fam}
\log |\overline{P}^m| = \log |\overline{P}| + f(\log m) + O(1)
\end{equation}
for $f$ as above.

By construction, $\overline{P} = P( \overline{u}_1,\dots,\overline{u}_r; N_1,\dots,N_r)$ is an $\N$-proper ultra nilprogression in normal form.  We may assume that $N_i \gg 1$ for each $i=1,\dots,r$, since any $i$ with $N_i=o(1)$ can simply be deleted from the progression, at which point we can use the induction hypothesis. 
It will be convenient to lift this progression up to a nonstandard simply connected $L$ Lie group (cf. \cite[Lemma C.3]{bgt} for a similar lifting from a local group nilprogression to a global group nilprogression):

\begin{proposition}\label{lak}  There exists a nonstandard simply connected nilpotent Lie group $L$ of dimension $r$ (that is to say, the ultraproduct of standard simply connected nilpotent Lie groups of dimension $r$), a linear basis (over ${}^* \R$) $X_1,\dots,X_r$ of the associated Lie algebra $\log L$ (which can be identified with $L$ using the exponential map $\exp: \log L \to L$ and its inverse, the logarithmic map $\log: L \to \log L$), and a homomorphism $\phi: \Gamma \to N(H)/H$ from the nonstandard group $\Gamma \leq L$ generated by $\exp(X_1),\dots,\exp(X_r)$ to $N(H)/H$ such that $\exp(X_i) = \overline{u}_i$ for all $i=1,\dots,r$.  Furthermore, we have
\begin{equation}\label{xijk}
 [X_i,X_j] = \sum_{i,j < k \leq r} c_{ijk} X_k
\end{equation}
for all $1 \leq i,j \leq r$ and some nonstandard rationals $c_{ijk}$, such that $qc_{ijk}$ is a nonstandard integer for some bounded positive integer $q$, and such that $c_{ijk} = O\left( \frac{N_k}{N_i N_j} \right)$, where $[,]$ of course denotes the Lie bracket on ${\mathfrak L}$.  Finally, for any standard $C>0$, $\phi$ is injective on $P( \exp(X_1),\dots,\exp(X_r); CN_1,\dots,CN_r)$.
\end{proposition}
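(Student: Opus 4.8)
The plan is to extract from the upper-triangular form a consistent system of commutator relations among the $\overline u_i$, use it to present a nonstandard torsion-free nilpotent group whose Malcev completion supplies the Lie group $L$, and then track the quantitative bounds through the Baker--Campbell--Hausdorff (BCH) formula. I would first record the commutator data. For $1 \leq i < j \leq r$ and each of the four sign choices, condition (i) of normal form places $[\overline u_i^{\pm 1}, \overline u_j^{\pm 1}]$ inside $P(\overline u_{j+1},\dots,\overline u_r; O(N_{j+1})/(N_iN_j),\dots,O(N_r)/(N_iN_j))$; since $N_k \gg 1$ for every $k$, each listed width is $O(N_k)$, so repeatedly applying $gh = hg[g,h]$ and (i) to bring the defining word into normal order yields $[\overline u_i^{\pm 1},\overline u_j^{\pm 1}] = \overline u_{j+1}^{a_{j+1}}\cdots\overline u_r^{a_r}$ with nonstandard-integer exponents $a_k = O(N_k/(N_iN_j))$, uniquely determined by the $\N$-properness of $\overline P$.

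Next I would form the nonstandard group $\Gamma$ presented by generators $y_1,\dots,y_r$ subject to exactly these relations (with $y_k$ for $\overline u_k$). Since every commutator of generators then lies in the subgroup generated by strictly-higher-indexed generators, $y_1,\dots,y_r$ is a polycyclic generating sequence, $\Gamma$ is nilpotent of class $\leq r$, and $y_k \mapsto \overline u_k$ extends to a homomorphism $\phi \colon \Gamma \to N(H)/H$ with $\phi(y_k) = \overline u_k$ and image $\langle\overline u_1,\dots,\overline u_r\rangle$ (the $\overline u_k$ satisfy the imposed relations by construction). The essential point is that this polycyclic presentation is \emph{consistent}, i.e.\ the normal forms $y_1^{n_1}\cdots y_r^{n_r}$, $n_k\in{}^*\Z$, are pairwise distinct: consistency reduces to a bounded list of identities among bounded-complexity words in the $y_k$, and when each such identity is pushed through $\phi$ and its two sides are normal-ordered, only exponents of size $O(N_k/(N_iN_j)) = O(N_k)$ appear, so the identity can be verified from the $\N$-properness of $\overline P$. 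Consistency forces $\Gamma$ to be torsion-free of Hirsch length exactly $r$; hence $\Gamma$ is the ultraproduct of standard torsion-free nilpotent groups of Hirsch length $r$, and the ultraproduct of their Malcev completions is a nonstandard simply connected nilpotent Lie group $L$ of dimension $r$ containing $\Gamma$ as a lattice, with $y_k = \exp X_k$ for a basis $X_1,\dots,X_r$ of $\log L$ and $\phi(\exp X_k) = \overline u_k$.

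To obtain the structure constants, I would equate the two expansions of $\log[y_i,y_j]$: the BCH expansion $[X_i,X_j]$ plus iterated $X_i,X_j$-brackets, and the expansion $\sum_k a_k X_k$ plus the BCH corrections incurred by re-ordering $\prod_k y_k^{a_k}$. Solving recursively over the filtration gives $[X_i,X_j] = \sum_{i,j<k\leq r} c_{ijk}X_k$ with $c_{ij,j+1} = a_{j+1}\in{}^*\Z$ and the deeper $c_{ijk}$ rational expressions in already-determined, shallower data with BCH coefficients, hence lying in ${}^*\Q$ with a common denominator $q$ dividing a fixed integer depending only on $r$; an induction using $N_iN_j \gg 1$ propagates the bound $c_{ijk} = O(N_k/(N_iN_j))$, while nilpotency, antisymmetry and Jacobi are automatic because $\log L$ is genuinely the Lie algebra of $L$. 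Finally, for injectivity of $\phi$ on $P(\exp X_1,\dots,\exp X_r;CN_1,\dots,CN_r)$ with $C$ standard: by the usual calculus of normal-form nilprogressions (as in \cite{bgt}), any word in the $\exp X_k^{\pm1}$ of total $\exp X_k$-degree at most $CN_k$ can be brought, inside $L$, to normal-ordered form $(\exp X_1)^{t_1}\cdots(\exp X_r)^{t_r}$ with $t_k\in{}^*\Z$ and $|t_k|=O(N_k)$, the reorderings only moving bounded multiples of the $c_{ijk}$ into deeper generators; distinct elements of the dilated progression then have distinct normal forms (faithfulness of the polycyclic normal form in the lattice $\Gamma\leq L$), and $\phi$ carries them to $\overline u_1^{t_1}\cdots\overline u_r^{t_r}$ with $|t_k|=O(N_k)$, which are pairwise distinct by the $\N$-properness of $\overline P$; hence $\phi$ is injective there.

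The main obstacle is the consistency claim in the second step: verifying that the commutator relations of the first step present a genuinely consistent polycyclic group — equivalently, that $\Gamma$ is torsion-free of full Hirsch length $r$ — which is what makes the Malcev completion available and of the right dimension. This is exactly the place where the strengthened $\N$-properness of $\overline P$, rather than the mere local properness built into normal form, is indispensable: it is what rules out the collisions among bounded-complexity words that could otherwise appear in the consistency conditions. The step is the Lie-theoretic analogue of the local-to-global nilprogression lifting of \cite[Lemma C.3]{bgt} combined with the Lie-algebra extraction of \cite[\S 9]{bgt}.
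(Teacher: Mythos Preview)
Your approach is correct and genuinely different from the paper's. The paper proceeds by induction on $r$: assuming $L_2$, $X_2,\dots,X_r$, and $\phi_2$ have been built for the sub-nilprogression on $\overline u_2,\dots,\overline u_r$, it lifts the conjugation action of $\overline u_1$ to an automorphism $\tilde\eta$ of $\Gamma_2$, extends $\tilde\eta$ first to $\Gamma_2({}^*\Q)$ and then to a smooth automorphism of $L_2$, embeds $\tilde\eta$ in a one-parameter family using the simply connected nilpotent structure of the automorphism group, and then sets $L := {}^*\R \ltimes_{\tilde\eta} L_2$. This hands-on semidirect-product construction sidesteps any abstract consistency check: at each stage the Lie group is built explicitly, and the injectivity of $\phi_2$ on bounded dilates (from the previous step) is what permits the lift of $\tilde\eta$. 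By contrast, you present $\Gamma$ all at once by its commutator relations and invoke Mal'cev completion; the paper in fact notes this route in the remark immediately following its proof. Your key insight---that the confluence/consistency overlaps for the polycyclic rewriting system involve only normal forms with exponents $O(N_k)$, which can therefore be certified in $N(H)/H$ via $\N$-properness---is exactly the right substitute for the paper's inductive scaffolding. The trade-off is that your argument leans on the external machinery of consistent polycyclic presentations and Mal'cev completion, whereas the paper's is more self-contained but requires carefully tracking the automorphism extension through $\Gamma_2 \subset \Gamma_2({}^*\Q) \subset L_2$.
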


\begin{proof}  We induct on $r$.  The case $r=0$ is vacuously true.  We could use this as the base case, but the $r=1$ case is also easily verified directly, by setting $L = \log L := {}^* \R$ (with $L$ expressed using additive notation), $X_1 := 1$, and $\phi(n) := \overline{u}_1^n$ for any nonstandard integer $n$.

Now suppose inductively that $r>1$ and that the claim has already been proven for $r-1$.  In particular, there exists a nonstandard simply connected nilpotent Lie group $L_2$ of dimension $r-1$, a linear basis $X_2,\dots,X_r$ of $\log L_2$ over ${}^*\R$, and a homomorphism $\phi_2: \Gamma_2 \to N(H)/H$ from the nonstandard group $\Gamma_2 \leq L_2$ generated by $\exp( X_2), \dots, \exp(X_r)$ to $N(H)/H$ such that $\exp(X_i) = \overline{u}_i$ for $i=2,\dots,r$, and such that \eqref{xijk} holds for $2 \leq i,j \leq r$ and structure constants $c_{ijk}$ with the stated properties, and with $\phi_2$ injective on $P( \exp(X_2),\dots,\exp(X_r); CN_2,\dots,CN_r)$ for any standard $C$.

From the normal form hypothesis on $P$, we have
$$ [\overline{u}_i, \overline{u}_j] = \overline{u}_{j+1}^{n_{i,j,j+1}} \dots \overline{u}_r^{n_{i,j,r}}$$ 
for $1 \leq i < j \leq r$ and some nonstandard integers $n_{i,j,k} = O( \frac{N_k}{N_i N_j} )$.  From the injectivity of $\phi_2$, we conclude that
$$ [\exp( X_i ), \exp( X_j )] = \exp( X_{j+1} )^{n_{i,j,j+1}} \dots \exp(X_r)^{n_{i,j,r}}$$ 
for $2 \leq i < j \leq r$.  It is clear that these relations on the generators $\exp(X_2),\dots,\exp(X_r)$ define $\Gamma_2$ as a group.

Now, the element $\overline{u_1}$ acts on $N(H)/H$ by conjugation $\eta: g \mapsto \overline{u_1}^{-1} g \overline{u}_1 = g [\overline{u}_1, g]^{-1}$.  In particular,
$$ [\eta(\overline{u}_i), \eta(\overline{u}_j)] = \eta(\overline{u}_{j+1})^{n_{i,j,j+1}} \dots \eta(\overline{u}_r)^{n_{i,j,r}}$$ 
for $2 \leq i < j \leq r$.  Observe that $\eta(\overline{u_i}) = \phi_2( \exp(X'_i) )$ for $2 \leq i \leq r$, where
$$ X'_i := \log( \exp( X_i) \exp( X_r)^{-n_{1,i,r}} \dots \exp( X_{i+1} )^{-n_{1,i,j+1}} ).$$
All of the terms in the above identity then lie in $P( \exp(X_2),\dots,\exp(X_r); CN_2,\dots,CN_r)$ for some standard $C$, so by injectivity of $\phi_2$ we conclude that
$$ [\exp( X'_i ), \exp( X'_j )] = \exp( X'_{j+1} )^{n_{i,j,j+1}} \dots \exp(X'_r)^{n_{i,j,r}}$$ 
for $2 \leq i < j \leq r$.  Thus there exists a nonstandard group homomorphism $\tilde \eta: \Gamma_2 \to \Gamma_2$ that maps $\exp(X_i)$ to $\exp(X'_i)$ for $2 \leq i \leq r$.  Applying the same considerations to the inverse conjugation $\eta^{-1}$ we see that $\tilde \eta$ is invertible, and is thus a nonstandard group automorphism on $\Gamma_2$.

We would like to extend this automorphism from the nonstandard discrete group $\Gamma_2$ to the nonstandard Lie group $L_2$.  We first work in the intermediate nonstandard group $\Gamma_2({}^* \Q)$, defined as the set of all elements $g \in L_2$ such that $g^n \in \Gamma_2$ for some nonstandard positive integer $n$.  From the Baker-Campbell-Hausdorff formula (which has only finitely many terms in the nilpotent group $L$) we see that this is a nonstandard subgroup of $L_2$ that contains $\Gamma_2$.  We define the extension $\tilde \eta: \Gamma_2({}^*\Q) \to \Gamma_2({}^*\Q)$ by setting $\tilde \eta(g) := \tilde \eta(g^n)^{1/n}$ for any $g \in \Gamma_2({}^* \Q)$ and any nonstandard positive integer $n$ with $g^n \in \Gamma_2$, where we write $g^t := \exp( t \log g )$ for $g \in L_2$ and $t \in {}^* \R$.  It is easy to see that this extension is well-defined, and from the Baker-Campbell-Hausdorff formula one can verify that it is a nonstandard group homomorphism; applying the same considerations to the inverse of $\tilde \eta$ we see that $\tilde \eta$ is in fact a nonstandard group automorphism on $\Gamma_2({}^* \Q)$.

The group $\Gamma_2({}^* \Q)$ is an internally dense subgroup of $L_2$, and $\tilde \eta$ is internally locally uniformly continuous, so $\tilde \eta$ extends uniquely to a nonstandard continuous group homomorphism on $L_2$, which on consideration of the inverse is in fact a nonstandard continuous group automorphism on $L_2$.  As all continuous homomorphisms between Lie groups are smooth, $\tilde \eta$ is in fact a nonstandard \emph{smooth} group automorphism on $L_2$.  It induces a corresponding nonstandard Lie algebra automorphism $\log \tilde \eta: \log L_2 \to \log L_2$.  By the Baker-Campbell-Hausdorff formula, we see that $\log \tilde \eta(X_i) - X_i$ lies in the linear span of $X_{i+1},\dots,X_r$ over ${}^* \R$ for $1 \leq i \leq r$.

The space of nonstandard Lie algebra automorphisms $\Phi: \log L_2 \to \log L_2$ with the property that $\Phi(X_i) - X_i$ lies in the linear span of $X_{i+1},\dots,X_r$ over ${}^* \R$ for $1 \leq i \leq r$ can be verified to be a nonstandard simply connected nilpotent Lie group.  In particular, we can define an internal one-parameter group $(\log \tilde \eta)^t: \log L_2 \to \log L_2$ in this group for $t \in {}^* \R$ that depend in an internally continuous fashion on $t$, which in turn defines an internal one-parameter group of nonstandard Lie group automorphisms $\tilde \eta^t: L_2 \to L_2$ that also depend internally continuously on $t$.  This lets us define an $r$-dimensional nonstandard Lie group $L := {}^* \R \ltimes_{\tilde \eta} L_2$, which extends $L_2$ by an internal one-parameter group $\{ \exp( t X_1): t \in {}^* \R \}$ such that $\exp( -tX_1) g \exp( t X_1) = \tilde \eta^t(g)$ for all $t \in {}^* \R$ and $g \in L_2$.  
The map $g \mapsto g^{-1} \tilde \eta^t(g)$ is nilpotent from the structure of $\log \tilde \eta^t$ and the Baker-Campbell-Hausdorff formula, so $L$ is nilpotent; as it is internally homeomorphic to ${}^* \R$, it is also internally simply connected.

From construction, we have
$$ 
\exp( - X_1) \exp( X_j) \exp(X_1) = \exp(X'_j)$$
for $2 \leq j \leq r$, which gives \eqref{xijk} for the remaining case $i=1$ from the Baker-Campbell-Hausdorff formula and a downward induction on $j$.  This identity also yields
$$ \phi_2( \exp( - X_1) \exp(X_j) \exp(X_1) ) = \overline{u}_1^{-1} \phi_2( \exp(X_j) ) \overline{u}_1.$$
Since the $\exp(X_j)$ for $2 \leq j \leq r$ internally generate $\Gamma_2$, we conclude that
$$ \phi_2( \exp( - X_1) g \exp(X_1) ) = \overline{u}_1^{-1} \phi_2( g ) \overline{u}_1$$
and thus
$$ \phi_2( \exp( - nX_1) g \exp(nX_1) ) = \overline{u}_1^{-n} \phi_2( g ) \overline{u}_1^n$$
for all $g \in \Gamma_2$ and $n \in {}^* \Z$.  If we then define $\Gamma := {}^* \Z \ltimes \Gamma_2$ to be the internally discrete subgroup of $L$ generated by $\exp(X_1)$ and $\Gamma_2$, we may thus extend the internal homomorphism $\phi_2: \Gamma_2 \to N(H)/H$ to an internal homomorphism $\phi: \Gamma \to N(H)/H$ such that $\phi(\exp(X_1)) = \overline{u}_1$.

Finally, we have to demonstrate the injectivity of $\phi$ on $P( \exp(X_1),\dots,\exp(X_r); O(N_1),\dots,O(N_r))$.  Suppose for contradiction that injectivity failed.  Gathering terms, we obtain a collision of the form
$$ \phi( \exp( n_1 X_1 ) \dots \exp( n_r X_r ) ) = \phi( \exp( n'_1 X_1 ) \dots \exp( n'_r X_r ) ) $$
for some $n_i,n'_i = O(N_i)$ with $(n_1,\dots,n_r) \neq (n'_1,\dots,n'_r)$, and thus
$$ \overline{u}_1^{n_1} \dots \overline{u}_r^{n_r} = \overline{u}_1^{n'_1} \dots \overline{u}_r^{n'_r}.$$
But this contradicts the $\N$-properness of $P( \overline{u}_1,\dots,\overline{u}_r; N_1,\dots,N_r)$.
\end{proof}

\begin{remark} One could also construct the Lie group $L$ here using the theory of Mal'cev bases \cite{malcev}.
\end{remark}

Let $L, \phi, X_1,\dots,X_r$ be as in the above proposition. The set $\overline{P}^m$ can now be expressed as
\begin{equation}\label{pam}
 \overline{P}^m = \phi( Q^m )
\end{equation}
where $Q$ is the nilprogression
$$Q :=  P( \exp(X_1),\dots,\exp(X_r); N_1, \dots, N_r) ).$$
It is thus natural to begin analysing the geometry of $Q^m$.  To do this, we perform some calculations related to those in \cite{bt}, \cite{tointon} (see also the analysis of Carnot-Carath\'eodory balls in \cite{nsw}, \cite{wt} for some analogous calculations).  Define a \emph{formal commutator word} to be any string generated by the following rules:
\begin{itemize}
\item For any $i=1,\dots,r$, $i$ and $i^{-1}$ are formal commutator words.
\item If $w_1, w_2$ are formal commutator words, then the strings $[w_1,w_2]$ and $[w_1,w_2]^{-1}$ are formal commutator words.
\end{itemize}
Thus for instance $[[1,2^{-1}]^{-1},[1^{-1},3]]^{-1}$ will be a formal commutator word if $r \geq 3$.  Define the \emph{length} $|w|$ of a formal commutator word $w$ by requiring $i,i^{-1}$ to have length $1$ for $i=1,\dots,r$, and $[w_1,w_2], [w_1,w_2]^{-1}$ to have length $|w_1|+|w_2|$ for any formal commutator words $w_1,w_2$.  Thus for instance $[[1,2^{-1}]^{-1},[1^{-1},3]]^{-1}$ has length $4$.

Given a formal commutator word $w$, we define the element $X_w$ of $\log L$ as follows.
\begin{itemize}
\item For any $i=1,\dots,r$, we keep $X_i$ as before, and write $X_{i^{-1}} = -X_i$.
\item If $w_1,w_2$ are formal commutator words, then $X_{[w_1,w_2]} = \log( [\exp( X_{w_1}), \exp( X_{w_2}) ] )$ and $X_{[w_1,w_2]^{-1}} = - X_{[w_1,w_2]}$.
\end{itemize}
Thus for instance $X_{[[1,2^{-1}],[1^{-1},3]]^{-1}} = \log [[\exp(X_1),\exp(-X_2)],[\exp(-X_1), \exp(X_3)]]^{-1}$.  Finally, we write $N$ for the vector $N := (N_1,\dots,N_r)$, and define $N^w$ for any formal commutator word by the following rules: 
\begin{itemize}
\item For any $i=1,\dots,r$, we write $N^i = N^{i^{-1}} := N_i$.
\item If $w_1,w_2$ are formal commutator words, then $N^{[w_1,w_2]} = N^{[w_1,w_2]^{-1}} := N^{w_1} N^{w_2}$.
\end{itemize}
Thus for instance $N^{[[1,2^{-1}],[1^{-1},3]]^{-1}} = N_1^2 N_2 N_3$, and for any scalar $m$, $(mN)^{[[1,2^{-1}],[1^{-1},3]]^{-1}} = m^4 N_1^2 N_2 N_3$.  Since $N_i \gg 1$ for all $i$, we have $N^w \gg 1$ for all $w$.  Note that $(mN)^w = m^{|w|} N^w$ for any scalar $m$.

Since $[X_i,X_j]$ lies in the (nonstandard) span of $X_k$ for $k > i,j$, we see that $X_w=0$ for all but a bounded number of words $w$ (for instance, $\overline{u}_w=1$ whenever $|w|>r$).  Let $W$ be the collection of formal commutator words for which $X_w \neq 0$.  We enumerate $W = w_1,w_2,\dots,w_k$ in non-decreasing order of length, starting with the generating words $1,\dots,r$, so in particular $[w_i,w_j]$ lies further along $W$ in this enumeration than $w_i$ or $w_j$, or else fails to lie in $W$ at all.  From the Baker-Campbell-Hausdorff formula we see that for any $1 \leq i < j \leq k$, $[X_{w_i}, X_{w_j}]$ is a linear combination (over the standard rationals $\Q$) of the $X_{w_l}$ for $j < l \leq k$.

We have the following rough description of $P( \exp(X_1),\dots,\exp(X_r); m N_1, \dots, m N_r)$ from \cite{tointon}:

\begin{proposition}\label{mprop}  Let $m$ be a nonstandard natural number.
\begin{itemize}
\item[(i)]  One has
$$ \exp( n_1 X_{w_1} ) \dots \exp( n_k X_{w_k} ) \in Q^{O(m)} $$
whenever $n_1,\dots,n_k$ are nonstandard integers with $n_j = O( (mN)^{w_j} )$ for $j=1,\dots,r$.
\item[(ii)]  Conversely, every element $g$ of $Q^m$ can be written in the form
$$ g = \exp( n_1 X_{w_1} ) \dots \exp( n_k X_{w_k} ) $$
where $n_1,\dots,n_k$ are nonstandard integers with $n_j = O( (mN)^{w_j} )$ for $j=1,\dots,r$.
\end{itemize}
\end{proposition}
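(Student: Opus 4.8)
The plan is to establish the two inclusions (i) and (ii) separately; together they assert that $Q^m$ is commensurate with the ``box'' $\{\exp(n_1X_{w_1})\cdots\exp(n_kX_{w_k}): |n_j| = O((mN)^{w_j})\}$, which is the nilprogression analogue of the Carnot--Carath\'eodory ball--box estimates of \cite{nsw} (see also \cite{wt}), and I would follow the commutator calculus of \cite{bt}, \cite{tointon}. The toolkit is the arithmetic of the nonstandard simply connected nilpotent Lie group $L$ of Proposition \ref{lak}: the Baker--Campbell--Hausdorff formula holds globally (boundedly many terms, by nilpotency), and one has the identity $gh = hg[g,h]$ from \eqref{gh}. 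The structural input I would extract first is that the bound $c_{ijk} = O(N_k/(N_iN_j))$ of \eqref{xijk} propagates, by induction on $|w|$, to a description of each $X_w$ as $X_w = \sum_k a^w_k X_k$ with $a^w_k$ a nonstandard rational of bounded denominator satisfying $|a^w_k| = O(N_k/N^w)$ (so $a^w_k = 0$ unless $N_k \gg N^w$); equivalently, the Lie bracket respects the weights $N_1,\dots,N_r$. A consequence I would record, by a BCH expansion and weight-counting of the resulting monomials, is that for nonstandard integers $a, b$ with $|a| = O((mN)^{w'})$ and $|b| = O((mN)^{w''})$ one has $[\exp(aX_{w'}), \exp(bX_{w''})] = \exp(ab\,X_{[w',w'']})\cdot v$, where $v$ is a product of boundedly many factors $\exp(c\,X_v)$ with $|v| > |w'| + |w''|$ and $|c| = O((mN)^v)$.

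For part (i) I would reduce, using that $Q^{O(m)}$ is closed under products of boundedly many factors, to showing that a single $\exp(n_j X_{w_j})$ with $|n_j| = O((mN)^{w_j})$ lies in $Q^{O(m)}$; the case $|w_j| = 1$ is immediate from the definition of $Q$. For $w_j = [w', w'']$, I would split $n_j$ by a two-digit expansion in base $\lceil (mN)^{w''}\rceil$ (using $N^{w} \gg 1$) into boundedly many pieces of the form $ab$ with $|a| = O((mN)^{w'})$ and $|b| = O((mN)^{w''})$, and then use the displayed consequence to write each $\exp(ab\,X_{w_j})$ as a commutator $[\exp(aX_{w'}),\exp(bX_{w''})]$ times boundedly many corrections $\exp(c\,X_v)^{-1}$ supported on strictly longer words $v$, with $|c| = O((mN)^v)$. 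Written out via $[g,h] = g^{-1}h^{-1}gh$, the commutator is a product of four factors each lying in $Q^{O(m)}$ by the inductive hypothesis for the shorter words $w', w''$, hence lies in $Q^{O(m)}$; the corrections are budgeted higher-word exponentials, to be handled by the same statement for longer words. Part (ii) I would obtain by the commutator collection process: an element of $Q^m$ is a word of length $O(m\max_i N_i)$ in the letters $\exp(X_i)^{\pm 1}$ using $\exp(X_i)$ at most $O(mN_i)$ times, and repeated use of $gh = hg[g,h]$ collects all $\exp(X_1)^{\pm 1}$ to the front, then all $\exp(X_2)^{\pm 1}$, and so on, each commutator thrown off being rewritten through the higher words by the propagated form of \eqref{xijk}; nilpotency forces termination in the collected form. (Equivalently one inducts on $m$, checking that passing from $Q^m$ to $Q^{m+1}$ enlarges each $|n_j|$ by $O(m^{|w_j|-1}N^{w_j})$, which telescopes to $O((mN)^{w_j})$.)

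The main obstacle in both parts is the weighted bookkeeping: verifying that the exponents of the higher commutator words generated --- by the BCH corrections in (i), by the collection steps in (ii) --- never exceed $O((mN)^{w})$ and do not accumulate. This is exactly the point at which the normal-form hypothesis is used, through $c_{ijk} = O(N_k/(N_iN_j))$: a commutation deposits mass into a higher-weight slot $N_k$ in inverse proportion to the weights $N_iN_j$ of the commuting pair, so the $(mN)^{w}$ budgets stay balanced. The delicacy --- and the reason this is a several-page computation in \cite{bt}, \cite{tointon}, \cite{nsw} rather than a short deduction --- is that realising a word $X_w$ through a commutator calls on \emph{shorter} words while its BCH corrections involve \emph{longer} ones, so neither a pure upward nor a pure downward induction on $|w|$ closes the argument; the fix is to organise the whole reduction as a single sweep through the collected word, disposing of each correction term at the level where it is produced and exploiting the bounded nilpotency class to guarantee that the process terminates.
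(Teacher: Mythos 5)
The paper gives no self-contained proof of this proposition: it cites \cite[Proposition C.1]{tointon} (with a remark on extending from basic to arbitrary commutator words, as in \cite{bt}), and alternatively \cite[(3.2), Proposition 3.10]{bt}. Your sketch is therefore not a different route but a reconstruction of the substance of those cited arguments. The ingredients you identify are exactly the right ones: the propagation of $c_{ijk} = O(N_k/(N_iN_j))$ to the coefficient bound $|a^w_k| = O(N_k/N^w)$ in $X_w = \sum_k a^w_k X_k$ (this is precisely the paper's \eqref{bch}, appearing right after the proposition); the weight-respecting commutator factorisation $[\exp(aX_{w'}),\exp(bX_{w''})] = \exp(ab\,X_{[w',w'']}) \cdot v$ with $v$ a product of strictly longer-word exponentials within budget; and the collection process for part (ii). All of these are the content of Tointon's Proposition C.1 and the Breuillard--Tointon calculations.

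You have also correctly located the genuine technical crux --- that a naive induction on $|w|$ cannot close because the commutator decomposition calls on shorter words while its BCH corrections generate longer ones. Your proposed resolution (``a single sweep through the collected word, disposing of each correction term at the level where it is produced and exploiting the bounded nilpotency class'') gestures at the standard fix: one arranges the reduction so that each step either terminates at generator words $\exp(n_i X_i)$, which lie in $Q^{O(m)}$ directly from the definition of $Q$ and need no induction, or defers to strictly longer words, with nilpotency capping the number of passes. As stated this is a pointer rather than a closed argument, and the honest confession that this is ``a several-page computation'' in \cite{bt}, \cite{tointon}, \cite{nsw} is accurate --- indeed that is precisely why the paper outsources it. In summary: your sketch is structurally sound, matches the cited proofs in approach, and flags the correct delicate point; to match the paper's own treatment one would simply invoke \cite[Proposition C.1]{tointon}.
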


\begin{proof}
This follows from \cite[Proposition C.1]{tointon}.  (Strictly speaking, that proposition as written restricts the words under consideration to be basic commutator words, but as observed in \cite{bt}, the argument extends without difficulty to arbitrary words.)  The claim can also be established from \cite[(3.2), Proposition 3.10]{bt}.
\end{proof}

From Proposition \ref{mprop}(ii) and the Baker-Campbell-Hausdorff formula we also see that every element $g$ of $Q^m$ can be written in the form
$$ g = \exp( a_1 X_{w_1} + \dots + a_k X_{w_k} )$$
where $a_1,\dots,a_k$ are nonstandard rationals with standard denominator and $a_j = O( (mN)^{w_j} )$ for $j=1,\dots,r$.  A further application of the Baker-Campbell-Hausdorff formula gives
\begin{equation}\label{bch}
 X_{w_i} = \sum_{j=1}^r \alpha_{i,j} X_j
\end{equation}
for some coefficients $\alpha_{i,j} = O( N_j / N^{w_i} )$ that are nonstandard rationals with standard denominator.  We can therefore write 
\begin{equation}\label{mom}
 g = \exp( \sum_{j=1}^r b_j X_j )
\end{equation}
with $b_j$ nonstandard rationals with standard denominator, and the vector $\vec b := (b_1,\dots,b_r) \in {}^* \R^r$ lying in the (internal) convex hull $B_m$ of $\pm C (mN)^{w_i} \vec \alpha_i$ for $i=1,\dots,k$, where $C$ is a sufficiently large standard quantity and $\vec \alpha_i \in {}^* \R^r$ is the vector $\vec \alpha_i = (\alpha_{i,1},\dots,\alpha_{i,r})$  Since $\vec \alpha_1,\dots,\vec \alpha_r$ is just the canonical basis for ${}^* \R^r$, this convex hull $B_m$ contains the unit cube (if $C$ is large enough), and so from standard volume packing arguments we see that
$$ | Q^m | \ll \operatorname{vol}(B_m).$$
We also have a matching lower bound:

\begin{proposition}\label{lank}  For any nonstandard positive integer $m$, we have
$$ | Q^{m} | \gg \operatorname{vol}(B_m).$$
\end{proposition}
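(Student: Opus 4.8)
The plan is to match the upper bound $|Q^m|\ll\operatorname{vol}(B_m)$ already established, by exhibiting $\gg\operatorname{vol}(B_m)$ distinct elements of $Q^{O(m)}$; since $|Q^m|\asymp|Q^{m'}|$ whenever $m\asymp m'$ (by \cite[(3.2), Proposition 3.10]{bt}), this is enough. Because $L$ is (internally) simply connected nilpotent, $\exp\colon\log L\to L$ is an internal diffeomorphism, so the map $\vec b\mapsto\exp(b_1X_1+\dots+b_rX_r)$ is injective on ${}^*\R^r$. Hence it suffices to produce $\gg\operatorname{vol}(B_m)$ \emph{integer} vectors $\vec b\in{}^*\Z^r$ with $\exp(\sum_j b_jX_j)\in Q^{O(m)}$.

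Next I would dispose of the (routine) convex geometry of $B_m$. Since $\vec\alpha_i=e_i$ for $1\le i\le r$, the body $B_m$ contains the cross-polytope $\operatorname{conv}\{\pm CmN_ie_i:1\le i\le r\}$, hence a Euclidean ball of radius $\gg 1$ (using $N_i\gg1$); a standard erosion/packing argument then yields $|\tfrac1CB_m\cap{}^*\Z^r|\asymp\operatorname{vol}(B_m)$ for every standard $C\ge1$. (If desired one may also record $\operatorname{vol}(B_m)\asymp\max_{|S|=r}|\det(v_i:i\in S)|$, with $v_i:=C(mN)^{w_i}\vec\alpha_i$, via Carath\'eodory plus the cross-polytope volume formula.) Thus the target reduces to the single assertion that $\exp(\sum_j b_jX_j)\in Q^{O(m)}$ for every $\vec b\in\tfrac1CB_m\cap{}^*\Z^r$, with $C$ a suitably large standard constant.

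To prove this reduced assertion, note that $\vec b\in\tfrac1CB_m$ means precisely that $\sum_j b_jX_j=\sum_i r_iX_{w_i}$ for some nonstandard reals $r_i$ with $|r_i|\le (mN)^{w_i}$ (this is just the definition of $B_m$ together with $X_{w_i}=\sum_j\alpha_{i,j}X_j$). Passing from this ``symmetric'' exponential to the ordered-product form via the Baker--Campbell--Hausdorff formula, one writes $\exp(\sum_i r_iX_{w_i})=\prod_i\exp(r_i'X_{w_i})$ (product in the order of the enumeration of $W$), and one checks that $|r_i'|=O((mN)^{w_i})$; here the key input is that $[X_{w_a},X_{w_b}]$ expands in the $X_k$ with coefficients $O\!\big(N_k/(N^{w_a}N^{w_b})\big)$ (from the bounds $\alpha_{i,j}=O(N_j/N^{w_i})$ together with \eqref{xijk}) and vanishes unless $|w_a|+|w_b|$ is at most the nilpotency class, so that the powers of $m$ produced by the correction terms never exceed the right scale. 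Proposition \ref{mprop}(i) --- after the routine adjustment allowing the exponents $r_i'$ to be nonstandard rationals with bounded denominator rather than integers (pass to a bounded-index enlargement of $Q$, or round and absorb the resulting $O(1)$ error) --- then places $\exp(\sum_j b_jX_j)=\prod_i\exp(r_i'X_{w_i})$ in $Q^{O(m)}$. Combining the three steps gives $|Q^m|\gg|Q^{O(m)}|\gg|\tfrac1CB_m\cap{}^*\Z^r|\gg\operatorname{vol}(B_m)$, as required.

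The main obstacle is the third paragraph: carrying out the weighted bookkeeping for the Baker--Campbell--Hausdorff corrections so that the ordered-product exponents $r_i'$ remain in the box $|r_i'|\lesssim(mN)^{w_i}$ --- equivalently, controlling the nonlinear change of coordinates between the ``$\prod_i\exp(\cdot X_{w_i})$'' parametrisation supplied by Proposition \ref{mprop} and the convex body $B_m$. This is exactly the type of estimate performed in \cite{bt}, \cite{tointon} in the course of proving Proposition \ref{mprop}, now redeployed to yield a matching lower bound; the remaining ingredients (simple connectivity of $L$, the convex geometry of $B_m$, and the lattice-point count) are soft.
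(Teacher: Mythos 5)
Your overall strategy --- reduce via $|Q^m|\asymp|Q^{O(m)}|$, count lattice points in a dilate of $B_m$, and use injectivity of $\exp$ on $\log L$ --- is exactly the paper's, and the BCH ``weighted bookkeeping'' you single out as the crux is also part of the paper's argument. But there is a genuine gap at the end of your third paragraph: the claim that $\exp\bigl(\sum_j b_j X_j\bigr)=\prod_i\exp(r_i'X_{w_i})$ lies in $Q^{O(m)}$ is false in general. The set $Q^{O(m)}$ is contained in the discrete group $\Gamma$ generated by $\exp(X_1),\dots,\exp(X_r)$, whereas $\exp\bigl(\sum_j b_j X_j\bigr)$, even for integer $\vec b$, typically lies outside $\Gamma$ (e.g.\ $\exp(X_1+X_2)=\exp(X_1)\exp(X_2)\exp(-\tfrac12[X_1,X_2]+\cdots)$, and the fractional commutator power is generically not in $\Gamma$). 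Neither of your proposed fixes repairs this: the exponents $r_i'$ are general nonstandard reals (they come from a convex-combination representation of $\vec b$), not rationals with bounded denominator, so no ``bounded-index enlargement of $Q$'' contains $\prod_i\exp(r_i'X_{w_i})$; and if you round each $r_i'$ to an integer $n_i'$, the resulting $\prod_i\exp(n_i'X_{w_i})$ does land in $Q^{O(m)}$ but is no longer equal to $\exp\bigl(\sum_j b_j X_j\bigr)$, so injectivity of $\exp$ no longer furnishes $\gg\operatorname{vol}(B_m)$ \emph{distinct} elements of $Q^{O(m)}$ --- distinct lattice points can round to the same element.

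What the paper actually proves, and what your argument must be adjusted to prove, is the weaker statement $\exp(v)\in Q^{O(m)}\cdot X$ for a set $X\subset L$ of bounded cardinality \emph{independent of} the lattice point $v$. The extra idea needed to bound $|X|$ is this: after the full rounding procedure one is left with a residue $\exp(t_r'X_r)\cdots\exp(t_1'X_1)$ with $t_i'=O(1)$, and because $v$ has integer coefficients and the Baker--Campbell--Hausdorff operations preserve rationality with standard bounded denominator, each $t_i'$ is in fact a bounded nonstandard rational with standard bounded denominator, hence takes only $O(1)$ values; so the residue lies in a fixed set $X$ of bounded cardinality. The count then closes: $\gg\operatorname{vol}(B_m)$ distinct $\exp(v)$ all lie in $Q^{O(m)}X$ with $|X|=O(1)$, giving $|Q^{O(m)}|\gg\operatorname{vol}(B_m)$. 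Your write-up attributes the bounded-denominator property to the pre-rounding exponents $r_i'$, but those have no reason to be rational; the rationality is a feature of the \emph{post-rounding residue}, deduced from the integrality of $\vec b$, and this step (rather than the BCH size bookkeeping) is the genuinely missing ingredient.
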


\begin{proof}  Since $B_m$ and $B_{Cm}$ have comparable volume for any standard $C>0$, it will suffice to show that $|Q^{C_0 m}| \gg \operatorname{vol}(B_m)$ for some standard $C_0$.

 By volume packing, we know that $B_m$ contains $\gg \operatorname{vol}(B_m)$ lattice points in ${}^* \Z^r$ (if $C$ is large enough).  Let $(x_1,\dots,x_r)$ be one of these lattice points, and form the Lie algebra vector
$$ v = \sum_{i=1}^r x_i X_i = \sum_{i=1}^k x_k X_{w_i}$$
with $x_i = O( (mN)^{w_i})$ nonstandard real for $i=1,\dots,r$ (in fact they are nonstandard integer), and $x_{r+1}=\dots=x_k=0$.  Our strategy will be to factorise 
$$\exp(v) = \exp( x_1 X_{w_1} + \dots + x_k X_{w_k} )$$
as an element of $Q^{O(m)}$ times a ``bounded'' error.
By rounding $x_1$ to the nearest integer $n_1$ and then using the Baker-Campbell-Hausdorff formula, we may write
$$ \exp(v) = \exp( n_1 X_{w_1} ) \exp( x'_2 X_{w_2} + \dots + x'_k X_{w_k} ) \exp( t_1 X_{w_1} )$$
where $t_1 = O(1)$ is a nonstandard real, $n_1$ is a nonstandard integer with $n_1 = O( (mN)^{w_1} )$, and $x'_j = O( (mN)^{w_j} )$ are nonstandard reals for $j=2,\dots,k$.  Iterating this procedure, we can obtain a factorisation of the form
$$ \exp(v) = \exp( n_1 X_{w_1} ) \dots \exp( n_k X_{w_k} ) \exp(t_k X_{w_k} ) \dots \exp( t_1 X_{w_1} )$$
where $n_i = O( (mN)^{w_i} )$ are nonstandard integers and $t_i = O(1)$ are nonstandard reals for $i=1,\dots,k$.  In particular, by Proposition \ref{mprop}(i) we have
$$ \exp(v) \in Q^{O(m)} \exp(t_k X_{w_k} ) \dots \exp( t_1 X_{w_1} ).$$
Using \eqref{bch} and the Baker-Campbell-Hausdorff formula (discarding the $N^{w_i}$ denominator in the bounds on $\alpha_{i,j}$), we thus have
$$ \exp(v) \in Q^{O(m)} \exp(y_1 X_1 + \dots + y_r X_r )$$
where $y_1,\dots,y_r$ are nonstandard reals with $y_i = O( N_i)$ for $i=1,\dots,r$.  Repeating the previous factorisation procedure, we then have
$$ \exp(v) \in Q^{O(m)} \exp( n'_1 X_1 ) \dots \exp( n'_r X_r) \exp( t'_r X_r ) \dots \exp( t'_1 X_1 )$$
where $n'_i = O(N_i)$ is a nonstandard integer and $t'_i = O(1)$ is a nonstandard real for each $i=1,\dots,r$.  The product $\exp( n'_1 X_1 ) \dots \exp( n'_r X_r)$ lies in $Q^{O(1)}$, hence 
$$ \exp(v) \in Q^{O(m)} \exp( t'_r X_r ) \dots \exp( t'_1 X_1 )$$
or equivalently
$$ \exp( t'_r X_r ) \dots \exp( t'_1 X_1 ) \in Q^{O(m)} \exp(v).$$
By construction, $v$ is a linear combination of $X_1,\dots,X_r$ with coefficients that are nonstandard rational with standard bounded denominator.  By the Baker-Campbell-Hausdorff formula, the same is therefore true of $\log \exp( t'_r X_r ) \dots \exp( t'_1 X_1 )$, which by further application of the Baker-Campbell-Hausdorff formula and induction on $i$ shows that each $t'_i$ is also a nonstandard rational with standard denominator.  Since $t'_i =O(1)$, we conclude that there are only a bounded number of possibilities for each $t'_i$.  We conclude that
$$ \exp(v) \in Q^{O(m)} X$$
for some set $X \subset L$ of bounded cardinality that is independent of $v$.  Since there are $\gg \operatorname{vol}(B_m)$ possibilities for $\exp(v)$, we thus have $|Q^{C_0 m}| \gg \operatorname{vol}(B_m)$ for some sufficiently large $C_0$, as required.
\end{proof}

From Cramer's rule (and selecting the $r$-tuple of vectors of the form $\pm (mN)^{w_i} \vec \alpha_i$ that have the largest wedge product), every element of $B_m$ can be written as a linear combination of $r$ vectors of the form $(mN)^{w_i} \vec \alpha_i$ with nonstandard real coefficients of size $O(1)$.  Conversely, any such linear combination will lie in $B_m$ if the coefficients are smaller than some sufficiently small standard $\eps>0$.  From this we see that
$$ \operatorname{vol}(B_m) \asymp \sum_{1 \leq i_1 < \dots < i_r \leq k} | (mN)^{w_{i_1}} \vec \alpha_{i_1} \wedge \dots \wedge (mN)^{w_{i_r}} \vec \alpha_{i_r} |.$$
The right-hand side is a polynomial $V(m)$ in $m$ of bounded degree and non-negative coefficients.  From the preceding bounds on $Q^m$ we thus have
$$ |Q^m| \asymp V(m).$$
Using $x+y \asymp \max(x,y)$ and the polynomial nature of $V$, we see that $\log V(m)$ is asymptotic to a piecewise linear continuous function of $\log m$, with boundedly many pieces and all slopes non-negative bounded integers.  This gives an estimate of the form
\begin{equation}\label{qa}
 \log |Q^m| = \log |Q| + f(\log m) + O(1)
\end{equation}
with $f$ of the required form (and with the additional property of being convex).  By \eqref{pam}, this establishes the claim in the ``${}^*\N$-proper'' case where $\phi: \Gamma \to N(H)/H$ is injective.  

Now suppose we are in the remaining case when $\phi$ is non-injective.  Then (by the nonstandard well-ordering principle), there exists a nonstandard natural number $m_0$ such that $\phi$ is injective on $Q^m$ if and only if $m < m_0$; by Proposition \ref{lak} we see that $m_0$ is unbounded.  
From \eqref{qa} we see that the desired estimate \eqref{fam} is already established for $m < m_0$; actually from \eqref{qa} and the monotonicity of $\log |\overline{P}^m|$ in $m$, it is established for $m = O(m_0)$.  It remains to handle the case when $m \geq m_0$.  

Since the homomorphism $\phi$ is not injective in $Q^{m_0}$, there exists a non-identity element $g$ of $Q^{O(m_0)}$ such that $\phi(g)=1$.  If $g$ does not commute with every generator $\exp(X_i)$ of $Q$, then we may replace $g$ with the commutator $[g,\exp(X_i)]$ for some $i=1,\dots,r$, which still lies in $Q^{O(m_0)}$ (with a slightly larger implied constant).  Repeating this procedure a bounded number of times, we may assume without loss of generality that $g$ commutes with every $\exp(X_i)$ and thus with $\Gamma$; since $\Gamma$ is internally cocompact in $L$ and the group operations are polynomial, this implies that $g$ is a central element of $L$.

Next, we observe from Theorem \ref{exist}(ii) (or \eqref{qa}) that $|\overline{P}^m| \asymp |\overline{P}^{m_0}|$ for all $m \asymp m_0$, which by \cite[Corollary 3.11]{tao-product} shows that the $\overline{P}^m$ are ultra approximate groups for all $m \asymp m_0$.  As in the proof of Proposition \ref{npr}, we can then find an ultra approximate group $\tilde A \subset \overline{P}^{O(m_0)}$ commensurate with $\overline{P}^{m_0}$ with a global model $\psi: \langle \tilde A \rangle \to L'$ to a connected, simply connected nilpotent Lie group $L'$.

We have a key dimension reduction estimate:

\begin{proposition}  The dimension of $L'$ is strictly smaller than $r$.
\end{proposition}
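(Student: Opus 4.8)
The plan is to transport the relation $\phi(g)=1$ through the two Lie models and extract a drop in dimension from a bookkeeping of Hirsch lengths. First I would isolate the two properties of $g$ that matter. Since $L$ is a nonstandard simply connected nilpotent Lie group it is torsion-free (each standard fibre is, being diffeomorphic to Euclidean space, so this transfers by {\L}os's theorem), and therefore the non-identity element $g$ has infinite order; moreover $\phi(g)=1$ and $g\in Q^{O(m_0)}\subseteq\Gamma$, so $g$ is a non-identity, infinite-order element of the normal subgroup $\ker(\phi|_\Gamma)$ of $\Gamma$. Consequently $\ker(\phi|_\Gamma)$, lying inside the torsion-free nilpotent group $\Gamma$, has Hirsch length at least $1$. (The centrality of $g$ arranged just above is convenient but is not actually needed for the dimension count.)

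Next I would bound the Hirsch length of $\phi(\Gamma)$. From the construction in Proposition \ref{lak}, $\Gamma$ is built as an iterated ${}^*\Z$-extension $\Gamma={}^*\Z\ltimes\Gamma_2$ of a group of Hirsch length $r-1$, so (internally) $\Gamma$ is a finitely generated torsion-free nilpotent group of Hirsch length exactly $r$; equivalently, $\Gamma$ is a nonstandard cocompact lattice in the $r$-dimensional group $L$, and the ordered products $\exp(X_1)^{n_1}\cdots\exp(X_r)^{n_r}$ are distinct. Since Hirsch length is additive along short exact sequences of (ultraproducts of) nilpotent groups, the nilpotent group $\phi(\Gamma)=\Gamma/\ker(\phi|_\Gamma)$ has Hirsch length at most $r-1$. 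Now $\tilde A\subseteq\overline P^{O(m_0)}=\phi(Q^{O(m_0)})\subseteq\phi(\Gamma)$, so the external group $\langle\tilde A\rangle$ is a subgroup of $\phi(\Gamma)$ (intersecting a length-$(r-1)$ subnormal series of $\phi(\Gamma)$ with cyclic quotients against $\langle\tilde A\rangle$ shows this directly), and hence $\langle\tilde A\rangle$, and therefore its quotient $\psi(\langle\tilde A\rangle)$, has Hirsch length at most $r-1$.

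Finally I would convert this into the statement about $L'$. Because $\psi\colon\langle\tilde A\rangle\to L'$ is a global model, its image $\psi(\langle\tilde A\rangle)$ is a cocompact subgroup of $L'$ whose Malcev completion is all of $L'$ (this is part of the global model formalism of \cite{bgt}, where $L'$ is by construction no larger than the group it models); hence $\dim L'$ equals the Hirsch length of $\psi(\langle\tilde A\rangle)$, which is at most $r-1<r$, as required. The main obstacle is precisely this last link: extracting cleanly from the definition of a global model in \cite{bgt} that $\psi(\langle\tilde A\rangle)$ sits cocompactly in $L'$ with $L'$ as its Malcev completion, so that passing between the (nonstandard) nilpotent group $\psi(\langle\tilde A\rangle)$ and the ambient Lie group $L'$ neither loses nor gains dimension. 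The remaining ingredients — torsion-freeness of $L$, the value $r$ for the Hirsch length of $\Gamma$, and additivity of Hirsch length — transfer routinely from the standard structure theory of lattices in nilpotent Lie groups via {\L}os's theorem.
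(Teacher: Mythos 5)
Your Hirsch-length strategy is genuinely different from the paper's proof, which constructs a standard simply connected nilpotent Lie group $L_0$ of dimension exactly $r$ from the convex body $B_{m_0}$ (using John's theorem and the external subspaces $O(B_{m_0})$, $o(B_{m_0})$), exhibits a surjective Lie group homomorphism $\Phi\colon L_0 \to L'$, and then shows $\Phi$ is not injective via the minimality of $m_0$. The point of that construction is precisely to build an honest bridge from the internal data to a \emph{standard} $r$-dimensional Lie group, so that the dimension comparison takes place entirely among standard objects. Your approach tries to do the bookkeeping directly, but the obstacle you flag at the end is fatal, and there is a second, independent gap earlier.

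Concretely: since $\psi(\tilde A)$ is a compact neighbourhood of the identity in the connected group $L'$, the image $\psi(\langle\tilde A\rangle)$ is an open subgroup of $L'$ and hence all of $L'$ (the paper uses this surjectivity explicitly). So the claim "$\dim L'$ equals the Hirsch length of $\psi(\langle\tilde A\rangle)$" would have to mean "$\dim L'$ equals the Hirsch length of $L'$ as an abstract group", which is false -- $L'\cong\R^{\dim L'}$ has infinite torsion-free rank and hence infinite Hirsch length in the usual sense, and you haven't set up any other notion. Separately, the inference from "$\phi(\Gamma)$ has internal Hirsch length $\le r-1$" to "$\langle\tilde A\rangle$ has Hirsch length $\le r-1$" does not hold: the internal subnormal series of $\phi(\Gamma)$ has internally cyclic quotients isomorphic to (quotients of) ${}^*\Z$, and as an \emph{external} abelian group ${}^*\Z$ has infinite $\Q$-rank (e.g.\ $1$ and any unbounded nonstandard integer are $\Q$-linearly independent), so intersecting the internal series against the external group $\langle\tilde A\rangle$ produces abelian sections of unbounded external rank rather than rank $\le 1$. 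Both of these points are exactly what the paper's passage through $L_0$ is designed to circumvent.
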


\begin{proof}  Recall we have the internally convex body $B_{m_0}$ in ${}^* \R^r$.  This generates two external subspaces (over $\R$) of ${}^*\R^r$; the space $O(B_{m_0})$ consisting of all vectors $v$ of the form $\lambda w$ for some $\lambda \in \R$ and $w \in B_{m_0}$, and the subspace $o(B_{m_0})$ be the set of all vectors $v \in {}^*\R^r$ such that $\lambda v \in B_{m_0}$ for all $\lambda \in \R$.  By John's theorem \cite{john}, we see that $o(B_{m_0})$ is an (external) subspace of $O(B_{m_0})$, with a quotient $O(B_{m_0})/o(B_{m_0})$ that has dimension exactly $r$ (over $\R$).  Since $m_0$ is unbounded, $o(B_{m_0})$ contains $\R^r$, and so in particular $O(B_{m_0})/o(B_{m_0})$ is also the image of $O(B_{m_0}) \cap {}^* \Z^r$ under the projection map coming from quotienting by $o(B_{m_0})$.

Pulling back the Lie algebra structure on $\log L$ under the map $\xi: (v_1,\dots,v_r) \mapsto v_1 X_1 + \dots + v_r X_r$, we obtain a nonstandard nilpotent Lie algebra structure on ${}^* \R^r$.  From the definition of $B_{m_0}$, we see that this Lie bracket preserves $O(B_{m_0})$, with $o(B_{m_0})$ as an external Lie algebra ideal (over $\R$).  Thus the vector space $O(B_{m_0})/o(B_{m_0})$ acquires the structure of an $r$-dimensional nilpotent Lie algebra (over $\R$).  Exponentiating this, we obtain an $r$-dimensional nilpotent Lie group $L_0 := \exp( O( B_{m_0}) / o(B_{m_0}) )$ (over $\R$).

Similarly, $\xi( O(B_{m_0}) )$ is an (external) subspace (over $\R$) of $\log L$ that is closed under Lie bracket, with $\xi(o(B_{m_0}))$ an external Lie algebra ideal, so $\exp( \xi(o(B_{m_0})))$ is a normal subgroup of $\exp( \xi(O(B_{m_0})) )$ in $L$, and as $\xi$ is a Lie algebra isomorphism, we see that the quotient group $\exp( \xi(O(B_{m_0}))) / \exp( \xi(o(B_{m_0})) )$ may be identified with $L_0$  and is thus also an $r$-dimensional nilpotent Lie group over $\R$.

Since $\exp( t_i X_i)$ lies in $\exp( \xi(o(B_{m_0})) )$ for any $i=1,\dots,r$ and bounded $t_i$, the same rounding argument used to prove Proposition \ref{lank} shows that any element of $\exp( \xi(O(B_{m_0})))$ may be factored as an element of $\langle Q^{m_0} \rangle$ and an element of $\exp( \xi(o(B_{m_0})) )$.  
In particular, this implies that the image of $Q^{m_0} \subset \exp( \xi(O(B_{m_0})))$ in $L_0$ is a compact neighbourhood of the identity.

Recall that $\tilde A \subset \overline{P}^{Cm_0} =\phi( Q^{Cm_0})$ for some standard natural number $C$.    The set $Q^{Cm_0} \cap \phi^{-1}(\tilde A)$, when projected onto $L_0$, is then a bounded symmetric set $E$, and from countable saturation it is closed.  Since $\tilde A$ is commensurable with $\overline{P}^{m_0} = \phi(Q^{m_0})$, we see that the image of $Q^{m_0}$ in $L_0$ can be covered by a bounded number of translates of $E$, so that $E$ has positive measure.  By the Steinhaus lemma, this implies that $E^2$ contains a neighbourhood of the identity, and in particular generates $L_0$ as a group.  From this we see that to every group element $h \in L_0$, we may find an element $\tilde h$ in the preimage of $h$ in $\langle Q^{m_0} \rangle$ such that $\phi(\tilde h) \in \langle \tilde A \rangle$.  This element $\tilde h$ is defined up to an element of $\langle Q^{m_0} \rangle \cap \exp( \xi(o(B_{m_0})) )$.

Suppose we have two such preimages $\tilde h, \tilde h'$, then $\tilde h' = \tilde h k$ for some $k \in \langle Q^{m_0} \rangle \cap \exp( \xi(o(B_{m_0})) )$ with $\phi(k) \in \langle \tilde A \rangle$.  For any standard natural number $n$, we have $k^n \in \langle Q^{m_0} \rangle \cap \exp( \xi(o(B_{m_0})) )$, so by arguing as in the proof of Proposition \ref{lank}, we see that $k^n \subset Q^{Cm_0} X$ for some standard $C$ and some set $X$ of bounded cardinality (both independent of $n$), so that $\phi(k)^n \subset \overline{P}^{Cm_0} Y$ for some set $Y$ of bounded cardinality.  As $\tilde A$ is commensurate with $\overline{P}^{m_0}$, we conclude that $\phi(k)^n \subset \tilde A Z$ for some set $Z$ of bounded cardinality.  In particular, from the pigeonhole principle we see that $\phi(k)^n \in \tilde A^2$ for infinitely many $n$, and thus $\psi(\phi(k))^n \subset \psi(\tilde A^2)$ for infinitely many $n$.  But $\psi(\tilde A^2)$ is precompact and $L'$ is a simply connected nilpotent Lie group, which (as can be seen by taking logarithms in $L'$) forces $\psi(\phi(k)) = 1$ and hence $\psi(\phi(\tilde h)) = \psi(\phi(\tilde h'))$.  We can thus define a map $\Phi: L_0 \to L'$ such that
$$ \Phi(h) = \psi(\phi(\tilde h))$$
whenever $h \in L_0$ and $\tilde h \in \langle Q^{m_0} \rangle$ is in the preimage of $h$ with $\phi(\tilde h) \in \langle \tilde A \rangle$.  From construction one can verify that $\Phi$ is a group homomorphism, and is continuous at the identity, and is thus a Lie group homomorphism.  Since $\psi: \langle \tilde A \rangle \to L'$ is surjective, we see that $\Phi$ is surjective also.

As $L_0$ has dimension $r$, this already shows that $L'$ has dimension no larger than $r$.  To show that $L'$ has dimension strictly less than $r$,
it will suffice to show that $\Phi$ is not injective.  To do this, suppose for contradiction that $\Phi$ is injective. Recall from construction of $m_0$ that $\phi$ is not injective on $Q^{m_0}$.  Thus, there exists a non-identity element $g$ of $Q^{2m_0}$ that lies in the kernel of $\phi$.  If $g'$ is the projection of $g$ to $L_0$, we thus see that $g'$ lies in the kernel of $\Phi$, and is thus the identity since we are assuming $\phi$ to be injective.  Arguing as in the proof of Proposition \ref{lank}, we conclude that $g^n \in Q^{C m_0} X$ for all standard $n$, some standard $C$, and some $X$ of bounded cardinality.  Since $Q^{Cm_0}$ is commensurate to $Q^{m_0/4}$, we conclude from the pigeonhole principle that $g^n \in Q^{m_0/2}$ for some positive standard integer $n$.  But $g^n$ is not the identity (as can be seen from taking logarithms) and in the kernel of $\phi$, thus $\phi$ is non-injective on $Q^{m_0/2}$, a contradiction.
\end{proof}

Using this global model $\psi$ in the proof of Proposition \ref{npr}, we may thus find an $\N$-proper ultra coset nilprogression $H' P' \subset \overline{P}^{O(m_0)}$ of rank strictly smaller than $r$ with $|H'P'| \asymp |\overline{P}^{m_0}|$.  
From \eqref{hom} and the induction hypothesis, we have
$$ \log |\overline{P}^{m m_0}| = \log |\overline{P}^{m_0}| + f'(\log m) + O(1)$$
for all nonstandard natural numbers $m$ and some piecewise linear continuous function $f$ with boundedly many pieces and all slopes non-negative bounded integers.  By monotonicity of $\log |\overline{P}^m|$ in $m$, this implies that
$$ \log |\overline{P}^{m}| = \log |\overline{P}^{m_0}| + f'(\log m - \log m_0) + O(1)$$
for all $m \geq m_0$.  Concatenating this with the already established case $m = O(m_0)$ of the estimate \eqref{fam}, we obtain \eqref{fam} for all $m$, as required.

\section{Inverse theorem for polynomial growth of measures}\label{inv-mes-grow}

In this section we prove Theorem \ref{gpg-mes}.  

Just as Theorem \ref{gpg} follows from the nonstandard counterpart in Theorem \ref{gpg-nonst}, we may similarly obtain Theorem \ref{gpg-mes} from a nonstandard analogue.  Define a \emph{nonstandard probability measure} $\mu: G \to {}^* \R^+$ on a nonstandard group $G = \prod_{\mathfrak{n} \to \alpha} G_{\mathfrak n}$ to be an ultralimit of standard probability measures $\mu_{\mathfrak n}: G_{\mathfrak n} \to \R^+$.  One can define the (nonstandard) convolution and $\ell^2$ norm of such a probability measure in the obvious fashion, as well as define what it means for a nonstandard probability measure to be symmetric.  Given a nonstandard subset $E$ of $G$, the quantity $\mu(E)$ is then a nonstandard real number between $0$ and $1$, and given a nonstandard function $f: G \to [0,+\infty]$, the integral $\int_G f\ d\mu \in {}^* [0,+\infty]$ is a nonstandard element of $[0,+\infty]$.

By repeating the ``compactness and contradiction'' arguments used to derive Theorem \ref{gpg} from Theorem \ref{gpg-nonst}, we may thus derive Theorem \ref{gpg-mes} from

\begin{theorem}[Inverse theorem for polynomial growth, nonstandard formulation]\label{gpg-mes-nonst}  Let $\mu$ be a symmetric nonstandard probability measure on a nonstandard group $G$, let $\eps>0$ be standard, and suppose that
\begin{equation}\label{anda-nonst}
\| \mu^{*n} \|_{\ell^2(G)}^{-2} \leq n^{O(1)} \| \mu \|_{\ell^2(G)}^{-2}
\end{equation}
for some unbounded natural number $n$.  Then there exists an ultra coset nilprogression $HP$, and a finite subset $X$ of $G$ of bounded cardinality containing the identity, such that
\begin{equation}\label{hpn-mes-nonst}
 |HP| \ll \| \mu^{*n} \|_{\ell^2(G)}^{-2} 
\end{equation}
and such that
\begin{equation}\label{sdt-mes-nonst}
\int_{G \backslash E} \| x\|_{HP,X}^2\ d\mu(x) \ll \frac{1}{n}
\end{equation}
for some exceptional set $E$ with
\begin{equation}\label{excep-nonst}
\mu(E) \ll \frac{1}{n^{1-\eps}}.
\end{equation}
\end{theorem}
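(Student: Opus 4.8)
The plan is to deduce Theorem~\ref{gpg-mes-nonst} from the set-theoretic results of Sections~\ref{nonst-sec}--\ref{inv-grow} by passing to a level set of a suitable convolution power of $\mu$, and then running a measure-theoretic version of the Sanders semi-metric argument of Section~\ref{inv-grow}. Throughout one may freely replace $\mu$ by $\mu^{*2}$ (and halve the relevant scales) so that the convolution operator $f \mapsto \mu * f$ is positive semidefinite; this does not affect the conclusion. First I would fix a good working scale. Since $\| \mu^{*m} \|_{\ell^2(G)}^{-2}$ is non-decreasing in $m$ and $\| \mu^{*n} \|_{\ell^2(G)}^{-2} \leq n^{O(1)} \| \mu \|_{\ell^2(G)}^{-2}$, a dyadic pigeonholing of $m \mapsto \log \| \mu^{*m} \|_{\ell^2(G)}^{-2}$ over $[1,n]$ produces an unbounded $n_0$ with $n^{1-\eps} \ll n_0 \leq n$ such that $\| \mu^{*C n_0} \|_{\ell^2(G)}^{-2} \asymp \| \mu^{*n_0} \|_{\ell^2(G)}^{-2}$ for every standard $C$; this ``local stabilisation'' is the measure counterpart of the step $|A^{100 n_0}| \ll |A^{n_0}|$ from Section~\ref{inv-grow}, and the fact that we can only reach $n_0$ slightly below $n$ (rather than $n_0 \asymp n$, which would need control on the walk beyond time $n$) is what ultimately forces the $n^\eps$-type losses.

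Next I would extract an ultra approximate group. Using self-adjointness one has $\| \mu^{*n_0} \|_{\ell^\infty(G)} \asymp \| \mu^{*n_0} \|_{\ell^2(G)}^2 =: \sigma$ and, by the stabilisation, $\| \mu^{*2 n_0} \|_{\ell^2(G)}^2 \asymp \sigma$. Hence the level set $A := \{ g \in G : \mu^{*n_0}(g) \geq \sigma / 2 \}$ satisfies $|A| \asymp \sigma^{-1} \asymp \| \mu^{*n_0} \|_{\ell^2(G)}^{-2}$, carries a positive proportion of the $\mu^{*n_0}$-mass, and $\mu^{*n_0}$ is comparable on $A$ to the uniform probability measure $\mu_A$ on $A$. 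The near-equality $\| \mu^{*2 n_0} \|_{\ell^2(G)}^2 \asymp \| \mu^{*n_0} \|_{\ell^2(G)}^2$ says precisely that $\mu^{*n_0}$ has large multiplicative energy, so a noncommutative Balog--Szemer\'edi--Gowers argument (in the form of \cite{tao-product}) yields a symmetric subset containing the identity with small tripling, and hence an ultra approximate group $\tilde A$, commensurable with $A$, of size $\asymp \| \mu^{*n_0} \|_{\ell^2(G)}^{-2}$, on a definite portion of which $\mu^{*n_0}$ is still comparable to $\mu_{\tilde A}$. Applying Proposition~\ref{npr} to $\tilde A$ produces an ultra coset nilprogression $HP$ in normal form, with the $\N$-properness property, commensurable with $\tilde A$; then $|HP| \asymp |\tilde A| \asymp \| \mu^{*n_0} \|_{\ell^2(G)}^{-2} \ll \| \mu^{*n} \|_{\ell^2(G)}^{-2}$, which is \eqref{hpn-mes-nonst}. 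As in Section~\ref{inv-grow} one also produces a bounded set $X$ containing the identity enumerating the cosets of $\langle HP \rangle$ relevant to the walk, with multiplication by generic elements inducing permutations of $X$; elements of $G$ failing to respect this structure are swept into the exceptional set $E$.

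For the quantitative bounds I would introduce the right-invariant nonstandard semi-metric $d(x,y) := \| \mu^{*n_2} * \delta_x - \mu^{*n_2} * \delta_y \|_{\ell^2(G)} / \| \mu^{*n_2} \|_{\ell^2(G)}$, where $\delta_x$ is the point mass at $x$ and $n_2 \asymp n_0$ is chosen as follows: the telescoping $\sum_{n_0 \leq k < 2 n_0} \big( \| \mu^{*k} \|_{\ell^2(G)}^2 - \| \mu^{*(k+1)} \|_{\ell^2(G)}^2 \big) = \| \mu^{*n_0} \|_{\ell^2(G)}^2 - \| \mu^{*2 n_0} \|_{\ell^2(G)}^2 \leq \| \mu^{*n_0} \|_{\ell^2(G)}^2$ and the pigeonhole principle give $n_2 \in [n_0, 2 n_0)$ with $\| \mu^{*n_2} \|_{\ell^2(G)}^2 - \| \mu^{*(n_2+1)} \|_{\ell^2(G)}^2 \ll \frac{1}{n_0} \| \mu^{*n_2} \|_{\ell^2(G)}^2$ (using the stabilisation to pass between scales $n_0$ and $n_2$). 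A direct computation, using symmetry of $\mu$ and the fact that right translation preserves the $\ell^2$ norm, gives $\int_G d(1,x)^2 \, d\mu(x) = 2 \langle \mu^{*n_2},\, \mu^{*n_2} - \mu^{*(n_2+1)} \rangle / \| \mu^{*n_2} \|_{\ell^2(G)}^2 \leq 2 \big( \| \mu^{*n_2} \|_{\ell^2(G)}^2 - \| \mu^{*(n_2+1)} \|_{\ell^2(G)}^2 \big) / \| \mu^{*n_2} \|_{\ell^2(G)}^2 \ll 1/n_0$, where the middle inequality is the operator bound $\langle \mu^{*n_2}, \mu^{*(n_2+1)} \rangle \geq \| \mu^{*(n_2+1)} \|_{\ell^2(G)}^2$ coming from positivity. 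Setting $E := \{ x \in G : d(1,x) \geq c_0 \}$ for a small standard $c_0$, Markov's inequality gives $\mu(E) \ll 1/n_0 \ll 1/n^{1-\eps}$, which is \eqref{excep-nonst}. For $x \notin E$, right-invariance and the triangle inequality give $d(1, x^m) \leq m \, d(1,x)$; once $m \, d(1,x)$ drops below a suitable small constant, the measures $\mu^{*n_2} * \delta_{x^m}$ and $\mu^{*n_2}$ must overlap substantially on $\tilde A$ (by the comparison with $\mu_{\tilde A}$ and the fact that $\mu^{*n_2}(\tilde A) \gg 1$), forcing $x^m \in \tilde A^{-1} \tilde A \subseteq HP^{O(1)}$. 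Running this for all $m$ up to $\asymp 1/d(1,x)$ and feeding it into the $\N$-properness of $P$ exactly as after \eqref{hp} (the progression coordinates of $x^m$ are linear in $m$ up to controlled errors) yields $\| x \|_{HP,X} \ll d(1,x)$, and therefore $\int_{G \backslash E} \| x \|_{HP,X}^2 \, d\mu(x) \ll \int_G d(1,x)^2 \, d\mu(x) \ll 1/n_0$, which is $\ll 1/n$ after the scale bookkeeping; this is \eqref{sdt-mes-nonst}.

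The step I expect to be the main obstacle is the extraction in the second paragraph: producing, from polynomial growth at the single scale $n$, an ultra approximate group of size $\asymp \| \mu^{*n_0} \|_{\ell^2(G)}^{-2}$ that carries a definite share of the convolution mass \emph{and} is concentrated tightly enough for the orbit argument of the third paragraph to pin down $x^m \in HP^{O(1)}$ — together with the careful bookkeeping of the two scales $n_0 \asymp n^{1-O(\eps)}$ and $n$, which is exactly what makes the exceptional mass in \eqref{excep-nonst} come out as $n^{-(1-\eps)}$ and no worse (matching the loss in Theorem~\ref{ilot}). By contrast the random-walk input in the third paragraph is soft, resting only on self-adjointness and positivity of the convolution operator, and the constructions in the second and third paragraphs are otherwise direct transcriptions of Sections~\ref{nonst-sec}--\ref{inv-grow}.
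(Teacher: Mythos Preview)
Your outline follows the paper's approach closely through the first two-thirds: the stabilised scale $n_0$, the Balog--Szemer\'edi--Gowers extraction of an ultra approximate group, Proposition~\ref{npr} to get $HP$, the Sanders-type semi-metric $d$, and the orbit argument yielding $\|x\|_{HP,X} \ll d(1,x)$ are all exactly what the paper does (the paper uses $\nu = (\mu^{*n_1}+\mu^{*n_1+1})/2$ rather than $\mu^{*n_2}$ in the semi-metric, but your positivity shortcut after replacing $\mu$ by $\mu^{*2}$ is a legitimate alternative). The bounds \eqref{hpn-mes-nonst} and \eqref{excep-nonst} then follow as you say.

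There is, however, a genuine gap at the end. Your argument produces
\[
\int_{G\setminus E} \|x\|_{HP,X}^2 \, d\mu(x) \ll \frac{1}{n_0},
\]
and you then assert this ``is $\ll 1/n$ after the scale bookkeeping''. But your pigeonholing only gives $n_0 \gg n^{1-\eps}$, so $1/n_0$ can be as large as $n^{-(1-\eps)}$, which is \emph{not} $\ll 1/n$; no bookkeeping closes this. The paper faces the same obstacle and spends the final third of the proof resolving it: first it shows (by comparing $|HP^{O(n_0^{-1/2})}|$ with $\|\mu\|_{\ell^2}^{-2}$ and $|HP|$ with $n_0^{O(1)}\|\mu\|_{\ell^2}^{-2}$) that only $O(1)$ of the dimensions $N_i$ exceed $n_0^{\eps/8}$, with the $O(1)$ crucially independent of the auxiliary largeness parameter governing the choice of $n_0$; it then deletes the small generators to obtain $H\tilde P$ of uniformly bounded rank. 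If $n/n_0$ is bounded one is done; otherwise one dilates, setting $A := H\tilde P^m$ with $m \asymp (n/n_0)^{1/2}$, observes $A$ is an ultra approximate group, and reapplies \cite[Theorem~4.2]{bgt} and Proposition~\ref{npr} to obtain a new $HP'$ with $H\tilde P^{m_1} \subset HP' \subset H\tilde P^{m_2}$ for $m_1,m_2 \asymp m$. This yields $\|g\|_{HP',X} \ll (n_0/n)^{1/2}\|g\|_{H\tilde P,X}$ on $G\setminus E$, converting the $1/n_0$ bound into the required $1/n$, while the bounded rank of $H\tilde P$ is what keeps $|HP'| \ll (n/n_0)^{O(1)}|HP|$ under control so that \eqref{hpn-mes-nonst} survives. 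This rescaling and rank-reduction step is not optional and is absent from your proposal.

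A minor secondary point: in your orbit argument you write ``$x^m \in \tilde A^{-1}\tilde A \subset HP^{O(1)}$'', but $x$ need not lie in $\langle HP\rangle$, so its powers hop between cosets. What one actually analyses (and what the paper does) is, after writing $x s = \sigma(s) h_s$ with $h_s \in \langle HP\rangle$, the orbit $\{h_s^m\}$; it is $h_s^m$ that is trapped in $HP^{O(1)}$ and fed into the $\N$-properness. This is presumably what you intended, but as written the step is not quite correct.
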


We now prove Theorem \ref{gpg-mes-nonst}.  Henceforth we abbreviate $\ell^2(G)$ as $\ell^2$.
Let $B$ be a large standard quantity to be chosen later.
Let $n'$ be the element of $[1,n]$ that maximises the quantity
$$ \| \mu^{*n'} \|_{\ell^2} (n')^B,$$
then we have
\begin{equation}\label{bing}
\| \mu^{*n'} \|_{\ell^2} (n')^B \geq \| \mu^{*n} \|_{\ell^2} n^B \gg n^{B-O(1)} \| \mu \|_{\ell^2}  \geq n^{B-O(1)} \| \mu^{*n'} \|_{\ell^2} 
\end{equation}
and thus
$$ n^{1-O(1/B)} \leq n' \leq n$$
and in particular
$$ n^{1-\eps/2} = o(n')$$
if $B$ is large enough.

Let $n_0 := \lfloor n' / 100 \rfloor$, then $n_0$ is an unbounded natural number with
$$ \| \mu^{*100n_0} \|_{\ell^2} \ll \| \mu^{*n_0} \|_{\ell^2}.$$
(We allow implied constants to depend on $B$ unless otherwise specified.)

Applying the form of the Balog-Szemer\'edi-Gowers theorem from \cite[Proposition A.1]{bggt}, together with Proposition \ref{npr}, we thus have
\begin{equation}\label{munt}
\sup_{x \in G} \mu^{*10n_0}( xHP ) \gg 1
\end{equation}
for some  ultra coset nilprogression $HP$ in normal form with
\begin{equation}\label{hpn-0}
|HP| \ll 1 / \| \mu^{*n_0} \|_{\ell^2}^2.
\end{equation}
Furthermore, the nilprogression $HP$ is infinitely proper in the sense of Proposition \ref{npr}.

By the pigeonhole principle, we may find $n_1$ in $[n_0,2n_0]$ such that
\begin{equation}\label{n11}
 \| \mu^{*n_1+1} \|_{\ell^2} = \left(1 - O\left(\frac{1}{n_0}\right)\right) \| \mu^{*n_1} \|_{\ell^2}
\end{equation}
and
\begin{equation}\label{hpn-2a}
|HP| \ll 1 / \| \mu^{*n_1} \|_{\ell^2}^2.
\end{equation}
From construction we have
\begin{equation}\label{nop}
 n^{1-\eps/2} = o(n_1).
\end{equation}

Squaring \eqref{n11}, we see that
$$ \int_G \int_G \langle \delta_g * \mu^{*n_1}, \delta_h * \mu^{*n_1} \rangle\ d\mu(g) d\mu(h) = \left(1 - O\left(\frac{1}{n_0}\right)\right) \| \mu^{*n_1} \|_{\ell^2}^2$$
and hence by the cosine rule
\begin{equation}\label{bode}
 \int_G \int_G \| \delta_g * \mu^{*n_1} - \delta_h * \mu^{*n_1}\|_{\ell^2}^2 \ d\mu(g) d\mu(h) \ll \frac{1}{n_0} \| \mu^{*n_1} \|_{\ell^2}^2.
\end{equation}
It will be convenient to manipulate this expression a bit.  Taking square roots, and then using the triangle inequality in $h$, we see that
\begin{equation}\label{sod}
 \left(\int_G \| \delta_g * \mu^{*n_1} - \mu^{*n_1+1}\|_{\ell^2}^2 \ d\mu(g)\right)^{1/2} \ll \frac{1}{n_0^{1/2}} \| \mu^{*n_1} \|_{\ell^2}.
\end{equation}
By Young's inequality we also have
\begin{equation}\label{berry}
\left(\int_G \| \delta_g * \mu^{*n_1+1} - \mu^{*n_1+2}\|_{\ell^2}^2 \ d\mu(g)\right)^{1/2} \ll \frac{1}{n_0^{1/2}} \| \mu^{*n_1} \|_{\ell^2}.
\end{equation}
Also, from \eqref{bode} we have
$$ \left(\int_G \int_G \| \delta_h^{-1} * \delta_g * \mu^{*n_1} -  \mu^{*n_1}\|_{\ell^2}^2 \ d\mu(g) d\mu(h)\right)^{1/2} \ll \frac{1}{n_0^{1/2}} \| \mu^{*n_1} \|_{\ell^2}$$
and thus by the triangle inequality in $g,h$ and the symmetry of $\mu$
$$ \left(\int_G \int_G \| \mu^{*n_1+2} - \mu^{*n_1}\|_{\ell^2}^2 \ d\mu(g) d\mu(h)\right)^{1/2} \ll \frac{1}{n_0^{1/2}} \| \mu^{*n_1} \|_{\ell^2}$$
which when combined back with \eqref{berry} and the triangle inequality gives
$$
\left(\int_G \| \delta_g * \mu^{*n_1+1} - \mu^{*n_1+2}\|_{\ell^2}^2 \ d\mu(g)\right)^{1/2} \ll \frac{1}{n_0^{1/2}} \| \mu^{*n_1} \|_{\ell^2}$$
and thus, on combination with \eqref{sod} we have
$$  \left(\int_G \| \delta_g *\nu - \nu \|_{\ell^2}^2 \ d\mu(g)\right)^{1/2} \ll \frac{1}{n_0^{1/2}} \| \mu^{*n_1} \|_{\ell^2} $$
where $\nu := (\mu^{*n_1} + \mu^{*n_1+1}) / 2$; as $\nu$ and $\mu$ are symmetric, we may reverse this as
$$  \left(\int_G \| \nu * \delta_g - \nu \|_{\ell^2}^2 \ d\mu(g)\right)^{1/2} \ll \frac{1}{n_0^{1/2}} \| \mu^{*n_1} \|_{\ell^2} $$

Thus, if we introduce the right-invariant semi-metric
$$ d(g,h) := \| \nu * \delta_g - \nu * \delta_h \|_{\ell^2} / \| \mu^{*n_1} \|_{\ell^2} $$
then we have
\begin{equation}\label{dag}
  \int_G d(g,1)^2 \ d\mu(g) \ll \frac{1}{n_0}.
	\end{equation}

To use this, we exploit the following structural property of a small ball in the $d$ metric:

\begin{lemma} There exists a standard $\eps > 0$ such that the set $\{ g \in G: d(g,1) \leq \eps \}$ is covered by $O(1)$ left translates of $HP^2$. In particular, it is a nonstandard finite set.
\end{lemma}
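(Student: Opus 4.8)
The plan is to show that, for a sufficiently small \emph{standard} $\eps>0$, the internal set $S_\eps := \{g\in G : d(g,1)\le \eps\}$ is covered by $O(1)$ left translates of $HP^2$; since $HP^2$ is internally finite (being covered by $O(1)$ left translates of $HP$, whose internal cardinality $\asymp 1/\|\mu^{*n_1}\|_{\ell^2}^2$ is finite), the last sentence of the lemma follows at once. Throughout we use $d(g,1)=\|\nu*\delta_g-\nu\|_{\ell^2}/\|\mu^{*n_1}\|_{\ell^2}$ with $\nu=(\mu^{*n_1}+\mu^{*(n_1+1)})/2$, and the pointwise bound $\nu\ge \tfrac12\mu^{*n_1}$, which gives $\|\mu^{*n_1}\|_{\ell^2}\asymp \|\nu\|_{\ell^2}$; hence $g\in S_\eps$ forces $\|\nu*\delta_g-\nu\|_{\ell^2}\ll \eps\|\nu\|_{\ell^2}$.

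First I would record the scale information. From the choice of $n'$ as the maximiser of $\|\mu^{*m}\|_{\ell^2}m^B$ over $m\in[1,n]$, the relation $n_0=\lfloor n'/100\rfloor$, and the monotonicity of $m\mapsto\|\mu^{*m}\|_{\ell^2}$, one extracts the near-stationarity $\|\mu^{*m}\|_{\ell^2}\asymp \|\mu^{*n_0}\|_{\ell^2}=:\theta$ for all $m\asymp n_0$. In particular $\|\nu\|_{\ell^2}\asymp\theta$, and splitting $\mu^{*n_1},\mu^{*(n_1+1)}$ at their midpoints and applying Young's inequality and Cauchy--Schwarz gives $\|\nu\|_{\ell^\infty}\ll\theta^2\asymp\|\nu\|_{\ell^2}^2$ --- i.e. $\nu$ is \emph{flat} at scale $\theta^2$. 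Also \eqref{hpn-2a} gives $|HP|\ll\theta^{-2}\asymp\|\nu\|_{\ell^2}^{-2}$. Finally, writing $\mu^{*10n_0}=\mu^{*(10n_0-n_1)}*\mu^{*n_1}$ (legitimate since $n_1\le 2n_0$) and pigeonholing in \eqref{munt}, there is an $x_0'$ with $\mu^{*n_1}(x_0'HP)\gg 1$, hence $\nu(V)\gg 1$ where $V:=x_0'HP$.

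Next comes a standard symmetry-set argument (in the spirit of Sanders). Fix $g\in S_\eps$. Since right translation is an $\ell^2$-isometry, $\langle\nu*\delta_g,\nu\rangle=\|\nu\|_{\ell^2}^2-\tfrac12\|\nu*\delta_g-\nu\|_{\ell^2}^2\ge(1-O(\eps^2))\|\nu\|_{\ell^2}^2$. Because $\|\nu 1_V\|_{\ell^2}^2\ge \nu(V)^2/|V|\gg\theta^2\asymp\|\nu\|_{\ell^2}^2$, we have $\|\nu-\nu 1_V\|_{\ell^2}\le\sqrt{1-c}\,\|\nu\|_{\ell^2}$ for some standard $c>0$; so, choosing $\eps$ small enough in terms of $c$ and the implied constants above, $\langle\nu*\delta_g,\nu 1_V\rangle\gg\|\nu\|_{\ell^2}^2$. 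On the other hand $\langle\nu*\delta_g,\nu 1_V\rangle=\sum_{x\in V}\nu(xg^{-1})\nu(x)\le\|\nu\|_{\ell^\infty}\,\nu(Vg^{-1})$, so the flatness of $\nu$ forces $\nu(Vg^{-1})=\nu(x_0'HPg^{-1})\gg 1$ for every $g\in S_\eps$.

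Finally I would run a greedy packing argument. As $HP$ is an approximate group in normal form it is symmetric, hence so is $HP^2$, with $(HP)^{-1}(HP)=HP^2$. Choose a maximal family $g_1,\dots,g_k\in S_\eps$ with $g_j^{-1}g_i\notin HP^2$ for all $i\ne j$. If $x_0'HPg_i^{-1}$ and $x_0'HPg_j^{-1}$ met, cancelling $x_0'$ on the left would give $g_j^{-1}g_i\in(HP)^{-1}(HP)=HP^2$, a contradiction; so these sets are pairwise disjoint, and since each carries $\nu$-mass $\gg 1$ while $\nu$ is a probability measure, $k=O(1)$. By maximality, every $g\in S_\eps$ satisfies $g_i^{-1}g\in HP^2$ for some $i$, i.e. $g\in g_iHP^2$; thus $S_\eps\subseteq\bigcup_{i=1}^k g_iHP^2$, as required. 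The part of the argument needing genuine care is less any single estimate than the bookkeeping of sides: one must keep the free variable on the right (as in $Vg^{-1}$, with $V$ a left translate of $HP$) precisely so that the collision relation collapses to the \emph{symmetric} set $HP^2$ and the resulting cover is by \emph{left} translates. The one substantive analytic input is the flatness $\|\nu\|_{\ell^\infty}\asymp\|\nu\|_{\ell^2}^2$ --- itself a consequence of the maximality defining $n'$ --- which is exactly what converts ``$\nu*\delta_g$ nearly equals $\nu$'' into ``$g$ nearly stabilises the translate $V$''.
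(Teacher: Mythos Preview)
Your proof is correct and follows the same packing strategy as the paper: pick a maximal family $g_1,\dots,g_k\in S_\eps$ with the $g_j^{-1}g_i\notin HP^2$, show the associated right-translates $Vg_i^{-1}$ of $V=y_0HP$ are pairwise disjoint and each carry a definite amount of ``mass'', and conclude $k=O(1)$.  The only difference is in the analytic step.  You pass through the flatness bound $\|\nu\|_{\ell^\infty}\ll\|\nu\|_{\ell^2}^2$ (which does follow from the maximality of $n'$, as you say) and an inner-product argument to obtain the $\ell^1$ estimate $\nu(Vg^{-1})\gg 1$.  The paper instead stays in $\ell^2$: from $\nu(V)\gg 1$ and $|HP|\ll\|\mu^{*n_1}\|_{\ell^2}^{-2}$ it gets $\|\nu\|_{\ell^2(V)}\gg\|\mu^{*n_1}\|_{\ell^2}$ by Cauchy--Schwarz, hence $\|\nu*\delta_{g_i}\|_{\ell^2(V)}\gg\|\mu^{*n_1}\|_{\ell^2}$ for $\eps$ small, and then sums $\|\nu\|_{\ell^2}^2\ge\sum_i\|\nu\|_{\ell^2(Vg_i^{-1})}^2\gg k\,\|\mu^{*n_1}\|_{\ell^2}^2$ over the disjoint translates.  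Your route is a little longer but perfectly sound; the paper's avoids the flatness detour at the cost of a slightly less intuitive $\ell^2$ count.
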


\begin{proof}  Let $N$ be a standard natural number, and suppose that there exist $g_1,\dots,g_N$ in $\{ g \in G: d(g,1) \leq \eps \}$ with $g_1 HP, \dots, g_N HP$ disjoint.  We will show that $N$ is bounded by a standard constant $C = O(1)$ independent of $N$, which then gives the claim from a greedy argument.

Since $d(g_i,1) \leq \eps$, we have
$$ \| \nu * \delta_{g_i} - \nu \|_{\ell^2} \leq \eps \| \mu^{*n_1} \|_{\ell^2},$$
but from \eqref{munt} we have
$$ \nu( y_0 HP  ) \gg 1$$
for some $y_0 \in G$, and so by Cauchy-Schwarz
$$ \| \nu \|_{\ell^2(y_0 HP)} \gg |HP|^{-1/2} $$
and thus by \eqref{hpn-2a} and translating
$$ \| \nu \|_{\ell^2( y_0 HP  )} \gg \| \mu^{*n_1} \|_{\ell^2}$$
and so if $\eps$ is small enough
$$ \| \nu * \delta_{g_i} \|_{\ell^2( y_0 HP  )} \gg \| \mu^{*n_1} \|_{\ell^2}$$
and thus on summing in $N$ (using the disjointness of $y_0 HP g_i^{-1}$)
$$ \| \nu \|_{\ell^2} \gg N^{1/2} \| \mu^{*n_1} \|_{\ell^2}.$$
But from Young's inequality we have
$$ \| \nu \|_{\ell^2} \leq \| \mu^{*n_1} \|_{\ell^2}$$
and the claim follows.
\end{proof}

We can then cover $\{ g \in G: d(g,1) \leq \eps \}$ by $O(1)$ left cosets of $\langle HP \rangle$.  By the principle of infinite descent, taking $\eps$ sufficiently small, we may assume that $\{ g \in G: d(g,1) \leq \eps \}$ and $\{ g \in G: d(g,1) \leq \eps' \}$ meet exactly the same set $\overline{X} \subset G / \langle HP \rangle$ of left cosets of $\langle HP \rangle$ for any standard $\eps'>0$.

We can put a (nonstandard) quotient metric $d_{\overline{X}}$ on $\overline{X}$ by declaring 
$$ d_{\overline{X}}( x \langle HP \rangle, y \langle HP \rangle ) := \inf \{ d(g,1): g \in G; d(g,1) \leq \eps; g x \langle HP \rangle = y \langle HP \rangle \},$$
noting that the set in the infimum is always a nonstandard finite set.  By the construction of $\eps$, we see that all distances in $d_{\overline{X}}$ are $o(1)$.  Repeating the proof of Lemma \ref{loma}, we may express $\overline{X}$ as $\{ x \langle HP \rangle: x \in X \}$ for some finite set $X$ containing $1$ of bounded cardinality, such that the cosets $x \langle HP \rangle$ for $x \in X$ are all distinct, and
$$ d(x,x') \asymp d_{\overline{X}}( x \langle HP \rangle, x' \langle HP \rangle )$$
for all $x,x' \in X$.  In particular
\begin{equation}\label{story}
X \subset \{ g \in G: d(g,1) = o(1) \} \subset \{ g \in G: d(g,1) \leq \eps \} \subset X HP^C
\end{equation}
for some standard $C$, and that
\begin{equation}\label{dora}
 d(g,1) \gg d(x,x')
\end{equation}
whenever $x,x' \in X$ and $g \in G$ are such that $gx \in x' \langle HP \rangle$.  By right invariance, $d(g,1) = d(gx, x)$, and thus
$$ d( gx, x' ) \ll d(g,1).$$
We can write $gx = x'h$ for some $h \in HP^C$, thus
$$ d(x', x' h ) \ll d(g,1)$$
and thus by right-invariance and the triangle inequality
$$ d(x', x' h^n ) \ll n d(g,1)$$
for any nonstandard $n$.  In particular, by \eqref{story}, $h^n \in HP^C$ whenever $n d(g,1) \leq \eps'$ for some standard $\eps' > 0$.  By the properness of $HP$ as in Section \ref{inv-grow}, we conclude that
$$ \| h \|_{HP} \ll d(g,1),$$
and thus
$$ \| g \|_{HP,X} \ll d(g,1).$$
This claim is clearly also true when $d(g,1) > \eps$.  We conclude from \eqref{dag} that
\begin{equation}\label{mark}
 \int_G \|g\|_{HP,X}^2 \ d\mu(g) \ll \frac{1}{n_0}.
\end{equation}

Now we control the large dimensions of $HP$.  From Markov's inequality and \eqref{mark} we have $\|g\|_{HP,X} = O(1/n_0^{1/2})$ for a nonstandard set of $\mu$-measure $\gg 1$.  In particular, we see that $g \in X HP^{O(1/n_0^{1/2})}$ for such a set.  From the Cauchy-Schwarz inequality, we conclude that
$$ 1 \ll |X HP^{O(1/n_0^{1/2})}|^{1/2} \| \mu \|_{\ell^2},$$
and thus
$$ |HP^{O(1/n_0^{1/2})}| \gg \| \mu \|_{\ell^2}^{-2}.$$
Meanwhile, from \eqref{hpn-0}, \eqref{bing}, we have
\begin{align*}
|HP| &\ll \| \mu^{*n_0} \|_{\ell^2}^{-2} \\
&\ll \| \mu^{*n'} \|_{\ell^2}^{-2} \\
&\ll (n'/n)^B \| \mu^{*n} \|_{\ell^2}^{-2} \\
&\ll (n'/n)^B n^{O(1)} \| \mu \|_{\ell^2}^{-2} \\
&\ll (n')^{O(1)} \| \mu \|_{\ell^2}^{-2} \\
&\ll n_0^{O(1)} \| \mu \|_{\ell^2}^{-2} 
\end{align*}
if $B$ is large enough, where the implied constant in the $O(1)$ exponents do not depend on $B$.  Thus we have
$$ |HP^{O(1/n_0^{1/2})}| \gg n_0^{-O(1)} |HP|.$$
If we let $L_1,\dots,L_r$ be the dimensions of $HP$, we thus have
$$ \prod_{i=1}^r (1 + \frac{L_i}{n_0^{1/2}}) \gg n_0^{-O(1)} \prod_{i=1}^r (1+L_i)$$
and thus $L_i \gg n_0^{\eps/8}$ for at most $O(1)$ values of $i$, where $O(1)$ can depend on $\eps$ but does not depend on $B$.

From \eqref{mark} and Markov's inequality we know that  $\|g\|_{HP,X} < n_0^{-\eps/4}$ for a nonstandard set of $\mu$-measure $1 - O( n_0^{-1+\eps/4} ) = 1 - O(n^{-1+\eps} )$; we denote the complement of this set as $E$.  Let $H\tilde P$ be the coset nilprogression $HP$ with all generators $u_i$ with $N_i = o( n_0^{\eps/8} )$ removed; one easily checks that $H \tilde P$ is still a coset nilprogression in normal form, and from the previous discussion $H\tilde P$ has rank $O(1)$ independently of $B$.  For $g \not \in E$, we have $\|g\|_{HP,X} < n_0^{-\eps/4}$ and hence $\|g\|_{H\tilde P,X} < n_0^{-\eps/4}$ also; from \eqref{mark} we have
\begin{equation}\label{mark-2}
 \int_{G \backslash E} \|g\|_{H\tilde P,X}^2 \ d\mu(g) \ll \frac{1}{n_0}.
\end{equation}
If $n/n_0$ is bounded then we are now done (using $H\tilde P$ in place of $HP$).  Now suppose $n/n_0$ is unbounded.  Let $m$ be the (nonstandard) integer part of $(n/n_0)^{1/2}$, and set $A := H\tilde P^m$.  Using the normal form of $H\tilde P$, one can verify that $H\tilde P^{m_1}$ is covered by a bounded number of translates of $H\tilde P^{m_2}$ whenever $m_1 \ll m_2$.  In particular, $A$ is an ultra approximate group.  We can give $\langle A \rangle$ the structure of a locally compact (but not Hausdorff) group by declaring the neighbourhoods of the identity to be those sets that contain $H\tilde P^{m'}$ for some $m \ll m' \ll m$; this gives a locally compact Hausdorff model for $\langle A \rangle$ after quotienting out by the closure of the identity (i.e. the intersection of all the $H\tilde P^{m'}$ for $m \ll m' \ll m$).  Applying \cite[Theorem 4.2]{bgt} and Proposition \ref{npr}, we see that $A$ is commensurate to an $\N$-proper ultra coset nilprogression $HP'$ in normal form.  Furthermore, if one goes through the proof of \cite[Theorem 4.2]{bgt} starting with the locally compact Hausdorff model provided above, we see that $HP'$ contains an open neighbourhood of the identity in $\langle A \rangle$ (basically because the local Lie models used in the proof are local quotients of the original locally compact model).  Thus we have
$$ H \tilde P^{m_1} \subset HP' \subset H\tilde P^{m_2}$$
for some $m \ll m_1 \ll m_2 \ll m$.  On the one hand, this implies that
$$ \|g\|_{HP',X} \leq \frac{1}{m_1} \|g\|_{H\tilde P,X} \ll \frac{n_0^{1/2}}{n^{1/2}} \|g\|_{H\tilde P,X}$$
whenever $\|g\|_{H\tilde P,X} < 1$.  It also implies that
\begin{align*}
 |HP'| &\leq |H\tilde P^{m_2}| \\
&\ll m_2^{O(1)} |H\tilde P| \\
&\ll (n/n_0)^{O(1)} |HP|
\end{align*}
where the $O(1)$ exponent does not depend on $B$ due to the bounded rank of $H\tilde P$.  For $B$ large enough, \eqref{hpn-0}, \eqref{bing} then gives
$$ |HP'| \ll \|\mu^{*n} \|_{\ell^2}^{-2}$$
and the claim follows (using $HP'$ in place of $HP$).

\section{Direct theorem for polynomial growth of measures}\label{forward-proof}

We now prove Theorem \ref{forward}.  Again, it suffices to establish a nonstandard variant:

\begin{theorem}[Forward Littlewood-Offord theorem]\label{forward-nonst}   Let $G$ be a nonstandard group with nonstandard discrete symmetric probability measure $\mu$.  Let $n$ be an unbounded natural number.  Suppose that there exists a nonstandard coset progression $HP$ in normal form and a non-empty set $X$ of bounded cardinality such that
\begin{equation}\label{moon}
 \int_{g \in G} \|g\|_{HP,X}^2\ d\mu(g) \ll n^{-1}.
\end{equation}
Then one has
$$ \| \mu^{*n} \|_{\ell^2}^{-2} \ll |HP|.$$
\end{theorem}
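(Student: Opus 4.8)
The plan is to deduce the estimate from the fact that the length-$n$ random walk driven by $\mu$ is confined, with probability $\gg 1$, to an internal set of cardinality $\ll |HP|$. Since $\mu$, and hence $\mu^{*n}$, is symmetric, $\|\mu^{*n}\|_{\ell^2}^2 = \sum_x \mu^{*n}(x)\mu^{*n}(x^{-1}) = \mu^{*2n}(1) = \sum_x \mu^{*n}(x)^2$, so by Cauchy--Schwarz $\|\mu^{*n}\|_{\ell^2}^2 \geq \mu^{*n}(S)^2/|S|$ for every internal set $S$; it thus suffices to exhibit $S$ with $|S| \ll |HP|$ and $\mu^{*n}(S) \gg 1$. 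I would take $S := X HP^C X^{-1}$ for a bounded constant $C$ to be chosen at the end: if $\|g\|_{HP,X} \leq C$, then picking any $x_0 \in X$ and a permutation $\sigma$ nearly attaining the infimum in \eqref{hps-def} gives $g = \sigma(x_0)\bigl(\sigma(x_0)^{-1} g x_0\bigr) x_0^{-1} \in S$, while the volume bound of the normal form gives $|HP^C| \asymp |HP|$, so $|S| \leq |X|^2 |HP^C| \ll |HP|$. Everything therefore reduces to the \emph{confinement estimate}
\begin{equation}\label{fwd-conf}
\P\bigl( \|g_1 \cdots g_n\|_{HP,X} \leq C \bigr) \gg 1
\end{equation}
for a suitable bounded $C$, where $g_1, \dots, g_n$ are independent copies of $\mu$.

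To analyse the walk, observe that \eqref{moon} forces $\|g\|_{HP,X} < \infty$ for $\mu$-a.e.\ $g$; for such $g$ left translation by $g$ permutes the finite family of cosets $\overline X := \{ x \langle HP\rangle : x \in X\}$ (because $\sigma(x)^{-1} g x \in HP^t \subset \langle HP\rangle$ forces $g x\langle HP\rangle = \sigma(x)\langle HP\rangle$), and I select, for $\mu$-a.e.\ $g$, a permutation $\sigma_g \in \operatorname{Sym}(X)$ attaining the infimum in \eqref{hps-def} with the normalisation $\sigma_{g^{-1}} = \sigma_g^{-1}$. Fixing $x \in X$ and setting $x_n := x$, $x_{k-1} := \sigma_{g_k}(x_k)$, one telescopes
\[
(\sigma_{g_1}\cdots\sigma_{g_n})(x)^{-1}\,(g_1\cdots g_n)\, x \;=\; \prod_{k=1}^n z_k, \qquad z_k := \sigma_{g_k}(x_k)^{-1} g_k x_k, \quad \|z_k\|_{HP} \leq \|g_k\|_{HP,X}.
\]
Since $\|g_1\cdots g_n\|_{HP,X} \leq \sup_{x\in X}\|\prod_k z_k\|_{HP}$ and $|X|$ is bounded, I fix one such $x$ and pass to $N(H)/H$, where $\overline P := HP/H = P(\overline u_1, \dots, \overline u_r; N_1, \dots, N_r)$ is in normal form. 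By the standard normal-form coordinatisation (as exploited in Section~\ref{furt-sec}), $\overline{z_k} = \overline u_1^{a_{k,1}}\cdots\overline u_r^{a_{k,r}}$ with $|a_{k,j}| \ll \|g_k\|_{HP,X} N_j$, and collecting the product puts $\overline{\prod_k z_k} = \overline u_1^{b_1}\cdots\overline u_r^{b_r}$ with each $b_j$ a polynomial of bounded complexity in the $a_{k,j'}$ ($j' \leq j$, $1 \leq k \leq n$) whose coefficients are products of structure constants of size $O(N_j/\prod N_{j'})$; one can equally organise this via a lift to a nilpotent Lie group in the spirit of Proposition~\ref{lak} and the Baker--Campbell--Hausdorff formula. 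As $\overline u_1^{b_1}\cdots\overline u_r^{b_r}$ lies in $\overline P^{O(1)}$ once $|b_j| \ll N_j$ for all $j$, the estimate \eqref{fwd-conf} follows from
\begin{equation}\label{fwd-mom}
\E|b_j|^2 \ll N_j^2 \qquad (1 \leq j \leq r, \ x \in X)
\end{equation}
by Markov's inequality and a union bound over the bounded number of pairs $(j,x)$, taking $C$ large.

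The heart of the matter is \eqref{fwd-mom}, and the essential difficulty is that $a_{k,j}$ is \emph{not} independent of the other increments --- it depends on $g_{k+1},\dots,g_n$ through the trajectory $x_k$. I would handle the linear part $\sum_k a_{k,j}$ of $b_j$ by a martingale decomposition relative to the reverse filtration $\mathcal F_k := \sigma(g_{k+1},\dots,g_n)$: since $g_k$ is independent of $\mathcal F_k$ and $x_k$ is $\mathcal F_k$-measurable, $\E[a_{k,j}\mid\mathcal F_k] = S_j(x_k)$, where $S_j(z) := \int P_j\bigl(\overline{\sigma_g(z)^{-1} g z}\bigr)\,d\mu(g)$ and $P_j$ is the $j$-th normal coordinate. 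The differences $a_{k,j} - S_j(x_k)$ then form an orthogonal martingale-difference sequence, so $\E\bigl|\sum_k(a_{k,j} - S_j(x_k))\bigr|^2 = \sum_k \E(a_{k,j} - S_j(x_k))^2 \ll N_j^2 \sum_k \E\|g_k\|_{HP,X}^2 + n\max_z S_j(z)^2 \ll N_j^2$, using \eqref{moon} and $|S_j(z)| \ll N_j(\int\|g\|_{HP,X}^2 d\mu)^{1/2} \ll N_j/\sqrt n$. For the drift $\sum_k S_j(x_k) = \sum_z S_j(z) M_z$, with $M_z := |\{k : x_k = z\}|$, I would use: (i) the symmetry of $\mu$ together with $\sigma_{g^{-1}} = \sigma_g^{-1}$, which through the substitution $w = \sigma_g^{-1}(z)$ yields $\bigl|\sum_{z\in O} S_j(z)\bigr| \ll N_j/n$, where $O$ is the (bounded) orbit --- of the permutation group generated by the $\sigma_g$, $g \in \operatorname{supp}\mu$ --- in which the trajectory $(x_k)$ lives; and (ii) that $(x_k)$ is a finite irreducible Markov chain on $O$ whose stationary measure is uniform (each $\sigma_g$ being a bijection of $O$), so $\E M_z = n/|O| + O(1)$ and $\operatorname{Var}(M_z) \ll n$. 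Writing $\sum_z S_j(z) M_z = \tfrac{n}{|O|}\sum_z S_j(z) + \sum_z S_j(z)(M_z - \tfrac{n}{|O|})$ then bounds its second moment by $\ll n^2(N_j/n)^2 + (N_j/\sqrt n)^2 \cdot n \ll N_j^2$.

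Finally, the higher-degree (commutator) monomials in $b_j$ are multilinear expressions $\sum\prod_i a_{k_i, j_i}$ over time-ordered tuples of bounded length, each carrying a coefficient of size $O(N_j/\prod_i N_{j_i})$. Expanding the second moment and grouping by coincidences among the indices, the fully diagonal contributions are $\ll \prod_i N_{j_i}^2$ (each free index contributing $\sum_k \E\|g_k\|_{HP,X}^2 \ll 1$), and the terms in which some index occurs an odd number of times are killed up to lower-order errors by the approximate mean-zero property $\E[a_{k,j}\mid\mathcal F_k] = O(N_j/\sqrt n)$ and the independence of $g_k$ from $\mathcal F_k$; multiplication by the coefficient gives $\ll N_j^2$, completing \eqref{fwd-mom}. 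I expect the main obstacle to be precisely this coupling between the ``base'' walk on the finite coset set $\overline X$ and the ``fibre'' walk inside $\overline P$: getting the martingale structure, the near-vanishing of $\sum_{z\in O} S_j(z)$, and the multilinear cancellation to hold simultaneously with the right dependence on the $N_j$ is where the real work lies, whereas the normal-form / Baker--Campbell--Hausdorff bookkeeping, although lengthy, is routine given Sections~\ref{inv-grow}--\ref{furt-sec}.
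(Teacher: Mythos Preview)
Your overall architecture---reduce to a confinement estimate for the walk, telescope through the permutations $\sigma_{g_k}$, pass to normal-form coordinates, and bound second moments coordinate by coordinate---is a reasonable plan, and it is genuinely different from what the paper does (the paper instead builds a smooth bump function $\Psi$ on $XHP^{C\eps}$ and shows $\|\mu*\Psi-\Psi\|_{\ell^2}\ll |HP|^{1/2}/n$). But the drift step has a real gap.

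The problem is your step (ii). The chain $(x_k)$ on the orbit $O$ is a finite Markov chain with doubly stochastic transition matrix, so the uniform measure is stationary, but nothing in the hypotheses forces its spectral gap to be bounded below by a standard positive constant: the transition probabilities $p_{zw}=\mu(\{g:\sigma_g(z)=w\})$ are nonstandard reals and may be infinitesimal. When the gap is $o(1)$, both of your claims $\E M_z=n/|O|+O(1)$ and $\operatorname{Var}(M_z)\ll n$ fail, and the bound on $\sum_z S_j(z)(M_z-n/|O|)$ collapses. The antisymmetry you exploit does give the orbit-sum bound $\bigl|\sum_{z\in O}S_j(z)\bigr|\ll N_j/n$, but that alone is not enough to control $\sum_k S_j(x_k)$ without mixing. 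The same uncontrolled drift then contaminates your treatment of the higher-degree multilinear terms: once $\E[a_{k,j}\mid\mathcal F_k]=S_j(x_k)$ is only $O(N_j/\sqrt n)$ rather than essentially zero, the ``odd-index'' cancellations you invoke no longer save the required factors.

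The paper confronts exactly this obstruction, but not via ergodicity. It first observes (as you do) that the matrix $a_{ij}:=\E[\log\phi(h_i)\mid\sigma(i)=j]$ is antisymmetric with $a_{ij}=O(\min(1,p_{ij}^{-1/2}n^{-1/2}))$, and then proves a linear-algebra lemma (Lemma~\ref{stas}): for any symmetric stochastic matrix $(p_{ij})$ and any antisymmetric $(a_{ij})$ with these bounds, the Poisson equation $\sum_j p_{ij}a_{ij}=t_i-\sum_j p_{ij}t_j$ has a solution with $t_i=O(1)$ and, crucially, $\sqrt{p_{ij}}\,(t_i-t_j)=O(n^{-1/2})$. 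This ``gauge change'' $t_i$ cancels the drift \emph{pointwise}, not merely on orbit-average, and the refined bound on $t_i-t_j$ is exactly what compensates for a possibly infinitesimal spectral gap (indeed, the proof of the lemma works eigenvalue by eigenvalue and carefully handles the $1-\lambda_k$ factors). Iterating through the lower central series (Corollary~\ref{jay}) extends this to the nilpotent setting. After the gauge change, one Taylor-expands a smooth bump $\Psi$ centred at the shifted points $\exp(t_s)$: the first-order term vanishes by construction and the second-order term is $O(1/n)$ by \eqref{moon}, giving $\|\mu*\Psi-\Psi\|_{\ell^2}\ll|HP|^{1/2}/n$ and hence the confinement you want after iterating $O(n)$ times. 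If you want to salvage your martingale route, you would need to import precisely this gauge-change lemma to replace the ergodicity step; without it the argument does not close.
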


The derivation of Theorem \ref{forward} from Theorem \ref{forward-nonst} proceeds as in previous sections and is omitted.

Let $\eps>0$ be a small standard quantity to be chosen later.
If we let $\mu'$ be $\mu$ conditioned to the event $\|g\|_{HP,X} < \eps$, then from \eqref{moon} $\mu'$ is a nonstandard symmetric discrete probability measure
with
$$ \mu \geq \left(1 - O(\frac{1}{\eps n})\right) \mu'$$
and thus
$$ \mu^{*n} \gg (\mu')^{*n}$$
with implied constant depending on $\eps$.
Thus we may assume without loss of generality that $\|g\|_{HP,X}<\eps$ on the support of $\mu$, without significantly affecting \eqref{moon}.

Write $\overline{P} = P(\overline{u}_1,\dots,\overline{u}_r; N_1,\dots,N_r)$ denote the nilprogression $\overline{P} := HP/P$.
Let $H\tilde P$ be the ultra coset nilprogression formed from $HP$ by removing those generators $\overline{u}_i$  for which the associated dimension $N_i$ is bounded; as $HP$ is in normal form, it is not difficult to see that $H\tilde P$ is also an ultra coset nilprogression in normal form.  For $\eps>0$ a small enough standard quantity, one has $HP^t=H\tilde P^t$ for all $t<\eps$, and so $\|g\|_{H\tilde P,X}=\|g\|_{HP,X}$ on the support of $\mu$.  Thus, by replacing $HP$ by $H\tilde P$, we may assume without loss of generality that all dimensions $N_i$ are unbounded.

The next step is to represent the coset nilprogression $HP$ locally by a nonstandard Lie group.  By repeated use of the upper triangular property (i), we see that for any nonstandard integers $a_1,\dots,a_r,b_1,\dots,b_r$, we have a multiplication law
$$ \overline{u}_1^{a_1} \dots \overline{u}_r^{a_r} \overline{u}_1^{b_1} \dots \overline{u}_r^{b_r} = 
\overline{u}_1^{P_1(a,b)} \dots \overline{u}_r^{P_r(a,b)}$$
where $a := (a_1,\dots,a_r)$, $b := (b_1,\dots,b_r)$, and $P_1,\dots,P_r: {}^* \Z^r \times {}^* \Z^r \to {}^* \Z$ are polynomials of bounded degree; furthermore, if $a_i,b_i = O(N_i)$ for all $i=1,\dots,r$, then $P_i(a,b) = O(N_i)$ for all $i=1,\dots,r$.

We may uniquely extend the polymomials $P_1,\dots,P_r$ to be polynomials $P_1,\dots,P_r: {}^* \R^r \times {}^* \R^r \to {}^* \R$ of bounded degree; by interpolation (and the unbounded nature of $N_i$) it remains the case that if $a_1,\dots,a_r,b_1,\dots,b_r$ are nonstandard \emph{reals} with $a_i,b_i = O(N_i)$ for all $i=1,\dots,r$, then $P_i(a,b) = O(N_i)$ for all $i=1,\dots,r$.  Since $P_i(0,0)=0$, this implies by further interpolation that if $a_i,b_i = O(\eps N_i)$ for some $0 < \eps < 1$, then $P_i(a,b) = O(\eps N_i)$.  From the local properness of $HP$, we conclude the associativity property
\begin{equation}\label{pabc}
 P( P(a,b), c ) = P( a, P(b,c) ) 
\end{equation}
whenever $a_i,b_i,c_i = O(\eps N_i)$ for $i=1,\dots,r$ and a sufficiently small standard $\eps>0$, where $P: {}^* \R^r \times {}^* \R^r \to {}^* \R^r$ is the polynomial map $P := (P_1,\dots,P_r)$.   Since the $N_i$ are unbounded, this implies from interpolation that the associativity law \eqref{pabc} is in fact valid for \emph{all} $a,b,c \in {}^* \R^r$.  We can thus create a nonstandard Lie group $L$ by setting $L$ to equal the nonstandard vector space ${}^* \R^r$ with multiplication law given by $P$.  From the upper triangular property (i) it is easy to see that this multiplication law is nilpotent, and so $L$ is a nonstandard simply connected Lie group.  In particular, we have a bijective exponential map $\exp: {\mathfrak l} \to L$ from the nonstandard Lie algebra ${\mathfrak l}$ (which as a nonstandard vector space is simply ${}^* \R^r$) to $L$, inverted by a logarithm map $\log: L \to {\mathfrak l}$.  We also have a local representation map $\phi: HP^{C\eps} \to L$ defined (for some suitable large standard $C$ independent of $\eps$) by
$$ \phi(x) := (a_1,\dots,a_r)$$
whenever $x \in HP^{C\eps}$ and $x\ \operatorname{mod}\ H = \overline{u}_1^{a_1} \dots \overline{u}_r^{a_r}$
with $|a_i| \ll \eps N_i$ (with implied constant independent of $\eps$); this is well-defined by local properness if $\eps$ is sufficiently small depending on $C$, and from construction of $L$ we have the local homomorphism property
$$ \phi(xy) = \phi(x) \phi(y)$$
for $x,y \in HP^{C\eps/2}$; also, $H$ lies in the kernel of $\phi$.  Finally, from construction we see that
\begin{equation}\label{sob}
 \| x \|_{HP} \asymp \| \log \phi(x) \|_{\mathfrak l} 
\end{equation}
for $x \in HP^{C\eps}$, where $\|\|_{\mathfrak l}$ is the usual (nonstandard) Euclidean norm on ${\mathfrak l} \equiv {}^* \R^r$.

Let $g$ be chosen at random using the (nonstandard) probability measure $\mu$.  By hypothesis, we can then associate a random permutation $\sigma: X \to X$ and random elements $h_x \in HP$ for $x \in X$ such that
$$ \|h_x \|_{HP} \leq \|g\|_{HP,X}$$
and
$$ gx = \sigma(x) h_x$$
for all $x \in X$.  If there are multiple choices for $\sigma$, we choose amongst them uniformly at random.  The symmetry of $\mu$ then shows that the random tuples
$$ (\sigma, (h_x)_{x \in X})$$
and
$$ (\sigma^{-1}, (h_{\sigma^{-1}(x)}^{-1})_{x \in X})$$
have the same distribution.  From \eqref{moon}, \eqref{sob} we have
\begin{equation}\label{gps}
 \E \|\log \phi(h_s)\|_{\mathfrak l}^2 \ll 1/n
\end{equation}
for all $x \in X$.

For any $i,j \in S$, we let $p_{ij} \in {}^* [0,1]$ denote the (nonstandard) probability that $\sigma(i)=j$, and let $a_{ij} \in {\mathfrak l}$ denote the conditional expectation
\begin{equation}\label{aij}
 a_{ij} = {\mathbf E}(\log(\phi( h_i ))|\sigma(i) = j)
\end{equation}
(with the convention that $a_{ij}=0$ if $p_{ij}=0$).
Observe from the symmetry property mentioned above that $p_{ij}$ is a (nonstandard) symmetric stochastic matrix, in the sense that the $p_{ij}$ are non-negative nonstandard reals with $p_{ij} = p_{ji}$ and
$$ \sum_{j \in X} p_{ij} = 1$$
for all $i \in S$.  Also, we have the crude bound
$$ a_{ij} \ll 1 $$
for all $i,j$, and a further application of symmetry shows that $a_{ij} = -a_{ji}$.  From \eqref{moon} and Cauchy-Schwarz, we also have
$$ a_{ij} \ll p_{ij}^{-1/2} \frac{1}{\sqrt{n}}$$
(this bound is vacuously true if $p_{ij}=0$.

The quantities $a_{ij}$ reflect a certain amount of ``drift'' in the random walk associated to $\mu$.  It would be very convenient if these quantities vanished, or at least obeyed the cancellation condition
$$ \sum_{j \in X} a_{ij} p_{ij} = 0$$
for all $i \in X$.  Such a cancellation need not occur \emph{a priori}.  However it is possible to effectively obtain such a cancellation after a suitable ``gauge change'', which essentially reflects (a continuous version of) the freedom to right-multiply elements of $X$ by small elements in $HP$ (which was already exploited in Sections \ref{inv-grow}, \ref{inv-mes-grow}).  To perform this gauge change, we will need the following linear algebra lemma.

\begin{lemma}\label{stas} Let $P = (p_{ij})_{1 \leq i,j \leq d}$ be a (nonstandard) symmetric stochastic matrix of dimension $d=O(1)$, thus $p_{ij}$ are nonstandard non-negative reals with $p_{ij} = p_{ji}$ and $\sum_{j=1}^d p_{ij} = 1$ for all $i$.  Let $(a_{ij})_{1 \leq i,j \leq d}$ be an anti-symmetric matrix such that $a_{ij} = O(\min( 1, \delta p_{ij}^{-1/2}) )$ for all $i,j$ and some $\delta > 0$.  Then there exist $t_i = O(1)$ for $i=1,\dots,d$ such that
\begin{equation}\label{drift}
 \sum_{j=1}^d a_{ij} p_{ij} = t_i - \sum_{j=1}^d p_{ij} t_j 
\end{equation}
for all $i=1,\dots,d$, and
\begin{equation}\label{sabr}
\sqrt{p_{ij}} (t_i - t_j) = O( \delta )
\end{equation}
for all $i,j$. 
\end{lemma}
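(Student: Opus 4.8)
The plan is to read \eqref{drift} as a discrete Poisson equation. We may assume the $a_{ij}$ are real scalars, the general (${\mathfrak l}$-valued) case following by applying the result to each of the $O(1)$ coordinates. Put $\mathcal L := I - P$, which is the Laplacian of the weighted graph $\Gamma$ on $\{1,\dots,d\}$ whose edge $\{i,j\}$ carries conductance $p_{ij}$ (the diagonal entries being irrelevant to $\mathcal L$). Then \eqref{drift} is exactly $\mathcal L t = b$ with $b_i := \sum_j a_{ij}p_{ij}$, and if we set $f_{ij} := a_{ij}p_{ij}$ then $b=\operatorname{div} f$; by antisymmetry of $(a_{ij})$ and symmetry of $(p_{ij})$ one has $\sum_{i\in S}b_i=0$ for every union $S$ of connected components of $\Gamma$, so $b\perp\ker\mathcal L$ (the space of functions constant on each component) and $\mathcal Lt=b$ is solvable. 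I would take $t:=\mathcal L^{+}b$, i.e.\ the solution with $t\perp\ker\mathcal L$. Adding to $t$ a function constant on each component changes neither \eqref{drift} nor \eqref{sabr}, so it suffices to prove \eqref{sabr} for this $t$ together with the oscillation bound $\max_{C}t-\min_{C}t=O(1)$ on each connected component $C$; normalising $t$ on each component then gives $t_i=O(1)$.

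The estimate \eqref{sabr} is the easy half. The associated electrical flow $\theta_{ij}:=p_{ij}(t_i-t_j)$ has $\operatorname{div}\theta=b$, so by Thomson's principle it minimises the energy $\mathcal E(\psi):=\sum_{\{i,j\}:\,p_{ij}>0}\psi_{ij}^{2}/p_{ij}$ among all flows $\psi$ with $\operatorname{div}\psi=b$; comparing with $f$,
$$\mathcal E(\theta)\le\mathcal E(f)=\sum_{\{i,j\}}a_{ij}^{2}p_{ij}=O(\delta^{2}),$$
since $a_{ij}^{2}p_{ij}=O(\min(1,\delta^{2}/p_{ij}))\,p_{ij}=O(\min(p_{ij},\delta^{2}))=O(\delta^{2})$ and $\Gamma$ has $O(1)$ edges. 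Hence $p_{ij}(t_i-t_j)^{2}=\theta_{ij}^{2}/p_{ij}\le\mathcal E(\theta)=O(\delta^{2})$ for every edge, which is \eqref{sabr}.

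For the oscillation bound I would induct on the pair $(d,|E(\Gamma)|)$ ordered lexicographically. If $\Gamma$ is disconnected, the oscillation is computed componentwise and each component is a strictly smaller instance. If $\Gamma$ is connected and has a bridge $e_0=\{u_0,v_0\}$, write $p_0:=p_{u_0v_0}$ and split $\Gamma$ at $e_0$ into sides $A\ni u_0$ and $B\ni v_0$; the current across $e_0$ equals the total injection $\sum_{i\in B}b_i$, which by antisymmetry collapses to $a_{u_0v_0}p_0$ (the only surviving term being the one from the edge $e_0$ itself), so the ``bridge jump'' satisfies $|t_{u_0}-t_{v_0}|=|a_{u_0v_0}|=O(1)$, while $t|_A$ and $t|_B$ are, up to additive constants, the electrical potentials of the smaller instances obtained by adding a self-loop of weight $p_0$ at $u_0$, resp.\ $v_0$, to keep the matrix stochastic; the induction hypothesis and the triangle inequality finish this case. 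The remaining and most delicate case is when $\Gamma$ is connected and $2$-edge-connected: here I would delete an edge $e_0=\{u_0,v_0\}$ of \emph{minimal} conductance $p_0=\min\{p_{ij}:i\ne j,\ p_{ij}>0\}$, so that $\Gamma':=\Gamma-e_0$ is still connected and has one fewer edge, and form the smaller instance on $\Gamma'$ (conductances unchanged apart from self-loops of weight $p_0$ added at $u_0$ and $v_0$). Its electrical potential $t'$ satisfies $\operatorname{osc}_{\Gamma'}(t')=O(1)$ by induction; since $\mathcal L=\mathcal L_{\Gamma'}+p_0(e_{u_0}-e_{v_0})(e_{u_0}-e_{v_0})^{\top}$ and the two injections differ by $p_0a_{u_0v_0}(e_{u_0}-e_{v_0})$, one gets $t=t'+\gamma z'+(\text{const})$, where $z':=\mathcal L_{\Gamma'}^{+}(e_{u_0}-e_{v_0})$ is the unit-current potential in $\Gamma'$ and $\gamma:=p_0(a_{u_0v_0}-(t_{u_0}-t_{v_0}))$. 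The key point is that \emph{every} edge of $\Gamma'$ still has conductance $\ge p_0$, so a simple $u_0$--$v_0$ path in $\Gamma'$ has at most $d$ edges and $R^{\Gamma'}_{\mathrm{eff}}(u_0,v_0)\le d/p_0$; writing $c':=1/R^{\Gamma'}_{\mathrm{eff}}(u_0,v_0)$ and $\rho:=p_0/c'\le d=O(1)$, evaluating $t=t'+\gamma z'+\text{const}$ at $u_0,v_0$ gives $t_{u_0}-t_{v_0}=\bigl((t'_{u_0}-t'_{v_0})+\rho a_{u_0v_0}\bigr)/(1+\rho)=O(1)$ (using $|t'_{u_0}-t'_{v_0}|\le\operatorname{osc}_{\Gamma'}(t')$ and $|a_{u_0v_0}|=O(1)$), hence $|\gamma|=O(p_0)$; and since $\operatorname{osc}_{\Gamma'}(z')=z'_{u_0}-z'_{v_0}=R^{\Gamma'}_{\mathrm{eff}}(u_0,v_0)=1/c'$ by the maximum principle, $|\gamma|\operatorname{osc}_{\Gamma'}(z')=O(p_0/c')=O(1)$, so $\operatorname{osc}_{\Gamma}(t)\le\operatorname{osc}_{\Gamma'}(t')+|\gamma|\operatorname{osc}_{\Gamma'}(z')=O(1)$, closing the induction.

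I expect the $2$-edge-connected case to be the genuine obstacle: when $\Gamma$ is barely connected the crude bounds on the potential (via effective resistance and the energy estimate of the second paragraph alone) are too weak by a factor that may be far larger than $1$, and one must additionally use the ``$O(1)$'' clause of the hypothesis $a_{ij}=O(\min(1,\delta p_{ij}^{-1/2}))$ — that the flow across a thin edge is small in absolute terms, not merely in energy — which is exactly what enters the estimate of $\gamma$. The only other thing to be careful about is the bookkeeping verifying that the restrictions of $t$ (bridge case) and the corrected potential (2-edge-connected case) really are the electrical potentials of honest smaller instances satisfying the same hypotheses, so that the induction applies.
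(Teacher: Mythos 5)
Your argument is correct but takes a genuinely different route from the paper. The paper solves the equation $(I-P)t=b$ spectrally: it diagonalises $P$, writes $t$ explicitly in the eigenbasis $u_2,\dots,u_d$, and derives everything from a single key estimate $p_{ij}(u_{k,i}-u_{k,j})=O(1-\lambda_k)$ (proved by applying $(I-P)u_k=(1-\lambda_k)u_k$ at the vertices in decreasing order of $u_{k,\cdot}$ and exploiting the symmetry of $P$); the bound $t_i=O(1)$ then drops out from antisymmetry of $a$, and \eqref{sabr} follows by interpolating the trivial $O(1)$ bound on eigenvector oscillations with the key estimate. You instead read $(I-P)t=b$ as a Poisson equation on the weighted graph with conductances $p_{ij}$: your derivation of \eqref{sabr} via Thomson's principle (compare the electrical flow to the competitor $f_{ij}=a_{ij}p_{ij}$ whose energy is $O(\delta^2)$) is notably clean --- shorter and more conceptual than the paper's geometric-mean trick. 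For the $t_i=O(1)$ bound, however, you pay for avoiding spectral theory with a delicate structural induction on $(\#\text{vertices},\#\text{edges})$, splitting into disconnected, bridge, and 2-edge-connected cases, with the 2-edge-connected case requiring a rank-one update identity $\mathcal L=\mathcal L_{\Gamma'}+p_0ww^\top$, an effective-resistance bound $R^{\Gamma'}_{\mathrm{eff}}(u_0,v_0)\le d/p_0$ coming from choosing $e_0$ of \emph{minimal} conductance, and the maximum principle for the unit-current potential $z'$. I checked this case: the identity $t_{u_0}-t_{v_0}=\bigl((t'_{u_0}-t'_{v_0})+\rho a_{u_0v_0}\bigr)/(1+\rho)$, the bound $\rho\le d$, and the conclusion $|\gamma|\operatorname{osc}(z')=O(p_0/c')=O(1)$ are all correct, and the induction closes because $d=O(1)$ bounds the depth. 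The paper also needs a preliminary compactness step to reduce to $p_{ij}>0$; your component decomposition makes this explicit. Net assessment: both proofs are valid, the spectral route is considerably shorter, your electrical route is more transparent for \eqref{sabr} but markedly heavier for the $t_i=O(1)$ part, and both correctly isolate the same conceptual point --- that the ``$O(1)$'' clause in the hypothesis on $a_{ij}$ (absolute boundedness, not just energy control) is what prevents the potential from blowing up across thin edges.
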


\begin{proof} By a compactness argument we may assume that all the $p_{ij}$ are strictly positive, which means that $P$ has a simple eigenvalue at $1$ (with eigenvector consisting of the constant vector with all entries $1/\sqrt{d}$).  Note that the vector $(\sum_{j=1}^d a_{ij} p_{ij})_{i=1}^d$ sums to zero and is thus orthogonal to the constant eigenvector.  If we let $u_2,\dots,u_d$ be an orthonormal eigenbasis of the remaining eigenvalues $\lambda_2,\dots,\lambda_d$, then we have an explicit solution
\begin{equation}\label{ti-exp}
 t_i = \sum_{k=2}^d \frac{1}{1-\lambda_k} (\sum_{l=1}^d \sum_{j=1}^d a_{lj} p_{lj} u_{k,l}) u_{k,i}
\end{equation}
to \eqref{drift}, where $u_{k,i}$ is the $i^{th}$ coefficient of $u_k$.  
We claim the estimate
\begin{equation}\label{plan}
 p_{lj} (u_{k,l} - u_{k,j}) = O( 1 - \lambda_k )
\end{equation}
for all $k,l,j$.  Assuming this estimate, we see from the anti-symmetry of $a_{lj}$ and the bound $a_{lj} = O(1)$ that
$$ \sum_{l=1}^d \sum_{j=1}^d a_{lj} p_{lj} u_{k,l} = O( 1 - \lambda_k )$$
which together with the bounds $u_{k,i} = O(1)$ and \eqref{ti-exp} gives the bounds $t_i = O(1)$.  Next, for any $i,i'$ we have
$$ t_i - t_{i'} = \sum_{k=2}^d \frac{1}{1-\lambda_k} \left(\sum_{l=1}^d \sum_{j=1}^d a_{lj} p_{lj} u_{k,l}\right) (u_{k,i} - u_{k,i'})$$
thanks to \eqref{ti-exp}.  On the one hand we have the trivial bound $u_{k,i} - u_{k,i'} = O(1)$, while from \eqref{plan} we have
$u_{k,i} - u_{k,i'} = O((1-\lambda_k) / p_{lj})$; we take the geometric mean to obtain
\begin{equation}\label{uki}
 u_{k,i} - u_{k,i'} = O((1-\lambda_k)^{1/2} / p_{lj}^{1/2}).
\end{equation}
Similarly, from the anti-symmetry of $a_{l,j}$ and the bound $a_{lj} = O( \delta \sqrt{p_{lj}} )$ we have
$$ \sum_{l=1}^d \sum_{j=1}^d a_{lj} p_{lj} u_{k,l} = O\left( \sum_{l=1}^d \sum_{j=1}^d \delta \sqrt{p_{lj}} |u_{k,l} - u_{k,j}| \right),$$
and hence by \eqref{uki}
$$ \sum_{l=1}^d \sum_{j=1}^d a_{lj} p_{lj} u_{k,l} = O( (1-\lambda_k)^{1/2} \delta ).$$
Putting all this together, we obtain \eqref{sabr}.

It remains to show \eqref{plan}.  The eigenvector equation gives $(1-P) u_k = O( 1- \lambda_k)$, thus
$$ \sum_{j=1}^d p_{ij} (u_{k,i} - u_{k,j}) = O( 1-\lambda_k )$$
for all $i=1,\dots,d$.  If we sort the $u_{k,i}$ in decreasing order,
$$ u_{k,i_{k,1}} \ge u_{k,i_{k,2}} \ge \dots \ge u_{k,i_{k,d}}$$
and apply the above estimate with $i$ equal in turn to $i_{k,1},\dots,i_{k,d}$ and exploit the symmetry of $p_{ij}$, we conclude that
$$ p_{ij} (u_{k,i} - u_{k,j}) = O( 1-\lambda_k )$$
for all $k,i,j$, and the claim follows.
\end{proof}

We can iterate this lemma in the nilpotent Lie group $L$ to obtain

\begin{corollary}\label{jay}  There exists $t_i \in {\mathfrak l}$ for $i \in X$ with
\begin{equation}\label{tii}
t_i = O(1) 
\end{equation}
and
\begin{equation}\label{tib}
 \sqrt{p_{ij}} (t_i - t_j) = O\left( \frac{1}{\sqrt{n}} \right)
\end{equation}
for all $i,j \in X$, such that
$$ \sum_{j \in X} p_{ij} \log( \exp( - t_j) \exp( a_{ij} ) \exp( t_i ) ) = 0 $$
for all $i \in X$.
\end{corollary}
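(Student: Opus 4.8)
The plan is to prove the corollary by induction on $r=\dim L$, using Lemma \ref{stas} at the bottom of a central series filtration and exploiting the centrality of a one-dimensional ideal to make the correction step exactly linear. The case $r=0$ is trivial, and the case when $L$ is abelian is the base case: here $\log(\exp(-t_j)\exp(a_{ij})\exp(t_i)) = a_{ij}+t_i-t_j$ exactly, so the desired identity is $\sum_{j}p_{ij}a_{ij} = -(t_i-\sum_j p_{ij}t_j)$, which follows by applying Lemma \ref{stas} coordinate by coordinate to the anti-symmetric matrix $(-a_{ij})$ with $\delta:=1/\sqrt n$; the hypotheses $a_{ij}=O(1)$ and $a_{ij}=O(p_{ij}^{-1/2}/\sqrt n)$ are precisely the required input bound, and the conclusions of the lemma are exactly \eqref{tii} and \eqref{tib}.

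For the inductive step, let $\mathfrak z\subset\mathfrak l$ be a one-dimensional subspace of the centre of $\mathfrak l$, spanned by a unit vector $z_0$ (this is non-trivial since $L$ is a non-trivial nilpotent group), and set $\bar L:=L/\exp(\mathfrak z)$, a nonstandard simply connected nilpotent Lie group of dimension $r-1$; write $\bar\pi\colon\mathfrak l\to\bar{\mathfrak l}$ for the induced Lie algebra projection, which intertwines $\exp$ and $\log$. The images $\bar a_{ij}:=\bar\pi(a_{ij})$ are still anti-symmetric with $\bar a_{ij}=O(\min(1,p_{ij}^{-1/2}/\sqrt n))$, so the inductive hypothesis provides $\bar t_i\in\bar{\mathfrak l}$ with $\bar t_i=O(1)$, $\sqrt{p_{ij}}(\bar t_i-\bar t_j)=O(1/\sqrt n)$ and $\sum_j p_{ij}\log(\exp(-\bar t_j)\exp(\bar a_{ij})\exp(\bar t_i))=0$. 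Fix a complement of $\mathfrak z$ in $\mathfrak l$ and let $t_i^0\in\mathfrak l$ be the corresponding lift of $\bar t_i$, so that $t_i^0=O(1)$ and $\sqrt{p_{ij}}(t_i^0-t_j^0)=O(1/\sqrt n)$. Put $q_{ij}:=\log(\exp(-t_j^0)\exp(a_{ij})\exp(t_i^0))$ and $e_i:=\sum_{j\in X}p_{ij}q_{ij}$; applying $\bar\pi$ and using the inductive identity shows $e_i\in\mathfrak z$, say $e_i=\epsilon_i z_0$ with $\epsilon_i\in{}^*\R$.

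The heart of the argument is the observation that $(q_{ij})$ inherits enough structure from $(a_{ij})$ to allow Lemma \ref{stas} to be applied again. Anti-symmetry is immediate: since $a_{ji}=-a_{ij}$ one has $\exp(-t_i^0)\exp(a_{ji})\exp(t_j^0)=(\exp(-t_j^0)\exp(a_{ij})\exp(t_i^0))^{-1}$, whence $q_{ji}=-q_{ij}$. For the size bound $q_{ij}=O(\min(1,p_{ij}^{-1/2}/\sqrt n))$: the $O(1)$ bound is trivial, and for the other estimate one performs a routine Baker--Campbell--Hausdorff expansion (finitely many terms, since $L$ has bounded nilpotency class), writes $-t_j^0=-t_i^0-(t_j^0-t_i^0)$, and notes that in every iterated bracket of length $\ge 2$ occurring, the terms all of whose entries are $\pm t_i^0$ vanish, so every surviving term carries at least one factor of $a_{ij}$ or of $t_j^0-t_i^0$ — each of which is $O(p_{ij}^{-1/2}/\sqrt n)$ — with the remaining factors $O(1)$. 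Now let $\tilde q_{ij}\in{}^*\R$ be the $\mathfrak z$-coordinate of $q_{ij}$ with respect to the chosen splitting; then $(\tilde q_{ij})$ is anti-symmetric, $\tilde q_{ij}=O(\min(1,p_{ij}^{-1/2}/\sqrt n))$, and $\sum_j p_{ij}\tilde q_{ij}=\epsilon_i$ (as $e_i$ already lies in $\mathfrak z$). Applying Lemma \ref{stas} with $\delta=1/\sqrt n$ to the anti-symmetric scalar matrix $(-\tilde q_{ij})$ yields $c_i=O(1)$ with $\sqrt{p_{ij}}(c_i-c_j)=O(1/\sqrt n)$ and $c_i-\sum_j p_{ij}c_j=-\epsilon_i$. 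Finally set $t_i:=t_i^0+c_i z_0$; since $c_i z_0$ is central, $\exp(t_i)=\exp(t_i^0)\exp(c_i z_0)$ exactly and central factors pass through products and logarithms, so $\log(\exp(-t_j)\exp(a_{ij})\exp(t_i))=q_{ij}+(c_i-c_j)z_0$, giving $\sum_{j\in X}p_{ij}\log(\exp(-t_j)\exp(a_{ij})\exp(t_i))=e_i+(c_i-\sum_j p_{ij}c_j)z_0=e_i-\epsilon_i z_0=0$, while \eqref{tii} and \eqref{tib} for $t_i$ follow by adding the corresponding bounds for $t_i^0$ and $c_i z_0$. The only genuine obstacle is establishing the anti-symmetry and the $p_{ij}$-weighted smallness of $q_{ij}$; once those are in place the induction runs mechanically, the centrality of $\mathfrak z$ guaranteeing that the correction step produces no further Baker--Campbell--Hausdorff errors.
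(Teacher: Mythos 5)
Your proof is correct, and it takes a slightly different inductive route from the paper's. The paper fixes the Lie algebra $\mathfrak{l}$ and inducts on the index $k$ of the lower central series $\mathfrak{l} = \mathfrak{l}_1 \supset \mathfrak{l}_2 \supset \dots \supset \mathfrak{l}_{s+1} = \{0\}$, proving at each stage that the defect $\sum_j p_{ij}\log(\exp(-t_j)\exp(a_{ij})\exp(t_i))$ can be pushed from $\mathfrak{l}_k$ into $\mathfrak{l}_{k+1}$ by adding a correction $v_i \in \mathfrak{l}_k$ obtained from Lemma \ref{stas} applied coordinate-wise to the projection $b_{ij}$ of the defect; the BCH errors coming from $v_i$ land in $\mathfrak{l}_{k+1}$ because $\mathfrak{l}_k$ is central modulo $\mathfrak{l}_{k+1}$, so they are deferred to the next step. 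You instead induct on $\dim L$, quotienting by a one-dimensional genuinely central ideal $\mathfrak{z}$, solving in the quotient $\bar L$, lifting, and killing the residual $\mathfrak{z}$-component by a single scalar application of Lemma \ref{stas} to $(-\tilde q_{ij})$. The two decompositions differ — you peel off one dimension at a time, the paper peels off one quotient $\mathfrak{l}_k/\mathfrak{l}_{k+1}$ at a time — but they pivot on the same two points: the anti-symmetry of the defect matrix, and the BCH estimate that the new defect ($q_{ij}$ for you, $b_{ij}$ for the paper) is $O(\min(1,p_{ij}^{-1/2}/\sqrt{n}))$, which you argue correctly by setting $A' := -u$, $B := a_{ij}$ and observing that every surviving BCH term carries a factor of $A'$ or $B$. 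What your version buys is that, since $\mathfrak{z}$ is exactly central, the conjugation identity $\log(\exp(-t_j)\exp(a_{ij})\exp(t_i)) = q_{ij} + (c_i - c_j)z_0$ holds with no error term at all; what it costs is the extra bookkeeping of lifting $\bar t_i$ back to $L$ through a chosen complement. Both are valid, and both rely equally on the boundedness of the structure constants of $\mathfrak{l}$ in the rescaled coordinates, which is implicit in the setup of Section \ref{forward-proof}.
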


\begin{proof}  Let 
$${\mathfrak l} = {\mathfrak l}_1 \supset {\mathfrak l}_2 \supset {\mathfrak l}_s \supset {\mathfrak l})_{s+1} = \{0\}$$
be the lower central series for the nilpotent Lie algebra ${\mathfrak l}$. We will prove inductively that for any $1 \leq k \leq s+1$, there exist $t_i$ obeying the bounds \eqref{tii}, \eqref{tib} such that
\begin{equation}\label{squash}
\sum_{j \in X} p_{ij} \log( \exp( - t_j) \exp( a_{ij} ) \exp( t_i ) ) \in {\mathfrak l}_k
\end{equation}
for all $i \in X$.  Taking $k=s+1$ will then give the claim.

The case $k=1$ is trivial (just set $t_i=0$ for all $i$), so suppose inductively that the claim has been proven for some $1 \leq k \leq s$, and that we seek to prove the claim for $k+1$.  With $t_i$ as in the induction hypothesis, let $\pi$ be a linear projection from ${\mathfrak l}$ to ${\mathfrak l}_k$ with bounded coefficients, and write
$$ b_{ij} := \pi(  \log( \exp( - t_j) \exp( a_{ij} ) \exp( t_i ) ) ).$$
Then $b_{ij}$ is anti-symmetric.
From the bounds on $a_{ij}, t_i$ and the Baker-Campbell-Hausdorff formula, one has
$$ b_{ij} = O( 1 )$$
and
$$ b_{ij} = O\left( p_{ij}^{-1/2} \frac{1}{\sqrt{n}} \right)$$
for all $i,j$.  By Lemma \ref{stas} applied to the coefficients of $b_{ij}$, we can find $v_i \in {\mathfrak l}_k$ for $i \in X$ such that
$$ v_i = O(1)$$
and
$$
\sqrt{p_{ij}} (v_i - v_j) = O\left( \frac{1}{\sqrt{n}} \right)
$$
for $i,j \in X$, and
$$ \sum_{j=1}^d b_{ij} p_{ij} = t_i - \sum_{j=1}^d p_{ij} t_j $$
for all $i \in X$.  If one then sets
$$ t'_i := t_i + v_i$$
then the inductive claim follows from multiple applications of the Baker-Campbell-Hausdorff formula.
\end{proof}

Let $t_i$ be as in the above lemma.  We now build a ``smooth bump function'' $\Psi: G \to {}^* \R$ by the formula
$$ \Psi(x) := \sum_{s \in X: x \in s HP^{C\eps}} \psi( \eps^{-1} \log( \exp( -t_s ) \phi( s^{-1} x )  ) )$$
for some sufficiently large standard $C$, where $\psi: {\mathfrak l} \to {}^* \R$ is a  non-negative smooth compactly supported function independent of $\eps$ bounded away from zero near the origin, with all derivatives bounded.  

The function $\Psi$ is supported in $XHP^{C\eps}$, is bounded by $O(1)$, and is $\gg 1$ on a set of cardinality $\gg_\eps |HP|$ (where the subscript denotes the fact that the implied constant can depend on $\eps$), and so
\begin{equation}\label{hap}
 \| \Psi \|_{\ell^2} \asymp_\eps |HP|^{1/2}
\end{equation}
and
\begin{equation}\label{hap-1}
 \| \Psi \|_{\ell^1} \asymp_\eps |HP|
\end{equation}
Now we claim that $\Psi$ is essentially stable under convolution by $\mu$:

\begin{lemma}  We have
$$ \| \mu * \Psi - \Psi \|_{\ell^2} \ll_\eps \frac{1}{n} |HP|^{1/2}.$$
\end{lemma}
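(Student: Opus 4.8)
The plan is to establish the stronger pointwise estimate $|\mu*\Psi(x)-\Psi(x)|\ll_\eps 1/n$ for every $x\in G$, and to note that $\mu*\Psi-\Psi$ is supported on a set of cardinality $\ll_\eps|HP|$; the lemma then follows from $\|\mu*\Psi-\Psi\|_{\ell^2}^2\ll_\eps n^{-2}|HP|$. For the support, both $\Psi$ and $\mu*\Psi$ vanish outside $\bigcup_{s\in X}sHP^{C'\eps}$ for a suitable standard $C'$ (for $\mu*\Psi$ one uses that $\mu$ is supported in $\{g:\|g\|_{HP,X}<\eps\}$ after the reduction already made, so that $g^{-1}s'\in XHP^{O(\eps)}$ for all $s'\in X$), and this set has cardinality $\ll_\eps|HP|$ by the dilation bound $|HP^t|\asymp_t|HP|$ (\cite[Proposition 3.10]{bt}) and $|X|=O(1)$.

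For the pointwise bound, by symmetry of $\mu$ I would write $\mu*\Psi(x)=\E_g\Psi(gx)$ with $g\sim\mu$, and for each such $g$ choose, uniformly among the minimisers, a permutation $\sigma=\sigma_g\in\operatorname{Sym}(X)$ with $gs=\sigma(s)h_s$ and $\|h_s\|_{HP}<\eps$ for all $s\in X$. Substituting $s'=\sigma(s)$ in the sum defining $\Psi(gx)$ and using the local homomorphism property $\phi(h_s\cdot s^{-1}x)=\phi(h_s)\phi(s^{-1}x)$ (valid since both factors, and their product, lie in $HP^{O(\eps)}$), one gets the term-by-term identity
$$\Psi(gx)-\Psi(x)=\sum_{s\in X}\Big(\psi\big(\eps^{-1}(\zeta_s+R_s)\big)-\psi\big(\eps^{-1}\zeta_s\big)\Big),$$
where $\zeta_s:=\log(\exp(-t_s)\phi(s^{-1}x))$ does not depend on $g$, and $R_s:=\log\big(\exp(\beta_s)\exp(\zeta_s)\big)-\zeta_s$ with $\beta_s:=\log(\exp(-t_{\sigma(s)})\exp(\alpha_s)\exp(t_s))$, $\alpha_s:=\log\phi(h_s)$. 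For the boundedly many $s$ for which some summand is nonzero, all of $t_s,\zeta_s,\alpha_s,\beta_s,R_s$ are $O(1)$ (by \eqref{tii}, \eqref{sob}, and $\|h_s\|_{HP}<\eps$), and for the others both terms vanish. Since $L$ is nilpotent, $R_s$ is a finite sum of iterated Lie brackets of $\beta_s$ and $\zeta_s$, each at least linear in $\beta_s$; and Taylor expanding $\psi$ (whose first two derivatives are $O(1)$) gives
$$\psi\big(\eps^{-1}(\zeta_s+R_s)\big)-\psi\big(\eps^{-1}\zeta_s\big)=\eps^{-1}D\psi(\eps^{-1}\zeta_s)[R_s]+O\big(\eps^{-2}\|R_s\|_{\mathfrak l}^2\big).$$

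The crux, where Corollary \ref{jay} is used, is to bound the $g$-average of this. Expanding $\beta_s$ by Baker--Campbell--Hausdorff as a polynomial in $\alpha_s$ with $\sigma(s)=j$ fixed, $\beta_s=c_{j,s}+L_{j,s}(\alpha_s)+Q_{j,s}(\alpha_s)$ with $c_{j,s}=\log(\exp(-t_j)\exp(t_s))$, $L_{j,s}$ linear and $Q_{j,s}(\alpha)=O(\|\alpha\|_{\mathfrak l}^2)$ for bounded $\alpha$, I would condition on $\sigma(s)=j$ (so $\E(\alpha_s\mid\sigma(s)=j)=a_{sj}$ by \eqref{aij}) and sum over $j$ with weights $p_{sj}=\P(\sigma(s)=j)$: the constant and linear parts assemble into $\sum_j p_{sj}\log(\exp(-t_j)\exp(a_{sj})\exp(t_s))$, which vanishes by Corollary \ref{jay}. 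What survives is $\E\beta_s=\sum_j p_{sj}\big(\E(Q_{j,s}(\alpha_s)\mid\sigma(s)=j)-Q_{j,s}(a_{sj})\big)=O(\E\|\alpha_s\|_{\mathfrak l}^2)=O(1/n)$ by Jensen and \eqref{gps}. Moreover $\E\|\beta_s\|_{\mathfrak l}^2\ll\E\|\alpha_s\|_{\mathfrak l}^2+\sum_j p_{sj}\|t_s-t_j\|_{\mathfrak l}^2\ll 1/n$ by \eqref{gps}, \eqref{tib} and $|X|=O(1)$. Since $\zeta_s$ is $g$-independent and $O(1)$, the terms of $R_s$ linear in $\beta_s$ average to $O(\|\E\beta_s\|_{\mathfrak l})=O(1/n)$ and the higher ones to $O(\E\|\beta_s\|_{\mathfrak l}^2)=O(1/n)$, whence $\E R_s=O(1/n)$ and $\E\|R_s\|_{\mathfrak l}^2\ll 1/n$. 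Feeding these into the Taylor expansion, averaging, and summing over the $O(1)$ values of $s$ gives $|\mu*\Psi(x)-\Psi(x)|=|\E(\Psi(gx)-\Psi(x))|\ll_\eps \eps^{-1}n^{-1}+\eps^{-2}n^{-1}\ll_\eps n^{-1}$.

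The main obstacle is exactly this cancellation step: the naive first-order estimate only controls the ``drift'' by $\ll_\eps\eps^{-1}(\E\|\alpha_s\|_{\mathfrak l}^2)^{1/2}\asymp n^{-1/2}$, which is off by a factor of $n^{1/2}$; Corollary \ref{jay} (the ``gauge change'') is precisely what annihilates that first-order term, leaving only the second-order contributions of size $\E\|\alpha_s\|_{\mathfrak l}^2\ll 1/n$. The remaining work is routine bookkeeping: tracking the random permutation $\sigma$, the domain on which $\phi$ is a local homomorphism, and the (terminating) BCH expansions.
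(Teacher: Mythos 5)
Your proposal is correct and follows essentially the same route as the paper: reduce to a pointwise bound $O_\eps(1/n)$ on the support $XHP^{O(\eps)}$, decompose the action of a random $g\sim\mu$ via the permutation $\sigma$ and the small elements $h_s$, Taylor-expand $\psi$ in the Lie algebra coordinates, and invoke Corollary~\ref{jay} (the gauge change) to annihilate the first-order drift term, after which the remaining second-order contributions are $O(1/n)$ by \eqref{gps} and \eqref{tib}. Your re-indexing $s'=\sigma(s)$ and explicit splitting of $\zeta_s$ from $R_s$ is a slightly more careful bookkeeping of the same computation the paper does by fixing a single $s\in X$ and writing $\Psi(sh)$ as a single term.
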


\begin{proof}  Given that $\mu*\Psi-\Psi$ is supported on $XHP^{O(\eps)}$, it will suffice to show that
$$ \mu * \Psi(sh) - \Psi(sh) = O_\eps( 1/n )$$
for all $s \in X$ and $h \in HP^{O(\eps)}$.

We have
$$ \Psi(s h ) = \psi( \log( \exp( - t_s ) \phi(h) ).$$
Let $g$ be drawn at random using $\mu$, and let the random variables $\sigma$ and $h_s$ be chosen as above.  Then we have
\begin{align*}
\mu * \Psi(sh) &= \E \Psi( g s h ) \\
&= \E \Psi( \sigma(s) h_s h ) \\
&= \E \psi( \eps^{-1} \log( \exp( - t_{\sigma(s)} ) \phi(h_s) \exp(t_s) \exp(-t_s) \phi(h) ) ) 
\end{align*}
and hence by Taylor expansion
$$
\mu * \Psi(sh) - \Psi(sh) = \E \Lambda( \log( \exp( - t_{\sigma(s)} ) \phi(h_s) \exp(t_s) ) ) + O( \E \| \log( \exp( - t_{\sigma(s)} ) \phi(h_s) \exp(t_s) ) \|^2 )
$$
where $\Lambda: {\mathfrak l} \to {}^* \R$ is the derivative of $x \mapsto \psi( \eps^{-1} \log( \exp(x) \exp(-t_s) \phi(h) ) )$ at $x=0$. 
By \eqref{gps}, \eqref{tib} the error term here is $O( \frac{\eps}{n} )$, which is acceptable.   As $\Lambda$ is linear with coefficients $O_\eps(1)$, it suffices to show that
$$ \E \log( \exp( - t_{\sigma(s)} ) \phi(h_s) \exp(t_s) ) = O_\eps( 1 / n ).$$
The left-hand side can be expanded as
$$\sum_{s' \in X} p_{ss'} \E(\log( \exp( - t_{s' }) \phi(h_s) \exp(t_s) ) | \sigma(s)=s' ).$$
By the Baker-Campbell-Hausdorff formula, the expression $\log( \exp( - t_{s' }) \phi(h_s) \exp(t_s) )$ is an affine function of $\log\phi(h_s)$, up to an error of $O_\eps( \| \phi(h_s) \|^2 )$.  By \eqref{aij}, \eqref{gps}, we may thus write this expression as
$$\sum_{s' \in X} p_{ss'} \log( \exp( - t_{s' }) \exp( a_{ss'} ) \exp(t_s) ) + O_\eps(1 / n).$$
By Corollary \ref{jay}, the sum vanishes, and the claim follows.
\end{proof}

Iterating this lemma using Young's inequality and the triangle inequality, we see that
$$ \| \mu^{*\lfloor \delta n\rfloor} * \Psi - \Psi \|_{\ell^2} \ll_\eps \delta |HP|^{1/2}$$
for any $\delta > 0$, and hence that
$$ \langle \mu^{*\lfloor \delta n\rfloor}, \Psi * \tilde \Psi \rangle \gg_\eps \| \Psi \|_{\ell^2}^2$$
for some standard $\delta>0$ depending on $\eps$,
where $\tilde \Psi(g) := \Psi(g^{-1})$.  Since $\Psi * \tilde \Psi$ is supported on $XHP^{O(\eps)}$ and has magnitude $O( |HP| )$, we conclude from \eqref{hap} that
$$ \mu^{*\lfloor \delta n\rfloor}( X HP ) \gg_\eps 1$$
and hence by the pigeonhole principle we have
$$ \mu^{*\lfloor \delta n\rfloor}( s HP ) \gg_\eps 1$$
for some $s \in X$.  Convolving $\mu^{*\delta n}$ with itself (and using symmetry and the identity $(sHP)^{-1} HP = HP^2$) we conclude that
$$ \mu^{*2\lfloor \delta n\rfloor}(HP^2) \gg_\eps 1$$
and thus on further convolution one has
$$ \mu^{*2m\lfloor \delta n\rfloor}(HP^{2m}) \gg_{\eps,m} 1$$
for any standard natural number $m$.  By Cauchy-Schwarz (and the fact that $HP$ is an ultra approximate group, which comes from the normal form and unbounded dimensions of $HP$), this implies that
$$ \| \mu^{*2m\lfloor \delta n\rfloor} \|_{\ell^2}^{-2} \ll_{\eps,m} |HP|$$
and hence by monotonicity (and choosing $m$ large enough depending on $\delta$) one has
$$ \| \mu^{*n} \|_{\ell^2}^{-2} \ll_{\eps,\delta} |HP|$$
which gives the required claim.

\section{Abelian inverse Littlewood-Offord theory}\label{ilo-ab}

We now use Theorem \ref{gpg-mes} to reprove Theorem \ref{ilot}.  The key input is the following Fourier-analytic fact.

\begin{proposition}\label{donk}  Let $G = (G,+)$ be an abelian group.  Let $\mu_1,\dots,\mu_n$ be symmetric discrete probability measures, and define 
$$ \mu := \frac{1}{2} \delta + \frac{1}{2n} (\mu_1 * \mu_1 + \dots + \mu_n * \mu_n),$$
thus $\mu$ is another symmetric probability measure.  Then
$$ \sup_{x \in G} \mu_1 * \dots * \mu_n(\{x\}) \leq \mu^{*n}(\{0\}) \leq \tilde \mu_1 * \dots * \tilde \mu_n(\{0\}) $$
where 
$$ \tilde \mu_j := e^{-1/2} \delta + (1-e^{-1/2}) \mu_j * \mu_j.$$
\end{proposition}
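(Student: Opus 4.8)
The plan is to pass to the Fourier side. Equip $G$ with the discrete topology and let $\hat G$ denote its (compact) Pontryagin dual, equipped with the normalised Haar measure $d\xi$. For a symmetric discrete probability measure $\nu$ on $G$ set $\hat\nu(\xi) := \sum_{y \in G} \nu(\{y\}) \xi(y)$; then $\hat\nu$ is real-valued with $|\hat\nu| \le 1$ (the symmetry of $\nu$ making $\hat\nu$ real), convolution is sent to pointwise multiplication, and the inversion formula gives $\nu(\{x\}) = \int_{\hat G} \hat\nu(\xi) \overline{\xi(x)}\, d\xi$; in particular $\nu(\{0\}) = \int_{\hat G} \hat\nu(\xi)\, d\xi$. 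Applying this to $\mu$, and writing $s_j = s_j(\xi) := \hat\mu_j(\xi)^2 \in [0,1]$ and $\bar s := \frac{1}{n} \sum_{j=1}^n s_j \in [0,1]$, we get $\hat\mu(\xi) = \frac12 + \frac{1}{2n} \sum_{j=1}^n s_j = \frac{1 + \bar s}{2} \in [\tfrac12, 1]$, so that $\mu^{*n}(\{0\}) = \int_{\hat G} \bigl(\frac{1+\bar s}{2}\bigr)^n d\xi \ge 0$, and likewise $\tilde\mu_1 * \cdots * \tilde\mu_n(\{0\}) = \int_{\hat G} \prod_{j=1}^n \bigl(e^{-1/2} + (1 - e^{-1/2}) s_j\bigr)\, d\xi$ since $\hat{\tilde\mu}_j(\xi) = e^{-1/2} + (1-e^{-1/2}) s_j$.

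For the first inequality I would estimate, for any $x \in G$,
\[
0 \le \mu_1 * \cdots * \mu_n(\{x\}) = \Bigl| \int_{\hat G} \prod_{j=1}^n \hat\mu_j(\xi)\, \overline{\xi(x)}\, d\xi \Bigr| \le \int_{\hat G} \prod_{j=1}^n |\hat\mu_j(\xi)|\, d\xi = \int_{\hat G} \Bigl( \prod_{j=1}^n s_j \Bigr)^{1/2} d\xi ,
\]
and then use, pointwise in $\xi$, the two applications of AM--GM $\prod_{j=1}^n s_j \le \bar s^{\, n}$ and $\sqrt{\bar s} \le \frac{1+\bar s}{2}$ (the latter being $(1 - \sqrt{\bar s})^2 \ge 0$) to conclude $\bigl(\prod_j s_j\bigr)^{1/2} \le \bar s^{\, n/2} \le \bigl(\frac{1+\bar s}{2}\bigr)^n = \hat\mu(\xi)^n$. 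Integrating over $\hat G$ and taking the supremum over $x$ then gives $\sup_{x} \mu_1 * \cdots * \mu_n(\{x\}) \le \mu^{*n}(\{0\})$.

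For the second inequality the key is the pointwise bound $\hat\mu(\xi)^n \le \prod_{j=1}^n \hat{\tilde\mu}_j(\xi)$, which I would obtain from two elementary observations. First, the affine map $s \mapsto e^{-1/2} + (1 - e^{-1/2}) s$ is precisely the chord of the convex function $s \mapsto e^{(s-1)/2}$ joining its values at $s = 0$ and $s = 1$, hence dominates it on $[0,1]$; therefore $\prod_{j=1}^n \hat{\tilde\mu}_j(\xi) \ge \prod_{j=1}^n e^{(s_j - 1)/2} = e^{-n(1-\bar s)/2}$. Second, the inequality $1 - t \le e^{-t}$ applied with $t = \frac{1-\bar s}{2} \ge 0$ gives $\hat\mu(\xi)^n = \bigl(1 - \frac{1-\bar s}{2}\bigr)^n \le e^{-n(1-\bar s)/2}$. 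Combining these two bounds and integrating over $\hat G$ yields $\mu^{*n}(\{0\}) \le \tilde\mu_1 * \cdots * \tilde\mu_n(\{0\})$.

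The routine part is the first inequality, where everything reduces to AM--GM once one is on the Fourier side. The main subtlety lies in the second inequality: the tempting move of applying Jensen's inequality to the concave function $s \mapsto \log \hat{\tilde\mu}_j(\xi)$ would bound $\prod_j \hat{\tilde\mu}_j(\xi)$ from \emph{above}, which is the wrong direction. The right move is to use the convexity comparison to linearise the product $\prod_j \hat{\tilde\mu}_j(\xi)$ into a single exponential $e^{-n(1-\bar s)/2}$, and then separately dominate $\hat\mu(\xi)^n$ by that same exponential via $1 - t \le e^{-t}$; the constant $e^{-1/2}$ in the definition of $\tilde\mu_j$ is precisely what makes the chord comparison tight at $s = 0$, which is the regime ($s_j$ near $0$) where the bound is most delicate.
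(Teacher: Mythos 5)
Your proof is correct and follows essentially the same Fourier-analytic strategy as the paper's: bound the sup-concentration by $\int_{\hat G} \prod_j |\hat\mu_j|$, then compare pointwise via AM--GM and the elementary inequalities $|x| \le \frac12 + \frac12 x^2$, $x \le e^{-(1-x)}$, and the chord bound $e^{-t/2} \le 1 - (1-e^{-1/2})t$ on $[0,1]$. The only cosmetic differences are (a) you work directly with the Pontryagin dual of the discrete group $G$ and so skip the paper's preliminary reduction to finite $G$ via a limiting argument and Freiman isomorphism, and (b) you apply the two comparisons for the second inequality in the opposite order (chord bound on $\hat{\tilde\mu}_j$ first, then $1-t \le e^{-t}$ on $\hat\mu$), whereas the paper exponentiates $\hat\mu$ first and then applies the chord bound to each factor; these are rearrangements of the same computation.
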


\begin{proof}  By a limiting argument we may assume that $\mu_1,\dots,\mu_n$ are finitely supported, and then by Freiman isomorphism we may take $G$ to be finite.

Standard Fourier analysis shows that
$$ \sup_{x \in G} \mu_1 * \dots * \mu_n(\{x\})$$
is bounded by
$$ \E_{\xi \in \hat G} \prod_{j=1}^n |\hat \mu_j(\xi)|.$$
By the arithmetic mean-geometric mean inequality, we may bound this by
$$ \E_{\xi \in \hat G} (\frac{1}{n} \sum_{j=1}^n |\hat \mu_j(\xi)|)^n.$$
Bounding $|x| \leq \frac{1}{2} + \frac{1}{2} |x|^2$, this is bounded by
$$ \E_{\xi \in \hat G} (\frac{1}{2} + \frac{1}{2n} \sum_{j=1}^n |\hat \mu_j(\xi)|^2)^n$$
which equals
$$ \E_{\xi \in \hat G} \hat \mu(\xi)^n = \mu^{*n}(\{0\}).$$
Next, we bound $x \leq \exp( - (1-x) )$ for $x \leq 1$ to bound this by
$$ \E_{\xi \in \hat G} \exp(-\frac{1}{2n} \sum_{j=1}^n (1-|\hat \mu_j(\xi)|^2))^n$$
which can be rearranged as
$$ \E_{\xi \in \hat G} \prod_{j=1}^n \exp(-\frac{1}{2} (1-|\hat \mu_j(\xi)|^2)).$$
We can bound $\exp( - \frac{1}{2} x ) \leq 1 - (1-e^{-1/2}) x$ for $0 \leq x \leq 1$, so we may bound the above by
$$ \E_{\xi \in \hat G} \prod_{j=1}^n (e^{-1/2} + (1-e^{-1/2}) |\hat \mu_j(\xi)|^2)$$
which equals
$$ \E_{\xi \in \hat G} \prod_{j=1}^n \hat{\tilde \mu_j}(\xi)$$
which equals $\tilde \mu_1 * \dots * \tilde \mu_n(\{0\})$.  The claim follows.
\end{proof}

Now we can prove Theorem \ref{ilot}.  Let $G,n,v_1,\dots,v_n,\rho,\xi_1,\dots,\xi_n,A,n',\eps$ be as in that theorem.  We may assume that $n$ is sufficiently large depending on $\eps,A$, since the claim is trivial otherwise.  For $i=1,\dots,n$, let $\mu_i$ be the Bernoulli probability distribution that takes the values $v_i, -v_i$ with equal probability, then
$$
\rho = \sup_{x \in G} \mu_1 * \dots * \mu_n(\{x\}) $$
and thus by \eqref{donk} one has
$$ \mu^{*n}(\{0\}) \geq \rho \geq n^{-A}$$
and hence by Young's inequality
$$ \| \mu^{* \lfloor n/2\rfloor} \|_{\ell^2(G)}^2 \geq \rho \geq n^{-A}.$$
Applying Theorem \ref{gpg-mes} (with $\eps$ replaced by $\eps/2$, and using additive group notation), we can find a coset progression $H+P$ of rank at most $C_{A,\eps}$, and a finite subset $X$ of $G$ containing the origin of cardinality at most $C_{A,\eps}$, such that
\begin{equation}\label{hpe}
 |H+P| \leq C_{A,\eps} \rho^{-1}
\end{equation}
and such that
$$
\int_{G \backslash E} \| x\|_{H+P,X}^2\ d\mu(x) \leq \frac{C_{A,\eps}}{n}
$$
for some exceptional set $E$ with
$$
\mu(E) \leq \frac{C_{A,\eps}}{n^{1-\eps/2}}.
$$
Here $C_{A,\eps}$ is a quantity depending only on $A$ and $\eps$.

Since $G$ is assumed to be torsion-free, $H$ is trivial.  By Chebyshev's inequality, the hypotheses on $n'$, and the definition of $\mu$, we then have
$$  \| v_i \|_{P,X}^2 \leq \frac{C'_{A,\eps}}{n'}$$
for all but at most $n'$ of the $i \in \{1,\dots,n\}$ and some $C'_{A,\eps}$ depending on $A,\eps$.

Let $i$ be as above, and set $m := \lfloor (n')^{1/2} \rfloor$, then from \eqref{hps-def} there is a permutation $\sigma: X \to X$ such that
$$ \| v_i + x - \sigma(x) \|_P \leq \frac{C''_{A,\eps}}{m}$$
for some $C''_{A,\eps}$ depending on $A,\eps$ and all $x \in X$, which in particular implies upon iteration and telescoping that
$$ \{ v_i, 2v_i, \dots, m v_i\} \subset X + C''_{A,\eps} P.$$
By \cite[Theorem 1.10]{tao-vu}, we can find place $X + C''_{A,\eps} P$ inside an arithmetic progression
$$ P' = \{ n_1 w_1 + \dots + n_r w_r: |n_i| \leq N_i \forall i=1,\dots, r \}$$
of rank $r \leq C'''_{A,\eps}$ and size
$$ |P'| \leq C'''_{A,\eps}  / \rho $$
for some $C'''_{A,\eps}$ depending on $A,\eps$, which is $2$-proper in the sense that the $n_1 w_1 + \dots + n_r w_r$ with $|n_i| \leq 2N_i$ are all distinct.  Since
$$ \{ v_i, 2v_i, \dots, m v_i\} \subset P'$$
we see (by arguing by contradiction) that $v_i$ must lie in the progression
$$ \{ n_1 w_1 + \dots + n_r w_r: |n_i| \leq N_i / m \forall i=1,\dots, r \}.$$
If there are $r'$ choices of $N_i$ for which $N_i \geq m$, then we see that this progression has rank $r'$ and cardinality at most $C''''_{A,\eps} |P'| / m^{r'}$ for some $C''''_{A,\eps}$ depending on $A,\eps$.  Theorem \ref{ilot} follows.

\section{Littlewood-Offord type theorems}\label{app-sec}

In this section we prove Theorems \ref{mam}, \ref{mam-2}.

We first prove Theorem \ref{mam}.  Let $\mu$ be the uniform probability measure on $A_1,\dots,A_n,A_1^{-1},\dots,A_n^{-1}$, then $\mu$ is symmetric and
$$  \sup_{B \in G} \P( A'_1 \dots A'_n = B) = \| \mu^{*n} \|_{\ell^\infty(G)};$$
splitting $\mu^{*n} = \mu^{*n'} * \mu^{*n'} * \mu^{*i}$ with $n' = \lfloor n/2\rfloor$ and $i \in \{0,1\}$ and using Young's inequality, we conclude from \eqref{mod} that
$$ \| \mu^{*n'} \|_{\ell^2(G)}^2 \geq \frac{1}{\eps \sqrt{n}}$$
or equivalently
$$ \| \mu^{*n'} \|_{\ell^2(G)}^{-2} \leq \eps \sqrt{n}.$$
Applying Theorem \ref{gpg-mes} (with $d=1$, $n$ replaced $n'$, and $\eps$ replaced by (say) $1/2$), we obtain a coset nilprogression $HP$ of rank and nilpotency class at most $C$ in $C$-normal form, and a finite subset $X$ of $G$ of cardinality at most $C$ containing the identity, such that
\begin{equation}\label{hpn-mes-a}
 |HP| \leq C \eps \sqrt{n}
\end{equation}
and such that
\begin{equation}\label{sdt-mes-a}
\int_{G \backslash E} \| x\|_{HP,X}^2\ d\mu(x) \leq \frac{C}{n}
\end{equation}
for some exceptional set $E$ with
\begin{equation}\label{excep-a}
\mu(E) \leq \frac{C}{\sqrt{n}} \leq C^{1/2} \eps^2
\end{equation}
for some absolute constant $C$.  From \eqref{sdt-mes-a}, Chebyshev's inequality, and \eqref{excep-a}, we see that we have
$$ \| x \|_{HP,X} < \frac{1}{C \eps \sqrt{n}}$$
for all $x$ outside of a set of $\mu$-measure at most $C^{1/2} \eps^2 + C^3 \eps^2$, thus
$$ \| A_i \|_{HP,X} <\frac{1}{C \eps \sqrt{n}}$$
for all but at most $C' \eps^2 n$ choices of $i=1,\dots,n$, for some absolute constant $C'$.  For each such $i$, we see from \eqref{hps-def} that there exists a permutation $\sigma_i: X \to X$ such that
$$
\| \sigma(x)^{-1} A_i x \|_{HP} < \frac{1}{C \eps \sqrt{n}}$$
for all $x \in X$.  Since $|P| \leq |HP| \leq C \eps \sqrt{n}$, we conclude that
$$ \sigma(x)^{-1} A_i x \in H$$
for all $x \in X$.  Thus $A_i$ lives in the group $H' := \{ g \in G: gXH = XH \}$, which is a group of order at most $|X| |H| \leq C^2 \eps \sqrt{n}$.  The claim follows.

Now we prove Theorem \ref{mam-2}.  By reducing $n$ by one if necessary (and adjusting $\eps$ slightly), we may assume that $n$ is even, then by Young's inequality
$$ \| \mu^{*n/2} \|_{\ell^2(G)}^{-2} \leq n^{d+1-\eps}$$
and so by Theorem \ref{gpg-mes} as in the proof of Theorem \ref{mam}, we can find a coset nilprogression $HP$ in $C_{d,\eps}$-normal form of rank and nilpotency class at most $C_{d,\eps}$ with
\begin{equation}\label{happy}
 |HP| \leq C_{d,\eps} n^{d+1-\eps}
\end{equation}
and a set $X \subset G$ of cardinality at most $C_{d,\eps}$ such that the set
$$ F := \{ g \in G: \|g\|_{HP,X} \leq \frac{C_{d,\eps}}{\sqrt{n}} \}  $$
has $\mu$-measure at least $1-\eps$, where $C_{d,\eps}$ depends only on $d,\eps$.  

Write $HP/H = P(v_1,\dots,v_r; N_1,\dots,N_r)$.  We may delete any generator $v_i$ with $N_i < C_{d,\eps}^{-1} \sqrt{n}$ (without significantly affecting the $C_{d,\eps}$-normal form), since this does not alter $F$.  If $g \in F$, then by \eqref{hps-def} there is a permutation $\sigma: X \to X$ such that $\sigma(x)^{-1} g x \in \langle HP \rangle$ for all $x \in X$, so that $g X\langle HP \rangle = X\langle HP \rangle$.  The group $\{ g: g X \langle HP \rangle = X \langle HP \rangle \}$ contains $\langle HP \rangle$ as a finite index subgroup (since this is a stabiliser of the action of this group on the finite space $X \langle HP \rangle / \langle HP \rangle$), so it will suffice to show that $\langle HP \rangle$ has growth at most $d$.  Quotienting by $H$, it suffices to show that $v_1,\dots,v_r$ generate a group of growth at most $d$.

Let $K = \langle HP \rangle/H$ be the group generated by the $v_1,\dots,v_r$.  This is a nilpotent group.
Inductively using the upper-triangular property of nilprogressions in $C$-normal form, we see that the $j^{th}$ term $K_j$ in the lower central series of $K$ is generated by a subset of $\{ v_i: N_i \geq C_{j,d,\eps}^{-1} n^{j/2} \}$ for some $C_{j,d,\eps}$ depending on $j,d,\eps$.  By the Bass-Guivarc'h formula \cite{bass, guivarch}, the order $D$ of growth of $K$ is then at most
$$ \sum_{j \geq 1} \# \{ v_i: N_i \geq C_{j,d,\eps}^{-1} n^{j/2} \} $$
and hence
\begin{align*}
 \prod_{i=1}^r N_i & \geq (C'_{d,\eps})^{-1} \prod_{j \geq 1} \prod_{1 \leq i \leq r: N_i \geq C_{j,d,\eps}^{-1} n^{j/2} } n^{1/2} \\ 
& \geq (C''_{d,\eps})^{-1} n^D 
\end{align*}
for some $C'_{d,\eps}, C''_{d,\eps}$ depending on $d,\eps$.  On the other hand, from the volume bound for nilprogressions in normal form and \eqref{happy} we have
$$ \prod_{i=1}^r N_i \leq C'''_{d,\eps} n^{d+1-\eps}$$
for some $C'''_{d,\eps}$ depending on $d,\eps$.  For $n$ large enough, we thus have $D \leq d$, and the claim follows.

\end{document}